\theoremstyle{plain}
\newtheorem*{theorem}{Theorem}
\newtheorem*{proposition}{Proposition}
\newtheorem*{lemma}{Lemma}
\newtheorem*{corollary}{Corollary}
\theoremstyle{definition}
\newtheorem*{definition}{Definition}
\newtheorem*{example}{Example}
\newtheorem*{remark}{Remark}
\renewcommand{\bar}[1]{\overline{#1}}
\newcommand{\cat}[1]{\mathscr{#1}}
\renewcommand{\hat}[1]{\widehat{#1}}
\renewcommand{\tilde}[1]{\widetilde{#1}}
\renewcommand{\vec}[1]{\bm{#1}}
\newcommand{\bC}{\mathbb{C}}
\newcommand{\bE}{\mathbb{E}}
\newcommand{\bH}{\mathbb{H}}
\newcommand{\bP}{\mathbb{P}}
\newcommand{\bQ}{\mathbb{Q}}
\newcommand{\bZ}{\mathbb{Z}}
\newcommand{\cE}{\mathcal{E}}
\newcommand{\cF}{\mathcal{F}}
\newcommand{\cK}{\mathcal{K}}
\newcommand{\cL}{\mathcal{L}}
\newcommand{\cN}{\mathcal{N}}
\newcommand{\cO}{\mathcal{O}}
\newcommand{\cT}{\mathcal{T}}
\newcommand{\fM}{\mathfrak{M}}
\newcommand{\fN}{\mathfrak{N}}
\newcommand{\sA}{\mathsf{A}}
\newcommand{\sE}{\mathsf{E}}
\newcommand{\sG}{\mathsf{G}}
\newcommand{\sS}{\mathsf{S}}
\newcommand{\sT}{\mathsf{T}}
\newcommand{\loc}{\mathrm{loc}}
\newcommand{\sst}{\mathrm{sst}}
\newcommand{\pt}{\mathrm{pt}}
\newcommand{\vir}{\mathrm{vir}}
\DeclareMathOperator{\Bl}{Bl}
\DeclareMathOperator{\Char}{char}
\DeclareMathOperator{\cochar}{cochar}
\DeclareMathOperator{\ch}{ch}
\DeclareMathOperator{\DT}{\mathsf{DT}}
\DeclareMathOperator{\Ell}{Ell}
\DeclareMathOperator{\Ext}{Ext}
\DeclareMathOperator{\ext}{ext}
\DeclareMathOperator{\Fr}{Fr}
\DeclareMathOperator{\GL}{GL}
\DeclareMathOperator{\Hom}{Hom}
\DeclareMathOperator{\im}{im}
\DeclareMathOperator{\Jac}{Jac}
\DeclareMathOperator{\Pic}{Pic}
\DeclareMathOperator{\Poles}{\mathsf{Poles}}
\DeclareMathOperator{\rank}{rank}
\DeclareMathOperator{\res}{\mathsf{res}}
\DeclareMathOperator{\Res}{Res}
\DeclareMathOperator{\SU}{SU}
\DeclareMathOperator{\supp}{supp}
\DeclareMathOperator{\Spec}{Spec}
\DeclareMathOperator{\Split}{Split}
\DeclareMathOperator{\tot}{tot}
\DeclareMathOperator{\tr}{tr}
\DeclareMathOperator{\VW}{\mathsf{VW}}
\DeclareMathOperator{\wt}{wt}
\DeclarePairedDelimiter{\inner}{\langle}{\rangle}
\DeclarePairedDelimiterX{\pseries}[1]{[}{]}{\mkern-2mu\delimsize[#1\delimsize]\mkern-2mu}
\title{Invariance of elliptic genus under wall-crossing}
\author{Henry Liu}
\date{\today}
\begin{document}

\maketitle

\begin{abstract}
  Wall-crossing formulas for various flavors of elliptic genus can be
  obtained using master spaces. We give a topological criterion which
  implies that such wall-crossing formulas are trivial. Applications
  are given for: GIT quotients, following Thaddeus; moduli of sheaves,
  following Mochizuki; Donaldson--Thomas and Vafa--Witten
  theory, following Joyce and Tanaka--Thomas respectively.
\end{abstract}

\tableofcontents

\section{Introduction}

\subsection{}

Let $X$ be a smooth proper scheme over $\bC$. Recall the
$q$-Pochhammer and (normalized) odd Jacobi theta functions
\[ \phi(z) \coloneqq \prod_{n > 0} (1 - q^n z), \qquad \vartheta(z) \coloneqq (1 - z^{-1}) \phi(qz) \phi(qz^{-1}), \]
respectively. Both can be extended to functions of vector bundles on
$X$ as
\[ \Phi(\cE) \coloneqq \prod_{\cL \in \cE} \phi(\cL), \qquad \Theta(\cE) \coloneqq \prod_{\cL \in \cE} \vartheta(\cL), \]
where the products range over the Chern roots $\cL$ of $\cE$. Then,
following \cite{Krichever1990,Hoehn1991}, the {\it elliptic genus} of
$X$ is
\begin{equation} \label{eq:elliptic-genus}
  \sE_{-y}(X) \coloneqq \chi\left(X, \frac{\Theta(y\cT_X)}{\Phi(\cT_X) \Phi(\cT_X^\vee)}\right).
\end{equation}
where $\cT_X$ is the tangent bundle and $y$ is a formal variable. If
$\sG$ is a group of automorphisms of $X$, then
\eqref{eq:elliptic-genus} is naturally an element
\[ \sE_{-y}(X) \in K_{\sG}(\pt)[y^{\pm 1}]\pseries*{q}, \]
where, for a scheme $Z$ with $\sG$-action, $K_\sG(Z)$ denotes the
$\sG$-equivariant K-theory of $Z$, i.e. the Grothendieck group of
$\sG$-equivariant coherent sheaves on $Z$.

Note that the $q=0$ specialization of $\sE_{-y}(X)$ is the Hirzebruch
$\chi_{-y}$ genus of $X$.

\subsection{}

To give a sense of what elliptic genus looks like, consider the simple
case where a torus $\sT = (\bC^\times)^r$ acts on $X$ and the fixed
point set $X^\sT$ consists only of isolated points. Then, by
torus-equivariant localization \cite{Thomason1992},
\begin{equation} \label{eq:elliptic-genus-localized}
  \sE_{-y}(X) = \sum_{p \in X^\sT} \prod_{w \in T_pX} \frac{\vartheta(yw)}{\vartheta(w)}
\end{equation}
where the product ranges over the $\sT$-weights of $T_pX$. Indeed, the
definition \eqref{eq:elliptic-genus} is purposefully designed so that
\eqref{eq:elliptic-genus-localized} holds.

More generally, let $\cN_{X^\sT \subset X}$ be the normal bundle of
$X^\sT \subset X$. Then localization, combined with the
multiplicativity of $\Theta$ and $\Phi$, produces
\begin{equation} \label{eq:elliptic-genus-localized-general}
  \sE_{-y}(X) = \chi\left(X^\sT, \frac{\Theta(y \cN_{X^\sT \subset X})}{\Theta(\cN_{X^\sT \subset X})} \frac{\Theta(y\cT_{X^\sT})}{ \Phi(\cT_{X^\sT}) \Phi(\cT_{X^\sT}^\vee) } \right).
\end{equation}
Note that only the first factor in the integrand has non-trivial
$\sT$-dependence.

\subsection{}
\label{sec:elliptic-genus-is-elliptic}

It turns out $\sE_{-y}(X)$ has better properties when it is valued in
equivariant {\it elliptic cohomology}, rather than equivariant
K-theory as written in \eqref{eq:elliptic-genus}. For a torus $\sT =
(\bC^\times)^r$, this means being a section of a line bundle on the
elliptic cohomology scheme \cite{Grojnowski2007}
\[ \Ell_{\sT}(\pt) \coloneqq \sT/q^{\cochar \sT} \cong (\bC^\times/q^\bZ)^r, \]
rather than merely on the K-theory scheme $\Spec K_\sT(\pt) = \sT$.
Here $q = \exp(2\pi i\tau)$ where $\tau$ is the elliptic modulus,
$\bC^\times/q^{\bZ}$ is the Tate elliptic curve, and $\cochar \sT$ and
$\Char \sT$ denote the cocharacter and character lattices of $\sT$.

Given a line bundle $\cL$ on $X^\sT$, let $\wt_\sT(\cL) \in \Char \sT$
correspond to its $\sT$-weight. Using
\eqref{eq:elliptic-genus-localized-general} and the basic
$q$-difference equation $\vartheta(qz) = -(qz)^{-1} \vartheta(z)$, one
can check that if
\begin{equation} \label{eq:elliptic-condition}
  y^{-\sum_{\cL \in \cN_{X^\sT \subset X}} \inner{\sigma, \wt_\sT(\cL)}} \text{ is constant on } X^\sT
\end{equation}
for any cocharacter $\sigma \in \cochar \sT$, then $\sE_{-y}(X)$
satisfies a constant-coefficient $q$-difference equation and therefore
descends from $\sT$ to $\sT/q^{\cochar\sT}$.

\subsection{}
\label{sec:y-specialization}

The simplest way to satisfy \eqref{eq:elliptic-condition} is to assume
that $\det \cT_X$ is an $N$-th power, so that $\sum_\cL \wt_\sT(\cL)$
is a multiple of $N$, and to specialize $y$ to an $N$-th root of unity
$\zeta_N \neq 1$. Thus the following two cases will feature
prominently in this paper:
\begin{enumerate}
\item ($X$ is spin) $y = -1$ and the canonical bundle
  $\cK_X$ admits a square root;\label{it:spin-condition}
\item ($X$ is Calabi--Yau) $y$ is arbitrary and $\cK_X =
  \cO_X$.\label{it:calabi-yau-condition}
\end{enumerate}
The $y=-1$ specialization $\sE_1(X)$ is particularly notable because
it is the historically-earlier notion of elliptic genus due to
Landweber--Stong, Ochanine, and Witten \cite{Landweber1988, Witten1988}.

\subsection{}
\label{sec:wall-crossing-setup}

In this paper, we consider the following geometric setup for studying
(equivariant) wall-crossing problems: $\sT$ is a torus and $M$ is a
{\it $\sT$-equivariant master space}. This means that $M$ is a smooth
proper scheme\footnote{More generally, $M$ can be a smooth
  Deligne--Mumford stack satisfying the weaker properness condition of
  Remark~\ref{rem:master-space-proper}. For elliptic genus, the
  $\sS$-fixed components $Z_\pm$ are still required to be
  schemes. \label{footnote:properness}} with a $(\sT \times
\sS)$-action, where $\sS \coloneqq \bC^\times$ with coordinate denoted
$s$, such that the $\sS$-fixed locus is a disjoint union of the
following $\sT$-invariant pieces:
\begin{enumerate}
\item smooth divisors $\iota_\pm\colon Z_\pm \hookrightarrow M$ with
  normal bundles of $\sS$-weights $s^{\pm 1}$
  respectively;\label{it:master-space-easy-locus}
\item other proper component(s) $\iota_0\colon Z_0 \hookrightarrow M$
  whose normal sheaf $\cN_{\iota_0}$ is locally free;
\end{enumerate}
Generally, in applications, $Z_\pm$ will be two different stable loci
in an ambient algebraic stack.

Many master spaces exist in the literature; examples include
\cite{Thaddeus1996, Okonek1999, Mochizuki2009, Joyce2021,
  Gonzalez2022, Zhou2022}. In many of these, $M$ is only {\it
  quasi-smooth} instead of smooth (see
Remark~\ref{rem:master-space-smooth}).

\subsection{}

\begin{theorem}[Main theorem] \label{thm:invariance}
  Suppose $\cN_{\iota_0}\big|_{Z_0^\sT} = \cE_+ \oplus \cE_-$ only has
  pieces of $\sS$-weight $s^{\pm 1}$, and
  \begin{equation} \label{eq:wall-crossing-rank-condition}
    \rank \cE_+ \equiv \rank \cE_- \bmod{N}
  \end{equation}
  for some integer $N > 0$. Then, for any $N$-th root of
  unity $\zeta_N \neq 1$,
  \[ \sE_{-\zeta_N}(Z_+) = \sE_{-\zeta_N}(Z_-). \]
\end{theorem}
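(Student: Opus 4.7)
My plan is to apply $\sS$-equivariant localization to $\sE_{-y}(M)$, use the $q$-difference equation $\vartheta(qz) = -(qz)^{-1}\vartheta(z)$ together with the rank hypothesis to establish that $\sE_{-\zeta_N}(M)$ is $s$-independent, and then extract the wall-crossing equality by comparing the localization formula in two asymptotic regimes of $s$.

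First, the localization formula \eqref{eq:elliptic-genus-localized-general} for the $\sS$-action decomposes
\[ \sE_{-y}(M) = G_+(s) + G_-(s) + G_0(s), \]
where $G_\pm(s)$ is the contribution from $Z_\pm$ --- containing an extra factor $\vartheta(y\cN_{Z_\pm/M})/\vartheta(\cN_{Z_\pm/M})$ of $\sS$-weight $s^{\pm 1}$ --- and $G_0(s)$ is the contribution from $Z_0$, which carries analogous factors from $\cE_+ \oplus \cE_-$ of $\sS$-weights $s^{\pm 1}$. The $q$-difference equation immediately gives the transformations $G_\pm(qs) = y^{\mp 1} G_\pm(s)$ and $G_0(qs) = y^{\rank \cE_- - \rank \cE_+} G_0(s)$ under $s \mapsto qs$. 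Specializing $y = \zeta_N$ and invoking \eqref{eq:wall-crossing-rank-condition} makes $G_0$ $q$-periodic in $s$, and iterating $N$ times gives $\sE_{-\zeta_N}(M)(q^N s) = \sE_{-\zeta_N}(M)(s)$. Since $M$ is proper, $\sE_{-\zeta_N}(M)$ is a Laurent polynomial in $s$; the $q^N$-periodicity then forces it to be $s$-independent, because any monomial $s^k$ with $k \neq 0$ transforms by the unit $q^{Nk} \neq 1$ in the integral domain $\bZ((q))$.

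Finally, I would test this $s$-independent constant against the asymptotic regimes $s \to 0$ and $s \to \infty$ of the localization formula. At $q = 0$, the ratios $\vartheta(yz)/\vartheta(z)$ have the clean limits $y^{-1}$ as $z \to 0$ and $1$ as $z \to \infty$; applying these to $G_+ + G_- + G_0$ yields two expressions for $\sE_{-\zeta_N}(M)|_{q = 0}$ that differ only by a swap of the $(Z_+, Z_-, Z_0)$-rank weights. Subtracting them eliminates the $Z_0$ terms via the rank condition $\zeta_N^{\rank \cE_+} = \zeta_N^{\rank \cE_-}$ and leaves $(\zeta_N^{-1} - 1)\bigl[\sE_{-\zeta_N}(Z_+) - \sE_{-\zeta_N}(Z_-)\bigr]\big|_{q = 0} = 0$, proving the statement at $q = 0$. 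The main obstacle is to propagate this to all orders in $q$: the ratios $\vartheta(ysL)/\vartheta(sL)$ acquire $s$-divergent corrections at higher $q$-order, so the naive $s \to 0, \infty$ limits fail. The plan for this step is to reinterpret the rigid $\sE_{-\zeta_N}(M)$ as a holomorphic section on the elliptic curve $\bC^\times/q^N \bZ$ to which it descends, and to extract $\sE_{-\zeta_N}(Z_\pm)$ via a residue computation on this curve in which the rank condition again kills the $Z_0$ residues.
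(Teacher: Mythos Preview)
Your $q=0$ argument via the limits $s\to 0,\infty$ is correct and gives a clean proof of the Hirzebruch-genus case. The constancy-in-$s$ observation is also correct, though it is stronger than what is needed: properness alone already guarantees that $\sE_{-y}(M)$ is a Laurent polynomial in $s$, hence pole-free near $|s|\approx 1$, and this is all the argument ever uses.

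The genuine gap is in your plan for general $q$. On the curve $\bC^\times/q^{N\bZ}$ to which you descend, the global residue theorem yields no information. Since $G_+(qs)=\zeta_N^{-1}G_+(s)$, the residues of $G_+\,ds/s$ at the $N$ translates of the pole $s\cL_+=1$ are $r_+,\,\zeta_N^{-1}r_+,\ldots,\zeta_N^{-(N-1)}r_+$, and these sum to $\bigl(\sum_k \zeta_N^{-k}\bigr)r_+=0$; the same happens for $G_-$, and for $G_0$ by the rank condition. The residue theorem reads $0=0+0+0$ and you cannot isolate $\sE_{-\zeta_N}(Z_\pm)$. The paper avoids this by \emph{not} descending $G_\pm$ to any elliptic curve: one applies the contour integral $\res f\coloneqq\oint_{|s|\approx 1} f\,ds/s$ directly on $\bC^\times$, where properness kills the left-hand side and $G_\pm$ each contribute a \emph{single} residue proportional to $\sE_{-y}(Z_\pm)$. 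The elliptic curve enters only for the $G_0$ term: the rank condition makes the integrand $\Theta(y\cN_{\iota_0})/\Theta(\cN_{\iota_0})$ genuinely $q$-periodic (not merely $q^N$-periodic), so that contour integral descends to $\bC^\times/q^{\bZ}$, where the contour now encloses all poles and hence vanishes by Cauchy's theorem. The passage from $q^N$-periodicity to $q$-periodicity is exactly where the rank hypothesis does its work, and your framing on $\bC^\times/q^{N\bZ}$ loses precisely this leverage.
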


To be clear, $Z_0^\sT$ may have many connected components. While
$\rank \cE_\pm$ need not be constant on $Z_0^\sT$,
\eqref{eq:wall-crossing-rank-condition} must hold on each connected
component.

Note that if $\rank \cE_+ = \rank \cE_-$, i.e. the theorem holds for
all $N > 0$, then $\sE_{-y}(Z_+) = \sE_{-y}(Z_-)$ for general $y$
because $q$-coefficients of $\sE_{-y}$ are Laurent polynomials in $y$.

\subsection{}
\label{sec:rigidity-strategy}

The proof of Theorem~\ref{thm:invariance}, given in
\S\ref{sec:wall-crossing}, is fairly elementary. A very general
wall-crossing procedure, arising from $(\sT \times S)$-equivariant
localization on $M$, expresses the difference between
$\sE_{-y}(Z_+)$ and $\sE_{-y}(Z_-)$ in terms of a contour
integral
\begin{equation} \label{eq:interaction-term-as-contour-integral}
  \oint_{|s|\approx 1} \frac{\Theta(y(\cE_+ + \cE_-))}{\Theta(\cE_+ + \cE_-)} \,\frac{ds}{s}
\end{equation}
on $\sS$. The integrand is a function on $\sS$ in general, but
specializing $y = \zeta_N$ makes it $q^{\cochar\sS}$-invariant, so the
contour integral descends to the elliptic curve $\sS/q^{\cochar \sS}$.
There, the contour encloses all the poles of the integrand, and
therefore vanishes by Cauchy's residue theorem.

\subsection{}

Previous work studying elliptic genus under birational transformations
focused on the Calabi--Yau setting of \ref{it:calabi-yau-condition},
and the {\it non-equivariant} (i.e. $\sT$ is trivial) elliptic genus
\[ \sE_{-y}\colon M\SU_* \otimes \bQ \to \Jac \]
viewed as a homomorphism from the $\SU$-bordism ring to Jacobi forms
in $(y, \tau)$. A bordism argument by Totaro \cite[\S 4]{Totaro2000}
shows that $\sE_{-y}$ is invariant under certain Calabi--Yau flops,
using Krichever--H\"ohn's {\it elliptic rigidity}
\cite{Krichever1990,Hoehn1991}. More sophisticated work by Borisov and
Libgober extends this to arbitrary crepant birational transformations
\cite{Borisov2003}.

It is possible that an equivariant version of such arguments can be
used to prove Theorem~\ref{thm:invariance}, as explained below. But
our approach outlined in \S\ref{sec:rigidity-strategy} is more
versatile; for instance, it applies equally well to virtual chiral
elliptic genus (see \S\ref{sec:virtual-chiral-intro}).

\subsection{}

In fact, a simple geometric formula for the contour integral
\eqref{eq:interaction-term-as-contour-integral} exists in general. In
\S\ref{sec:residue-geometric}, we explain how Jeffrey--Kirwan
integration expresses contour integrals of this shape as
equivariant Euler characteristics on GIT quotients --- in our case
(Proposition~\ref{prop:interaction-residue-as-integral}), as
\begin{equation} \label{eq:interaction-residue-as-euler-characteristic}
  \chi\left(\bP(V), \frac{\Theta(y \cO(1) \otimes V_+ + y^{-1} \cO(1) \otimes V_-)}{\Phi(\cO(1) \otimes V) \Phi(\cO(-1) \otimes V^\vee)}\right)
\end{equation}
where $V \coloneqq V_+ \oplus V_-$ and coordinates of $V_\pm$
correspond to Chern roots of $\cE_\pm$. A slightly different
application of Jeffrey--Kirwan integration expresses
\eqref{eq:interaction-term-as-contour-integral} as the change in
$\sE_{-y}$ across certain toric flips (Remark~\ref{rem:rigidity}). The
invariance of $\sE_{-\zeta_N}$ under these toric flips is equivalent
to our main Theorem~\ref{thm:invariance}.

Unfortunately for wall-crossing, in general
\eqref{eq:interaction-residue-as-euler-characteristic} appears to be a
non-trivial function of the Chern roots of $\cE_\pm$, with no
productive closed form.

\subsection{}

We give two direct applications of Theorem~\ref{thm:invariance}: to
the original Thaddeus master space, studying variation of GIT
(\S\ref{sec:GIT-quotients}), and to Mochizuki's {\it enhanced} master
space, studying moduli of sheaves (\S\ref{sec:moduli-of-sheaves}). In
each, most of the work is to identify sufficiently explicit and useful
criteria such that the topological condition
\eqref{eq:wall-crossing-rank-condition} holds. As such, the following
two theorems describe certain ``nice'' cases which may be less general
than permitted by their constituent pieces.

\subsection{}
\label{sec:applications-GIT}

\begin{theorem}[Theorem~\ref{thm:VGIT-invariance}, Theorem~\ref{thm:VGIT-smooth-exceptional-loci}] \label{thm:elliptic-genus-VGIT}
  Let $X \sslash_{\cL_\pm} \sG$ be two smooth GIT quotients separated
  by a single, simple (\S\ref{sec:VGIT-smooth-assumptions}) wall
  $\cL_0$ in the space of GIT stability conditions, and let
  $X^{\sst}(\cL) \subset X$ denote the $\cL$-semistable locus. The
  natural maps
  \[ (X^{\sst}(\cL_0) \setminus X^{\sst}(\cL_\mp)) \sslash_{\cL_\pm} \sG \to (X^{\sst}(\cL_0) \setminus (X^{\sst}(\cL_+) \cup X^{\sst}(\cL_-))) \sslash_{\cL_0} \sG \]
  are always locally trivial fibrations by weighted projective spaces.
  If in fact they are locally trivial $\bP^{N_\pm}$-fibrations with
  $N_+ - N_- \equiv 0 \bmod{N}$, then
  \[ \sE_{-\zeta_N}(X \sslash_{\cL_+} \sG) = \sE_{-\zeta_N}(X \sslash_{\cL_-} \sG). \]
\end{theorem}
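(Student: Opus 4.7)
The plan is to construct a Thaddeus-type master space $M$ for this variation of GIT problem and then apply Theorem~\ref{thm:invariance}. Concretely, take $M$ to be the GIT quotient of $X \times \bP^1$ by $\sG$ with the linearization interpolating between $\cL_+$ and $\cL_-$ (tensored with an appropriate power of $\cO(1)$ on $\bP^1$), equipped with the residual $\sS = \bC^\times$-action coming from the standard action on $\bP^1$. Under the simple wall hypothesis of \S\ref{sec:VGIT-smooth-assumptions}, $M$ is a smooth proper DM stack satisfying the properness condition of footnote~\ref{footnote:properness}, and the two $\sS$-fixed divisors are precisely $\iota_\pm\colon Z_\pm = X \sslash_{\cL_\pm} \sG \hookrightarrow M$ with normal bundles of $\sS$-weight $s^{\pm 1}$ as required.

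Next I would identify the central $\sS$-fixed locus $Z_0 \subset M$ together with the weight decomposition of its normal sheaf. By the Luna slice theorem applied at strictly $\cL_0$-semistable orbits (those with a $\bC^\times$-stabilizer), $Z_0$ is identified with the middle quotient $(X^{\sst}(\cL_0) \setminus (X^{\sst}(\cL_+) \cup X^{\sst}(\cL_-))) \sslash_{\cL_0} \sG$, and the normal bundle $\cN_{\iota_0}$ splits as $\cE_+ \oplus \cE_-$ into positive and negative weight pieces of the stabilizer representation, which are exactly the $\sS$-weights $s^{\pm 1}$. Under this identification, the two natural maps in the theorem are precisely the projectivizations $\bP(\cE_\pm) \to Z_0$, which are \emph{weighted} projective fibrations in general because the stabilizer weights along different orbits need not all equal $\pm 1$. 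This proves the first assertion.

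For the second assertion, assume the maps are genuine $\bP^{N_\pm}$-fibrations, which means all stabilizer weights equal $\pm 1$ and in particular $\cE_\pm$ is locally free of constant rank $N_\pm + 1$ with pure $\sS$-weight $s^{\pm 1}$. The hypothesis $N_+ - N_- \equiv 0 \bmod N$ then gives
\[ \rank \cE_+ - \rank \cE_- = (N_+ + 1) - (N_- + 1) = N_+ - N_- \equiv 0 \bmod N, \]
which is exactly the rank condition \eqref{eq:wall-crossing-rank-condition} of Theorem~\ref{thm:invariance}. The hypotheses of that theorem are now all verified, so applying it yields $\sE_{-\zeta_N}(Z_+) = \sE_{-\zeta_N}(Z_-)$ as desired.

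The main technical hurdle is the explicit local description of $M$ via the Luna slice theorem, which is needed both to verify smoothness of $M$ under the simple wall hypothesis and to pin down $Z_0$ together with the $\sS$-weight decomposition of $\cN_{\iota_0}$. A minor subtlety concerns auxiliary torus equivariance: if a background torus $\sT$ acts on $X$ commuting with $\sG$, then $\sT$ acts on $M$ commuting with $\sS$, and the $\bP^{N_\pm}$-fibration hypothesis ensures that $\cE_\pm|_{Z_0^\sT}$ has only $\sS$-weights $s^{\pm 1}$, so the full hypothesis of Theorem~\ref{thm:invariance} (which requires this only on $Z_0^\sT$) is still satisfied.
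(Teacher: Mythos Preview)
Your proposal is correct and follows essentially the same route as the paper: build the Thaddeus master space, identify $Z_0$ with $X^0 \sslash_0 \sG$ and $\cN_{\iota_0}$ with $\cN_{X^0 \subset X}$, read off that the exceptional-locus fibrations are weighted projective on the $\sS$-weight pieces $\cE_\pm$, and then feed the rank condition into Theorem~\ref{thm:invariance}. The paper differs only cosmetically: it uses Thaddeus's original presentation $M = \bP(\cL_+ \oplus \cL_-) \sslash_{\cO(1)} \sG$ rather than your $X \times \bP^1$ variant, and invokes Bialynicki--Birula (via \cite[Theorem~4.7]{Thaddeus1996}) rather than the Luna slice theorem to identify the weight pieces of the normal bundle.
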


\subsection{}
\label{sec:applications-moduli-of-sheaves}

\begin{theorem}[Theorem~\ref{thm:sheaves-simple-wall}, Lemma~\ref{lem:surface-Ext-parity}, Theorem~\ref{thm:sheaves-wall}, Corollary~\ref{cor:sheaves-wall-spin}] \label{thm:moduli-of-sheaves-invariance}
  Let $Y$ be a smooth projective variety with canonical bundle
  $\cK_Y$. Under the assumptions of \S\ref{sec:moduli-wall-crossing}
  and \S\ref{sec:moduli-smooth-and-proper}, consider two stable loci
  $\fM_\alpha^\sst(\pm) \subset \fM_\alpha^\sst(0)$ in a moduli stack
  of sheaves of class $\alpha \in H^*(Y)$ on $Y$, separated by a
  single wall at $0$.
  \begin{itemize}
  \item (Spin) If $\cK_Y$ admits a square root, then
    \[ \sE_1(\fM_\alpha^\sst(+)) = \sE_1(\fM_\alpha^\sst(-)). \]
  \item (Calabi--Yau) If $\cK_Y^{\otimes k} = \cO_Y$ for some integer
    $k$, and the wall is simple (see \ref{it:simple-wall}), then
    \[ \sE_{-y}(\fM_\alpha^\sst(+)) = \sE_{-y}(\fM_\alpha^\sst(-)). \]
  \end{itemize}
\end{theorem}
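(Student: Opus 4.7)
The plan is to apply the main Theorem~\ref{thm:invariance} to Mochizuki's enhanced master space $M$ for the wall separating $\fM_\alpha^\sst(+)$ and $\fM_\alpha^\sst(-)$. Under the hypotheses of \S\ref{sec:moduli-wall-crossing}--\S\ref{sec:moduli-smooth-and-proper}, $M$ will be smooth and proper in the sense of footnote~\ref{footnote:properness}, and its $\sS$-fixed loci will consist of two smooth divisors $Z_\pm$ recovering $\fM_\alpha^\sst(\pm)$, together with a locus $Z_0$ parametrizing $S$-equivalence classes $\mathcal{F} = \mathcal{F}_1 \oplus \mathcal{F}_2$ of strictly semistable sheaves on the wall. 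The entire theorem then reduces to verifying the divisibility \eqref{eq:wall-crossing-rank-condition} on each component of $Z_0^\sT$.

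Deformation theory at such a fixed point identifies the weight-$s^{\pm 1}$ pieces of the normal bundle as $\cE_+ \cong \Ext^1(\mathcal{F}_{1}, \mathcal{F}_{2})$ and $\cE_- \cong \Ext^1(\mathcal{F}_{2}, \mathcal{F}_{1})$ (up to pieces absorbed into $\cT_{Z_0}$). So the question becomes: when do these two Ext groups have equal dimension modulo $N$?

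For the Calabi--Yau case, assume $\cK_Y^{\otimes k} = \cO_Y$. Serre duality gives
\[ \Ext^i(\mathcal{F}_1, \mathcal{F}_2)^\vee \cong \Ext^{\dim Y - i}(\mathcal{F}_2, \mathcal{F}_1 \otimes \cK_Y), \]
and since $\cK_Y$ is torsion, the alternating sums on the two sides already match. The simple-wall hypothesis \ref{it:simple-wall} cuts down the relevant Ext groups so that only $\Ext^1$ contributes to $\cE_\pm$, upgrading the Euler-characteristic equality to the honest identity $\rank \cE_+ = \rank \cE_-$; this is the content of Theorem~\ref{thm:sheaves-wall}. Thus \eqref{eq:wall-crossing-rank-condition} holds for every $N$, and the remark following the statement of Theorem~\ref{thm:invariance} then upgrades the resulting identity at all roots of unity to an identity in general $y$.

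For the spin case, the crucial input is Lemma~\ref{lem:surface-Ext-parity}: when $Y$ is a surface and $\cK_Y$ admits a square root, $\ext^1(\mathcal{F}_1, \mathcal{F}_2) \equiv \ext^1(\mathcal{F}_2, \mathcal{F}_1) \pmod 2$. I would prove this by Hirzebruch--Riemann--Roch, writing the difference $\chi(\mathcal{F}_1, \mathcal{F}_2) - \chi(\mathcal{F}_2, \mathcal{F}_1)$ as a topological polynomial whose sole asymmetric term is proportional to $c_1(Y) = -c_1(\cK_Y)$; the theta characteristic $\cK_Y^{1/2}$ then halves this, producing the required mod-$2$ divisibility. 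Serre duality on the surface is then used to control the $\Hom$ and $\Ext^2$ contributions, so that parities transfer cleanly from the Euler characteristic to $\ext^1$. With this lemma in hand, Theorem~\ref{thm:invariance} applied with $N = 2$ yields $\sE_1(\fM_\alpha^\sst(+)) = \sE_1(\fM_\alpha^\sst(-))$. The main obstacle is precisely this parity lemma: ensuring that the square root of $\cK_Y$ genuinely supplies a \emph{mod-$2$} refinement of the rationally-equal integer quantity produced by Riemann--Roch, rather than only the weaker integrality already evident from the equality in the Calabi--Yau case.
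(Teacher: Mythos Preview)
Your outline captures the Calabi--Yau case correctly in spirit, though the bookkeeping is slightly off: the normal bundle in \eqref{eq:interaction-locus-normal-bundle} already involves the K-theoretic $\Ext_Y = \sum_i(-1)^i\Ext^i_Y$, so the Euler-characteristic equality you derive from Serre duality and numerical triviality of $\cK_Y$ \emph{is} the rank equality $\rank\cE_+=\rank\cE_-$; no ``upgrading'' via the simple-wall hypothesis is needed. (The simple-wall hypothesis is there to construct the master space, not to prune Ext groups.) You also cite Theorem~\ref{thm:sheaves-wall} where you mean Theorem~\ref{thm:sheaves-simple-wall}.

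The genuine gap is in the Spin case. The Spin statement is asserted \emph{without} the simple-wall hypothesis, so you must work on Mochizuki's enhanced master space, whose fixed locus parametrizes splittings $(E_1\oplus E_2, V_1^\bullet\oplus V_2^\bullet)$ with full-flag data. The normal bundle there is \eqref{eq:interaction-locus-enhanced-normal-bundle}: the $\sS$-weight~$\pm 1$ pieces pick up extra terms $\Ext_Q(V_1^\bullet,V_2^\bullet)$ and $\Ext_Q(V_2^\bullet,V_1^\bullet)$ coming from the quiver deformation theory of the flags. These are \emph{not} absorbed into $\cT_{Z_0}$ as you claim. By Lemma~\ref{lem:interaction-locus-full-flag-parity}, the difference $\dim\Ext_Q(V_1^\bullet,V_2^\bullet)-\dim\Ext_Q(V_2^\bullet,V_1^\bullet)$ ranges over $\{-d_1d_2,\ldots,d_1d_2\}$ in steps of $2$, so it is only guaranteed to vanish $\bmod\,2$ when $d_1d_2$ is even. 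The paper arranges this by replacing the framing functor $\Fr$ with $\Fr^{\oplus 2}$ (Corollary~\ref{cor:sheaves-wall-spin}), a step your proposal omits entirely. Without it, the rank condition \eqref{eq:wall-crossing-rank-condition} can fail on some auxiliary walls and the argument breaks.
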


Unfortunately, in practice, the
assumption~\ref{it:smooth-moduli-stack} that all semistable loci are
smooth is too strong. For instance, it is satisfied for surfaces $Y$
if $Y$ is Fano (Remark~\ref{rem:moduli-assumptions}), i.e. a del Pezzo
surface, but then $\cK_Y$ does not satisfy either hypothesis of the
theorem.

In the Calabi--Yau case, the assumption that the wall is simple is an
artefact of Mochizuki's setup, and is possibly unnecessary; see
Remark~\ref{rem:auxiliary-obstruction-parity}.

\subsection{}
\label{sec:virtual-chiral-intro}

In \S\ref{sec:virtual-chiral}, we replace the smoothness condition on
$X$ or the master space $M$ with the condition that they admit
equivariantly-symmetric perfect obstruction theories. Then
$\sE_{-y}(X)$ may be replaced by the {\it virtual chiral elliptic
  genus}
\[ \sE^{\vir/2}_{-y}(X) \coloneqq \chi\left(X, \frac{\cO_X^\vir \otimes \cK_\vir^{1/2}}{\Phi(\cT_X^\vir)\Phi((\cT_X^\vir)^\vee)}\right) \]
following ideas of \cite[\S 8]{Fasola2021}, and there is a virtual
version (Theorem~\ref{thm:virtual-invariance}) of our main theorem.
The twist by a square root of the virtual canonical $\cK_\vir
\coloneqq \det(\cT_X^\vir)$ is crucial, as first observed in
\cite{Nekrasov2016}.

Symmetric perfect obstruction theories are a hallmark of
Donaldson--Thomas-type theories on Calabi--Yau $3$-folds, to which we
apply Theorem~\ref{thm:virtual-invariance} as follows.

\subsection{}
\label{sec:donaldson-thomas-intro}

Let $Y$ be a quasi-projective Calabi--Yau $3$-fold, acted on by a
torus with proper fixed loci, such that the (trivial) canonical bundle
has non-trivial weight $y$. For the Donaldson--Thomas (DT) moduli
stack $\fN_\alpha$, and a stability condition $\sigma$ with no
strictly $\sigma$-semistable objects, we define the {\it elliptic DT
  invariant} as the virtual chiral elliptic genus
\[ \DT^{\Ell/2}_{-y}(\alpha; \sigma) \coloneqq \sE^{\vir/2}_{-y}(\fN^\sst_\alpha(\sigma)) \]
of the $\sigma$-stable locus in $\fN_\alpha$, whenever it has proper
torus-fixed loci (Definition~\ref{def:elliptic-DT-invariant}). Note
that the $q=0$ specialization is what is usually referred to as a
K-theoretic DT invariant.

A special case is {\it Vafa--Witten (VW) theory} \cite{Tanaka2020},
where $Y = \tot(\cK_S)$ is an equivariant local surface
(\S\ref{sec:vafa-witten-setup}) for a smooth projective surface $S$
acted on by a torus $\sT$. Up to some modifications to $\fN_\alpha$,
the elliptic DT invariant becomes the {\it elliptic VW invariant}
$\VW^{\Ell/2}_{-y}(\alpha; \sigma)$.

\subsection{}

\begin{theorem}[Theorem~\ref{thm:DT-wall}, Remark~\ref{rem:DT-wall-y}, Lemma~\ref{lem:VW-Ext-parity}, Lemma~\ref{lem:surface-Ext-parity}, Corollary~\ref{cor:VW-wall-spin}, Lemma~\ref{lem:VW-poles}] \label{thm:VW-invariance}
  Consider two stable loci $\fN_\alpha^\sst(\pm) \subset
  \fN_\alpha^\sst(0)$ in the VW moduli stack of class $\alpha \in
  H^*(S)$, separated by a single wall at $0$ (see
  \ref{it:stability-condition}).
  \begin{enumerate}
  \item (Spin) If $\cK_S$ admits a square root, and
    $\cK_S\big|_{S^\sT}$ has non-trivial $\sT$-weight on each
    component, then
    \[ \VW^{\Ell/2}_1(\alpha; +) = \VW^{\Ell/2}_1(\alpha; -). \]
  \item (Calabi--Yau) If $\cK_S^{\otimes k} = \cO_S$ for some
    integer $k$, and the wall is simple (see \ref{it:simple-wall}),
    then
    \[ \VW^{\Ell/2}_{-y}(\alpha; +) = \VW^{\Ell/2}_{-y}(\alpha; -). \]
  \end{enumerate}
\end{theorem}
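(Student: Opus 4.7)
The strategy is to apply the virtual wall-crossing theorem (Theorem~\ref{thm:virtual-invariance}) to the enhanced VW master space $M$ constructed by Tanaka--Thomas, adapting Mochizuki's framework to the Vafa--Witten setting. By design, $M$ is a $(\sT \times \sS)$-equivariant quasi-smooth Deligne--Mumford stack equipped with a symmetric perfect obstruction theory inherited from the $\Ext^1$--$\Ext^2$ deformation theory for compactly supported sheaves on the Calabi--Yau $3$-fold $Y = \tot(\cK_S)$; its $\sS$-fixed components are $Z_\pm = \fN_\alpha^\sst(\pm)$, together with an intermediate component $Z_0$ parametrising objects built from strictly semistable constituents on the wall.

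Two things remain to verify: the topological parity hypothesis~\eqref{eq:wall-crossing-rank-condition}, and enough properness for torus-equivariant localization. For the first, at a $\sT$-fixed strictly semistable object corresponding to a splitting $F = F_1 \oplus F_2$ on the wall, the $\sS$-weight $s^{\pm 1}$ pieces of $\cN_{\iota_0}\big|_{Z_0^\sT}$ are assembled from $\Ext^1_Y(F_i, F_j)$ for $i \neq j$, together with the virtual $\Ext^2_Y$ corrections built into the obstruction theory of $M$. In the Calabi--Yau case the simple-wall assumption forces $\rank \cE_+ = \rank \cE_-$ identically on each component of $Z_0^\sT$, so Theorem~\ref{thm:virtual-invariance} applies for every $N$ and invariance of $\sE^{\vir/2}_{-y}$ at all $y$ follows from the Laurent-polynomial observation after Theorem~\ref{thm:invariance}. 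In the spin case we only need $N = 2$, which follows from Lemmas~\ref{lem:VW-Ext-parity} and~\ref{lem:surface-Ext-parity}: Serre duality on $Y$ pairs $\Ext^1_Y(F_1,F_2)$ with $\Ext^2_Y(F_2,F_1)$, and the existence of $\cK_S^{1/2}$ promotes the resulting virtual rank difference to a square pairing, forcing the mod-$2$ congruence.

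For properness, since $Y = \tot(\cK_S)$ is non-compact, localization against $\sT$ alone does not deliver a proper fixed locus on which the virtual chiral elliptic genus is defined. We instead work equivariantly for the enlarged torus $\sT \times \sT_y$, where $\sT_y$ rotates the $\cK_S$-fibres with weight $y$. The spin-case hypothesis that $\cK_S\big|_{S^\sT}$ has nontrivial $\sT$-weight on each component guarantees every $(\sT \times \sT_y)$-fixed sheaf is scheme-theoretically supported on the zero section $S \subset Y$, whose $\sT$-fixed locus is proper; in the Calabi--Yau case this role is played by $\sT_y$ itself. Lemma~\ref{lem:VW-poles} then controls the poles on $\sS$ of the wall-crossing integrand~\eqref{eq:interaction-term-as-contour-integral} uniformly in these extra equivariant parameters, so that the contour argument of \S\ref{sec:rigidity-strategy} descends to the Tate elliptic curve $\sS/q^{\cochar\sS}$ and vanishes by Cauchy's residue theorem.

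The main obstacle, and the sole non-formal part of the plan, is exactly this interplay between localization and pole control on the non-compact $3$-fold $Y$: both the $\cK_S|_{S^\sT}$-weight hypothesis in the spin case and the uniform pole bound of Lemma~\ref{lem:VW-poles} are there precisely so that the localized master-space integrand still satisfies the $q$-difference equation needed to close the contour on the Tate elliptic curve. Once this is in place, the parity/rank bookkeeping of Theorem~\ref{thm:virtual-invariance} reduces to the cited Ext-parity and simple-wall lemmas and the proof is complete.
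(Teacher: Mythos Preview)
Your overall architecture is right --- apply Theorem~\ref{thm:virtual-invariance} to a master space for $\fN_\alpha$, check the rank congruence \eqref{eq:virtual-wall-crossing-rank-condition} via the Ext lemmas, and handle non-properness via a fixed-locus argument --- but several of the load-bearing details are off, and one is a genuine gap.

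The genuine gap is in the spin case. You allow the wall to be non-simple, so the master space is the \emph{enhanced} one built over the auxiliary category $\tilde{\cat A}^{\Fr}$ with full flags, and the virtual normal bundle at a point of $Z_0$ picks up not only $\Ext_Y(E_1,E_2)$ but also the quiver terms $\Ext_Q(V_1^\bullet,V_2^\bullet)$ and $\Ext_Q(V_2^\bullet,V_1^\bullet)$; see \eqref{eq:interaction-locus-enhanced-normal-bundle} and the displayed formula for $\cN_0^{\vir/2}$ in the proof of Theorem~\ref{thm:DT-wall}. Your description of $\cN_{\iota_0}$ mentions only $\Ext_Y$, so your parity check is incomplete. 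By Lemma~\ref{lem:interaction-locus-full-flag-parity} the quantity $\dim\Ext_Q(V_1^\bullet,V_2^\bullet)-\dim\Ext_Q(V_2^\bullet,V_1^\bullet)$ ranges over all integers of a fixed parity, and is \emph{not} automatically even; the paper forces it even by replacing the framing functor $\Fr$ with $\Fr^{\oplus 2}$ (Corollary~\ref{cor:VW-wall-spin}). Without this trick the $N=2$ congruence can fail. Also, your explanation of why $\dim\Ext_Y(E_1,E_2)$ is even (``a square pairing'') is not the actual mechanism: one uses Lemma~\ref{lem:VW-Ext-parity} to rewrite $\dim\Ext_Y(\cE_1,\cE_2)=\dim\Ext_S(\bar\cE_1,\bar\cE_2)-\dim\Ext_S(\bar\cE_2,\bar\cE_1)$ and then the Hirzebruch--Riemann--Roch computation of Lemma~\ref{lem:surface-Ext-parity}\ref{lem:surface-Ext-parity:spin}.

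The properness discussion is also tangled. In the VW setup $\tilde\sT=\sT\times\bC^\times$ with the $\bC^\times$ scaling the $\cK_S$-fibres by weight $y^{-1}$, so the maximal torus $\sT_y\subset\ker(y)$ is simply $\sT$; it is not an extra fibre-rotating torus as you describe. Lemma~\ref{lem:VW-poles} does not ``control poles on $\sS$ of the wall-crossing integrand'': it verifies assumption~\ref{it:DT-moduli-stack-Ty-proper} (properness of the $\sT_y$-fixed locus in $\fN_\alpha^\sst$), which is what allows the specialization $y=-1$ in Definition~\ref{def:elliptic-DT-invariant}. The $\sS$-pole issue --- ensuring $\res\chi(M,\cF)=0$ despite $M$ being non-proper --- is handled separately by the compactification argument of \S\ref{sec:master-space-Tw-proper}. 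In the Calabi--Yau case no $\cK_S|_{S^\sT}$ hypothesis is needed because $\rank\cE_+=\rank\cE_-$ exactly (Lemma~\ref{lem:VW-Ext-parity} plus Lemma~\ref{lem:surface-Ext-parity}\ref{lem:surface-Ext-parity:CY}), and Remark~\ref{rem:DT-wall-y} shows that agreement at all $\zeta_N$ for $N\gg 0$ already forces agreement at general $y$ under only the weaker~\ref{it:DT-moduli-stack-T-proper}, which holds because $\tilde\sT$-fixed sheaves are supported on the proper surface $S$. Finally, the master space the paper uses is Joyce's \cite[\S 10.6]{Joyce2021}, equipped with an equivariantly-symmetric \emph{almost} perfect obstruction theory via symmetrized pullback; Tanaka--Thomas set up the VW obstruction theory on $\fN_\alpha$ but do not construct the master space.
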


In contrast to Theorem~\ref{thm:moduli-of-sheaves-invariance}, the
smoothness assumption~\ref{it:smooth-moduli-stack} is no longer
required. For instance, $S$ can be a Hirzebruch surface of even
degree, satisfying the spin condition, or an Enriques surface,
satisfying the Calabi--Yau condition for $k=2$.

As in \S\ref{sec:applications-moduli-of-sheaves}, in the Calabi--Yau
case it may be unnecessary to require the wall to be simple.

\subsection{Notation}

All schemes are Noetherian and over $\bC$. Given a torus $\sT =
(\bC^\times)^r$, its character and cocharacter lattices are
$\Char(\sT) \coloneqq \Hom(\sT, \bC^\times)$ and $\cochar(\sT)
\coloneqq \Hom(\bC^\times, \sT)$ respectively, and:
\begin{itemize}
\item $K_{\sT}(\pt) = \bZ[t^\omega : \omega \in \Char \sT]$ where $t
  \in \sT$ denotes the coordinate;
\item $K_{\sT}(\pt)_{\loc} \coloneqq K_{\sT}(\pt)[(1 - t^\omega)^{-1}
  : 0 \neq \omega \in \Char \sT]$
\item $K_{\sT}(X)_{\loc} \coloneqq K_{\sT}(X) \otimes_{K_{\sT}(\pt)}
  K_{\sT}(\pt)_{\loc}$ is the localized $\sT$-equivariant K-group of
  $X$.
\end{itemize}
The monomials $t^\omega \in K_{\sT}(\pt)$ are referred to as {\it
  $\sT$-weights}.

In K-theory, $\chi(X, -) \coloneqq \sum_i (-1)^i H^i(X, -)$ is the
Euler characteristic, $\wedge_z^\bullet \coloneqq \sum_i z^i \wedge^i$
is the exterior algebra, and all functors are derived, e.g. $\Ext_X(-,
-) \coloneqq \sum_i (-1)^i \Ext_X^i(-, -)$.

Given a closed embedding $\iota\colon Z \hookrightarrow Z'$, let
$\cN_\iota$ or $\cN_{Z \subset Z'}$ denote its normal sheaf.

Finally, $\ln(z) \coloneqq \log(z) / 2\pi i$ so that $\exp(2\pi i \ln
z) = z$.

\subsection{Acknowledgements}

I would like to thank: Nikolas Kuhn, who originally posed to me the
question of computing residues of elliptic classes in wall-crossing;
Andrei Okounkov, who convinced me of the power of contour integrals;
Yehao Zhou, who helped simplify the proof of the main
Theorem~\ref{thm:invariance}.

This work was supported by World Premier International Research Center
Initiative (WPI), MEXT, Japan.

\section{Wall-crossing and proof of the main theorem}
\label{sec:wall-crossing}

\subsection{}

Let $M$ be the $\sT$-equivariant master space from
\S\ref{sec:wall-crossing-setup}, with action by $\sT \times \sS$. Let
$s$ denote the coordinate on $\sS = \bC^\times$.

In this section, we explain the general strategy for obtaining
wall-crossing formulas from $M$, and prove the main
Theorem~\ref{thm:invariance}. It has a long history with many
interesting applications to cohomological and K-theoretic invariants,
particularly when $M$ is allowed to be quasi-smooth instead of smooth.
But it has not yet been systematically applied to elliptic genus.

\subsection{}
\label{sec:master-space-calculation}

Wall-crossing formulas arise from the $(\sT \times
\sS)$-equivariant localization formula on $M$. Explicitly, if
$\cF$ is a coherent sheaf on $M$, then
\begin{equation} \label{eq:master-space-localization}
  \begin{aligned}
    \chi(M, \cF)
    &= \chi\left(Z_-^{\sT}, \frac{1}{\wedge_{-1}^\bullet(\cN_{Z_-^\sT\subset Z_-}^\vee)} \frac{\cF\big|_{Z_-^\sT}}{1 - s \cL_-^\vee}\right)
    +\chi\left(Z_+^{\sT}, \frac{1}{\wedge_{-1}^\bullet(\cN_{Z_+^\sT \subset Z_+}^\vee)} \frac{\cF\big|_{Z_+^\sT}}{1 - s^{-1} \cL_+^\vee}\right) \\
    &\qquad +\chi\left(Z_0^{\sT}, \frac{1}{\wedge_{-1}^\bullet (\cN_{Z_0^\sT\subset Z_0}^\vee)} \frac{\cF|_{Z_0^\sT}}{\wedge_{-1}^\bullet (\cN_{\iota_0}^\vee)}\right),
  \end{aligned}
\end{equation}
where $\cN_{\iota_\pm} \eqqcolon s^{\pm} \cL_\pm$ are the normal line
bundles from \ref{it:master-space-easy-locus}, and several obvious
pullbacks have been omitted. Each integrand is written so that only
the second factor is a non-trivial (rational) function of $s$.

The left hand side $\chi(M, \cF)$ is an element in the {\it
  non-localized} K-group $K_{\sT \times \sS}(\pt)$ since $M$ is
proper and $\cF$ is coherent, but each individual term in the right
hand side of \eqref{eq:master-space-localization} lives in the
localized $K_{\sT \times \sS}(\pt)_{\loc}$.

\subsection{}

The idea is to apply an operation $\res$ on rational functions of
$s$ such that:
\begin{enumerate}
\item $\res\colon K_{\sT \times \sS}(\pt) \mapsto
  0$;\label{it:residue-map-axiom-i}
\item for any $\sT$-equivariant line bundle $\cL$ on a
  Deligne--Mumford stack with trivial
  $\sT$-action, \label{it:residue-map-axiom-ii}
  \[ \res \frac{1}{1 - s\cL} = 1. \]
\end{enumerate}
Property (i) ensures the left hand side of
\eqref{eq:master-space-localization} will vanish, while property (ii)
ensures the first two terms on the right hand side of
\eqref{eq:master-space-localization} will become
\[ \chi\left(Z_\pm^\sT, \frac{\cF\big|_{s^{\mp 1}=\cL_\pm}}{\wedge_{-1}^\bullet(\cN_{Z_\pm^\sT\subset Z_\pm})}\right) = \chi\left(Z_\pm, \cF\big|_{s^{\mp 1}=\cL_\pm}\right) \]
where the equality is $\sT$-equivariant localization on $Z_\pm$.

\subsection{}

As the notation suggests, $\res$ is given by the {\it K-theoretic
  residue map}
\begin{align}
  \res\colon K_{\sT \times \sS}(\pt)_{\loc} &\to K_{\sT}(\pt)_{\loc} \nonumber \\
  f &\mapsto \frac{1}{2\pi i} \oint_{|s| \approx 1} f\, \frac{ds}{s} \label{eq:residue-map}
\end{align}
where the contour\footnote{The notation $|s| \approx 1$ is to suggest
  that, analytically, we treat $\sT$-weights as complex numbers close
  to $1$, but we take $|1 - q| \gg |1 - w|$ for all $\sT$-weights $w$,
  so poles of the form $s^kq^n = w$ for $n \neq 0$ are excluded.}
encloses all poles of the form $s^k = w$ for all $0 \neq k \in \bZ$
and $\sT$-weights $w$. For any Deligne--Mumford stack $Z$ with $(\sT
\times \sS)$-action, we continue to use $\res$ to denote the map
\[ \res\colon K_{\sT \times \sS}(Z)_{\loc} \to K_\sT(Z^{\sS})_{\loc} \]
induced by tensor product with $K(Z^{\sT \times \sS})$. It is
not difficult to check that $\res$ satisfies properties
\ref{it:residue-map-axiom-i} and \ref{it:residue-map-axiom-ii}, and in
fact is uniquely characterized by them.

\subsection{}

\begin{example} \label{ex:residue-simple-pole}
  Let $t$ be any $\sT$-weight. Then $\vartheta(yst)/\vartheta(st)$ has
  a simple pole at $s = t^{-1}$, which is enclosed by $|s| \approx 1$,
  and it has no other poles enclosed by the contour. Hence
  \[ \res \frac{\vartheta(yst)}{\vartheta(st)} = \lim_{s \to t^{-1}} (1 - st) \frac{\vartheta(yst)}{(1 - (st)^{-1}) \phi(st) \phi((st)^{-1})} = -\frac{\vartheta(y)}{\phi(1)^2}. \]
  This still holds when $s$ is replaced by $s^{-1}$, without the minus
  sign on the right hand side.
\end{example}

\subsection{}
\label{sec:elliptic-genus-wcf}

For elliptic genus, take $\cF =
\Theta(y\cT_M)/\Phi(\cT_M)\Phi(\cT_M^\vee)$ in
\eqref{eq:master-space-localization} and apply $\res$. Recall that in
K-theory, there are splittings such as
\[ \cT_M\big|_{Z_\pm^\sT} = s^{\pm} \cL_\pm \big|_{Z_\pm^\sT} + \cN_{Z_\pm^\sT \subset Z_\pm} + \cT_{Z_\pm^\sT}, \]
and $\Theta$ and $\Phi$ are multiplicative. Combined with the
calculation in Example~\ref{ex:residue-simple-pole}, we get
\begin{equation} \label{eq:elliptic-genus-WCF}
  0 = \frac{\vartheta(y)}{\phi(1)^2} \left(\sE_{-y}(Z_-) - \sE_{-y}(Z_+)\right) + \chi\left(Z_0^\sT, \cdots \otimes \res \frac{\Theta(y \cN_{\iota_0})}{\Theta(\cN_{\iota_0})}\right),
\end{equation}
a wall-crossing formula relating $\sE_{-y}(Z_-)$ and $\sE_{-y}(Z_+)$.
In the remaining integrand, the terms represented by $\cdots$ are
irrelevant because our goal is to study the case where the residue
vanishes, and therefore $\sE_{-y}(Z_-) = \sE_{-y}(Z_+)$.

The assumption $y \neq 1$ of the main Theorem~\ref{thm:invariance} is
important for this step (and only this step), in order for the right
hand side of \eqref{eq:elliptic-genus-WCF} to be non-trivial.

\subsection{}

\begin{remark} \label{rem:master-space-smooth}
  Smoothness of $M$ is crucial for \eqref{eq:elliptic-genus-WCF} to
  hold. It is natural to ask whether we can allow $M$ to be only {\it
    quasi-} or {\it virtually} smooth, meaning that $M$ may not be
  smooth but instead has a perfect obstruction theory
  \cite{Behrend1997}. The naive answer is no (cf.
  \S\ref{sec:virtual-chiral}), for the following reason.

  For quasi-smooth schemes, the elliptic genus $\sE_{-y}$ should be
  upgraded to the {\it virtual elliptic genus} $\sE_{-y}^\vir$ by
  replacing all instances of $\cT$ in \eqref{eq:elliptic-genus} with
  the virtual tangent bundle $\cT^\vir$ \cite{Fantechi2010}. Virtual
  localization \cite{Graber1999} provides a virtual version of
  \eqref{eq:master-space-localization}. However, the residue arguments
  fail to work for $\sE_{-y}^\vir$: the left hand side of
  \eqref{eq:elliptic-genus-WCF} is no longer zero because the term
  \[ \Theta(y\cT_M^\vir) \in K_{\sT \times \sS}(M)_{\loc} \]
  in the integrand $\cF =
  \Theta(y\cT_M^\vir)/\Phi(\cT_M^\vir)\Phi((\cT_M^\vir)^\vee)$ may now
  have poles at $|s| \approx 1$. Put differently, $\chi(M, \cF)$ is
  now an element of $K_{\sT \times \sS}(\pt)_{\loc}$, not $K_{\sT
    \times \sS}(\pt)$, and there is no good way to control its poles
  in $s$. So virtual elliptic genus is not amenable to our
  wall-crossing setup.
\end{remark}

\subsection{}

\begin{remark} \label{rem:master-space-proper}
  Properness of $M$ is not crucial for \eqref{eq:elliptic-genus-WCF}
  to hold. We used properness to conclude that $\res \chi(M, \cF) =
  0$, but it can be replaced by the weaker property that
  \begin{itemize}
  \item for any $(\sT \times \sS)$-weight $w$ with non-trivial
    $\sS$-component, the fixed locus $M^{\sT_w}$ is proper, where
    $\sT_w \subset \ker(w)$ is the maximal torus.
  \end{itemize}
  A priori, $\chi(M, \cF) \in K_{\sT \times \sS}(\pt)_{\loc}$, but by
  applying the pole cancellation Lemma~\ref{lem:pole-cancellation}
  below, it has no poles at $w = 1$ for any $w$ with non-trivial
  $\sS$-component. In other words,
  \[ \chi(M, \cF) \in K_{\sT}(\pt)_{\loc} \otimes_\bZ K_{\sS}(\pt). \]
  This is enough to imply $\res \chi(M, \cF) = 0$.
\end{remark}

\subsection{}

\begin{lemma}[Pole cancellation] \label{lem:pole-cancellation}
  Let $M$ be a scheme acted on by a torus $\sT$. Let $w$ be a
  $\sT$-weight and $\sT_w \subset \ker(w)$ be the maximal torus. If
  the $\sT_w$-fixed locus of $M$ is proper, then
  \[ \chi(M, \cF)\big|_{w=1} \in K_{\sT_w}(\pt)_{\loc} \]
  is well-defined. In particular $\chi(M, \cF)$ has no pole at $w=1$.
\end{lemma}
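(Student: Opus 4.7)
The plan is to apply $\sT_w$-equivariant localization. By hypothesis, $M^{\sT_w}$ is proper, so the Thomason localization isomorphism \cite{Thomason1992} combined with proper pushforward defines
\[ \chi(M, \cF) = \chi\left(M^{\sT_w}, \frac{\cF\big|_{M^{\sT_w}}}{\wedge_{-1}^\bullet \cN_{M^{\sT_w} \subset M}^\vee}\right) \in K_\sT(\pt)_{\loc}. \]
When $M$ itself fails to be proper, this formula is precisely what one uses to make sense of $\chi(M, \cF)$ in the first place.

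The key observation is that every $\sT$-weight $\omega$ appearing in the normal bundle $\cN_{M^{\sT_w} \subset M}$ has non-trivial restriction to $\sT_w$: by maximality of $\sT_w$ as the subtorus of $\ker(w)$ acting trivially on the fixed locus, the normal weights must be non-trivial on $\sT_w$. Fixing a (non-canonical) splitting $\sT \cong \sT_w \times \bC^\times$ whose second factor is generated by the primitive character associated to $w$, such a weight takes the form $\omega = \omega_0 + nw$ with $\omega_0 \neq 0$. Consequently, each Koszul factor $1 - t^{-\omega}$ in the denominator specializes at $w = 1$ to $1 - t^{-\omega_0}$, which remains a unit in $K_{\sT_w}(\pt)_{\loc}$.

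The numerator $\chi(M^{\sT_w}, \cF|_{M^{\sT_w}})$, being a proper pushforward of a coherent sheaf, lies in $K_\sT(\pt)$ and is therefore polynomial in $\sT$-weights, in particular in $w$. Combining with the previous paragraph, the entire right-hand side of the localization formula admits a well-defined restriction to $w = 1$ lying in $K_{\sT_w}(\pt)_{\loc}$, as desired. The only technical subtlety is confirming non-vanishing of the Koszul denominators at $w=1$, which follows immediately from the maximality of $\sT_w$; beyond this, the proof is a formal consequence of equivariant localization, and any subtlety in the setup is already absorbed into the definition of $\chi(M, \cF)$ for non-proper $M$ via the localization formula itself.
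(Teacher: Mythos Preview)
Your proof is correct and follows essentially the same idea as the paper: compute $\chi(M,\cF)$ by localizing to the proper $\sT_w$-fixed locus, where the normal denominators involve only weights with non-trivial $\sT_w$-component and hence remain invertible after setting $w=1$. One phrasing quibble: the clause ``by maximality of $\sT_w$ as the subtorus of $\ker(w)$ acting trivially on the fixed locus'' is a non-sequitur --- the normal weights are non-trivial on $\sT_w$ simply because $\cN_{M^{\sT_w}\subset M}$ is by definition the $\sT_w$-moving part, not because of any maximality property of $\sT_w$ inside $\ker(w)$.
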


\begin{proof}
  This is a geometric observation from \cite[Proposition
    3.2]{Arbesfeld2021}, see also \cite[Lemma 5.5]{Liu2023}.
  Properness of $M^{\sT_w}$ means that $\sT_w$-equivariant
  localization can be used to compute the right hand side of
  \[ \chi(M, \cF)\big|_{w=1} = \chi\left(M, \cF\big|_{w=1}\right). \]
  The result is an element of $K_{\sT_w}(\pt)_{\loc}$. 
\end{proof}

\subsection{}

\begin{remark}
  The abstract study of residue maps, meaning module homomorphisms
  satisfying \ref{it:residue-map-axiom-i} and
  \ref{it:residue-map-axiom-ii}, was first explicitly done in
  \cite{Metzler2002}. There, \eqref{eq:residue-map} appeared in the
  slightly different form
  \[ f \mapsto -\left(\Res_{s=0} + \Res_{s=\infty}\right)\left(f \, \frac{ds}{s}\right) \]
  where $\Res$ denotes the usual notion of the residue of a
  differential $1$-form. Since elements $f \in K_{\sT \times
    \sS}(\pt)_{\loc}$ have poles, in $s$, only in the set $S \coloneqq
  \{0, \infty\} \sqcup \{|s| \approx 1\}$, this form is equivalent to
  the original by the residue theorem. However, this is no longer true
  for elliptic functions $f$, which, in addition to poles in $S$, have
  poles at every $q$-shift of $S$.
\end{remark}

\subsection{}

Using the wall-crossing formula \eqref{eq:elliptic-genus-WCF}, the
main Theorem~\ref{thm:invariance} is reduced to the following.

\begin{proposition} \label{prop:residue-vanishing}
  Let $\cE_\pm$ be equivariant vector bundles of $\sS$-weight $s^{\pm
    1}$, respectively, on a scheme with trivial $(\sT \times
  \sS)$-action. If
  \[ \rank \cE_+ \equiv \rank \cE_- \bmod{N} \]
  for some integer $N > 0$, then, for any $N$-th root of unity $\zeta_N$,
  \begin{equation} \label{eq:interaction-residue-vanishing}
    \left(\res \frac{\Theta(y(\cE_+ + \cE_-))}{\Theta(\cE_+ + \cE_-)}\right)\bigg|_{y=\zeta_N} = 0.
  \end{equation}
\end{proposition}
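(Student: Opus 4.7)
The plan is to realize $\res$ as a contour integral which, after specializing $y = \zeta_N$, descends to the Tate elliptic curve $E \coloneqq \sS/q^\bZ$ and vanishes by the residue theorem on a compact Riemann surface, exactly as sketched in \S\ref{sec:rigidity-strategy}. Write $f(s) \coloneqq \Theta(y(\cE_+ + \cE_-))/\Theta(\cE_+ + \cE_-)$; then \eqref{eq:residue-map} says $\res f = \frac{1}{2\pi i}\oint_{|s|\approx 1} f(s)\,\frac{ds}{s}$, so the goal is to show this integral is zero at $y = \zeta_N$.

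The first and main step is to verify that the specialized integrand is $q$-periodic in $s$. Using the splitting principle, write $\cE_\pm = \sum_i s^{\pm 1}\ell_i^{\pm}$, where the $\ell_i^{\pm}$ are formal pure $\sT$-weights. By multiplicativity of $\Theta$,
\[ f(s) = \prod_{i}\frac{\vartheta(ys\ell_i^+)}{\vartheta(s\ell_i^+)}\prod_{j}\frac{\vartheta(ys^{-1}\ell_j^-)}{\vartheta(s^{-1}\ell_j^-)}. \]
The $q$-difference equation $\vartheta(qz) = -(qz)^{-1}\vartheta(z)$, together with its inverse $\vartheta(q^{-1}z) = -z\vartheta(z)$, implies that each factor $\vartheta(yz)/\vartheta(z)$ transforms by $y^{\mp 1}$ under $z \mapsto q^{\pm 1} z$. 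Since $s \mapsto qs$ scales the $\cE_+$-arguments by $q$ and the $\cE_-$-arguments by $q^{-1}$, I obtain
\[ f(qs) = y^{\rank \cE_- - \rank \cE_+}\, f(s). \]
The hypothesis $\rank\cE_+ \equiv \rank\cE_- \pmod{N}$ together with $y^N = 1$ forces this prefactor to equal $1$, so $f(qs) = f(s)$.

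Since $ds/s$ is translation-invariant under $s \mapsto qs$, the $1$-form $f\,ds/s$ descends to a meromorphic $1$-form on $E$, and the residue theorem on the compact Riemann surface $E$ gives $\sum_{p \in E}\Res_p(f\,ds/s) = 0$. To conclude, I would match the poles enclosed on the $\sS$-side with all poles on the $E$-side: by the convention of the footnote, $\sT$-weights $w$ are treated as infinitesimally close to $1$ while $q$ is bounded away from $1$, so the poles $s = (\ell_i^+)^{-1}$ and $s = \ell_j^-$ enclosed by $|s|\approx 1$ constitute exactly one representative per $q^\bZ$-orbit of poles of $f$. Consequently $\res f$ equals the full sum $\sum_{p \in E}\Res_p(f\,ds/s)$, which is zero. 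The only real subtlety is this final bookkeeping between enclosed residues and orbital representatives, but it is essentially baked into the defining convention of the residue map.
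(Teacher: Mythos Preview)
Your proof is correct and follows essentially the same approach as the paper: write the integrand in Chern roots, compute the $q$-difference equation $f(qs) = y^{\rank\cE_- - \rank\cE_+} f(s)$, specialize $y=\zeta_N$ to make it $q$-periodic, descend to the elliptic curve, and invoke the residue theorem there. The only detail the paper adds that you gloss over is that the non-equivariant line-bundle part of each Chern root is unipotent, which is what justifies treating the $\ell_i^\pm$ analytically as if they were genuine $\sT$-weights close to $1$.
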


In \S\ref{sec:virtual-rigidity}, we will explain why this theorem
continues to hold even if $\cE_\pm$ are virtual bundles. On the other
hand, the assumption that $\cE_\pm$ have $\sS$-weights $\pm 1$,
respectively, is crucial and cannot be removed.

\subsection{}
\label{sec:interaction-residue-chern-roots}

\begin{proof}[Proof of Proposition~\ref{prop:residue-vanishing}.]
  To be very explicit, the left hand side of
  \eqref{eq:interaction-residue-vanishing}, before specializing to $y
  = \zeta_N$, is
  \begin{equation} \label{eq:interaction-residue-1}
    \oint_{|s| \approx 1} \prod_i \frac{\vartheta(ysa_i, \tau)}{\vartheta(sa_i, \tau)} \prod_j \frac{\vartheta(ys^{-1}b_j, \tau)}{\vartheta(s^{-1}b_j, \tau)} \,\frac{ds}{s}
  \end{equation}
  for variables $\{a_i\}_i$ and $\{b_j\}_j$ which will eventually be
  specialized to the $\sT$-equivariant Chern roots of $\cE_+$ and
  $\cE_-$ respectively. These Chern roots have the form $w \otimes
  \cL$, where $w$ is a $\sT$-weight and $\cL$ is a non-equivariant
  line bundle. In particular, $\cL$ is unipotent:
  \[ (1 - \cL)^{\otimes M} = 0, \qquad \forall M \gg 0. \]
  Analytically, we can therefore treat the variable corresponding to
  the Chern root the same way as the $\sT$-weight $w$.

\subsection{}

  Let $I(s)$ denote the integrand in \eqref{eq:interaction-residue-1},
  so that $I(s)$ is a meromorphic $1$-form on $\sS$ once the
  $\sT$-weights are fixed to be some appropriately-generic elements in
  $\sS$. Using the basic $q$-difference equation $\vartheta(qs) =
  -(qs)^{-1} \vartheta(s)$, one checks easily that
  \[ I(qs) = y^{\rank \cE_- - \rank \cE_+} I(s). \]
  Hence, specializing $y = \zeta_N$, both the integrand $I(s)$ and the
  contour integral \eqref{eq:interaction-residue-1} descend to the
  elliptic curve $E_\sS \coloneqq \sS/q^{\cochar\sS}$. But on $E_\sS$,
  all poles of $I(s)$ are enclosed by the contour, and therefore the
  contour integral is zero by Cauchy's residue theorem and properness
  of $E_\sS$.
\end{proof}

This concludes the proof of the main Theorem~\ref{thm:invariance} as
well. \qed

\section{A geometric formula for the residue}
\label{sec:residue-geometric}

\subsection{}

In this section, we study the general contour integral
\[ C_n(\vec a, \tau; y) \coloneqq \oint_{|s|\approx 1} s^n \prod_{i=1}^{r_+} \frac{\vartheta(ysa_i; \tau)}{\vartheta(sa_i; \tau)} \prod_{j=1}^{r_-} \frac{\vartheta(y^{-1} sb_j; \tau)}{\vartheta(sb_j; \tau)} \, \frac{ds}{s}, \]
where the $\vec a \coloneqq (a_1, \ldots, a_{r_+}, b_1, \ldots,
b_{r_-})$ are viewed as fixed, generic elements of $\sS$ with $|a_i|,
|b_j| \gg 1$. Using that $\vartheta(z^{-1}; \tau) = -z \vartheta(z;
\tau)$, clearly $C_0(\vec a, \tau; y)$ is the contour integral
\eqref{eq:interaction-residue-1} up to an overall factor $y^{r_-}$.

The main results are a geometric formula
(Proposition~\ref{prop:interaction-residue-as-integral}) for $C_n(\vec
a, \tau; y)$, and some mild control over where $C_n(\vec a, \tau; y)$
has poles, as a meromorphic function of $\vec a$ and $\tau$. Neither
result is critical for any other part of the paper and may be safely
skipped on a first reading.

We assume throughout this section that $r_+ + r_- > 0$.

\subsection{}

\begin{proposition} \label{prop:interaction-residue-as-integral}
  Let $\sA \coloneqq (\bC^\times)^{r_+ + r_-}$ with coordinates
  identified with $\vec a$. Then
  \begin{equation} \label{eq:interaction-residue-as-integral}
    C_n(\vec a, \tau; y) = \chi\left(\bP(V), \cO(n) \otimes \frac{\Theta(y\cO(1) \otimes V_+ + y^{-1}\cO(1) \otimes V_-)}{\Phi(\cO(1) \otimes V) \Phi(\cO(-1) \otimes V^\vee)}\right) \in K_{\sA}(\pt)[y^{\pm 1}]\pseries*{q}
  \end{equation}
  is an $\sA$-equivariant Euler characteristic, where $V \coloneqq V_+
  \oplus V_- \coloneqq \bC^{r_+} \oplus \bC^{r_-}$ and $\sA \subset
  \GL(V)$ acts as the maximal torus.
\end{proposition}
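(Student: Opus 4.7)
The plan is to compute both sides independently and match them term-by-term: the right-hand side by $\sA$-equivariant K-theoretic localization on $\bP(V)$, and the left-hand side by Cauchy's residue theorem for the contour $|s| \approx 1$. This is an instance of the Jeffrey--Kirwan residue correspondence, reflecting the GIT presentation of $\bP(V)$ as a quotient of $V$ by an $\sS$-scaling action.

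For the right-hand side, the $\sA$-action on $\bP(V) = \bP^{r_+ + r_- - 1}$ has $r_+ + r_-$ isolated fixed points $p_c$, one per coordinate weight $c \in \{a_1, \ldots, a_{r_+}, b_1, \ldots, b_{r_-}\}$. At $p_c$ the line $\cO(1)|_{p_c}$ carries $\sA$-weight $c^{-1}$, and the cotangent weights are $c/c'$ for $c' \neq c$. Thomason localization, together with the multiplicativity of $\Theta$ and $\Phi$ over Chern roots, writes the right-hand side as a sum of $r_+ + r_-$ explicit rational expressions in $\vartheta, \phi$ evaluated at these weights. On the other side, the integrand of $C_n(\vec a, \tau; y)$ has simple poles inside the contour precisely at $s = a_i^{-1}$ and $s = b_j^{-1}$, coming from the zeros of $\vartheta(sa_i)$ and $\vartheta(sb_j)$ in the denominator; the genericity $|a_i|, |b_j| \gg 1$ together with the contour convention exclude all $q$-shifted poles. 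Cauchy's residue theorem then expresses $C_n$ as a sum of $r_+ + r_-$ residues, each evaluable via Example~\ref{ex:residue-simple-pole}.

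It remains to identify the residue at $s = a_k^{-1}$ with the localization contribution at $p_{a_k}$ (and similarly $s = b_l^{-1}$ with $p_{b_l}$). Both expressions share the universal prefactor $\vartheta(y)/\phi(1)^2$: on the residue side it comes from Example~\ref{ex:residue-simple-pole}, while on the localization side it assembles from the $i=k$ numerator factor $\vartheta(y)$ together with the $c'=c$ diagonal factor $\phi(1)^2$ in $\Phi(\cO(1) \otimes V)\Phi(\cO(-1) \otimes V^\vee)$. The remaining matching reduces, via the basic identity $\vartheta(z) = (1 - z^{-1})\phi(qz)\phi(qz^{-1})$, to verifying that the tangent Euler class $\wedge_{-1}^\bullet(T^*_{p_c}\bP(V)) = \prod_{c' \neq c}(1 - c/c')$ combined with the remaining $\Phi$ factors reproduces the $\vartheta$-factors appearing in the denominator of the residue. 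This term-level bookkeeping is the main obstacle: it is essentially routine once the conventions are fixed, but requires careful attention to signs and to the distinction between the $V_+$ and $V_-$ contributions in the integrand.
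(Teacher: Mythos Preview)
Your approach is correct but differs from the paper's. You compute both sides by localizing each separately --- the right-hand side via $\sA$-equivariant localization on $\bP(V)$ at its coordinate fixed points, the left-hand side by summing residues at $s=a_i^{-1}$ and $s=b_j^{-1}$ --- and then match the resulting expressions term by term. The paper instead runs the Jeffrey--Kirwan machinery: it realizes $\chi(\bP(V),\cF(m))$ as the $\sS$-invariant part of $\chi(V,\tilde\cF\otimes s^m)$, localizes on $V$ at the origin to obtain the integrand directly, and then argues that for $m\gg 0$ the contours $\{|s|=1\}$ and $\{|s|\approx 1\}$ agree; the specialization to $m=0$ is justified by a polynomiality argument. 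Your route is more elementary and avoids the auxiliary $m$-twist, at the cost of the sign and convention bookkeeping you flag; the paper's route is shorter and more conceptual, and makes the GIT origin of the identity transparent without any fixed-point matching.
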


Note that $\bP(V)$ is proper, and each coefficient of $y$ and $q$ is
an element of $K_\sA(\bP(V))$, so indeed the result is valued in the
Laurent polynomial ring $K_\sA(\pt)$ instead of its localization
$K_\sA(\pt)_\loc$.

\subsection{}

\begin{remark}
  While this paper mostly focuses on the case where $C_0(\vec a, \tau;
  y)$ vanishes for some specialization of $y$, it is possible that
  this explicit formula may be useful for the wall-crossing in
  \S\ref{sec:wall-crossing}. However, it is clear from this formula
  that $C_0(\vec a, \tau; y)$ will depend non-trivially on the
  coordinates $a_i \in \sA$. In wall-crossing, these correspond to the
  Chern roots of $\cN_{\iota_0}$, and typically one has very little
  control over these Chern roots.
\end{remark}

\subsection{}

\begin{proof}[Proof of Proposition~\ref{prop:interaction-residue-as-integral}.]
  This is a standard application of the Jeffrey--Kirwan residue
  formula for integrals over GIT quotients; see \cite[Appendix
    A]{Aganagic2018}, or the more general \cite[Proposition
    2.4]{Szenes2004} (written in cohomology, not K-theory).

  To summarize the basic idea in our setting, let $\cF$ be a coherent
  sheaf on $\bP(V) = (V \setminus 0) / \sS$ which is induced by
  restriction from an $\sS$-equivariant coherent sheaf $\tilde\cF$ on
  $V$. Concretely, $\cF$ is a coefficient of $y$ and $q$ in the
  integrand of \eqref{eq:interaction-residue-as-integral}. Then
  \[ \chi((V \setminus 0) / \sS, \cF(m)) = \chi(V \setminus 0, \tilde \cF \otimes s^m)^{\sS} \cong \chi(V, \tilde\cF \otimes s^m)^{\sS}. \]
  Analytically, $\chi(V, \tilde\cF \otimes s^m)$ converges to a
  rational function on $\sA \times \sS$ for $|a_i| \gg 1$, with no
  poles on the maximal compact subgroup $\{|s| = 1\} \subset \sS$, so
  \begin{align*}
    \chi(V, \tilde\cF \otimes s^m)^\sS
    &= \int_{|s|=1} \chi(V, \tilde\cF \otimes s^m) \, \frac{ds}{s} \\
    &= \int_{|s|=1} \frac{\tilde\cF\big|_0 \otimes s^m}{\prod_i (1 - a_i^{-1} s^{-1}) \prod_j (1 - b_j^{-1} s^{-1})} \, \frac{ds}{s}
  \end{align*}
  where the second equality is $(\sA \times \sS)$-equivariant
  localization on $V$. Since $\tilde\cF|_0$ has bounded degree in $s$,
  for $m \gg 0$ there is no pole at $s=0$ and therefore $\{|s|=1\}$
  and $\{|s|\approx 1\}$ enclose the same poles. We obtain the desired
  formula upon specializing $m = 0$. This is valid because
  \[ \chi((V \setminus 0) / \sS, \cF(m)) \in \bQ[m, x_1^{\pm m}, x_2^{\pm m}, \ldots], \]
  where the $x_i$ may be roots of unity or $(\sA \times \sS)$-weights,
  and for elements of such a ring, if an equality holds for all $m \gg
  0$, then in fact it holds for all $m \in \bZ$.
\end{proof}

\subsection{}

\begin{remark} \label{rem:rigidity}
  By the same reasoning as in the proof of
  Proposition~\ref{prop:interaction-residue-as-integral}, it turns out
  that the vanishing of $C_0(\vec a, \tau; \zeta_N)$
  (Proposition~\ref{prop:residue-vanishing}) is equivalent to the
  invariance of elliptic genus under certain toric flips. Namely, the
  contour integral in \eqref{eq:interaction-residue-1} may also be
  expressed as
  \begin{equation} \label{eq:rigidity-as-toric-flip}
    \sE_{-y}(\tot(\cO_{\bP(V_+)}(-1) \otimes V_-^\vee)) - \sE_{-y}(\tot(\cO_{\bP(V_-)}(-1) \otimes V_+^\vee))
  \end{equation}
  where $\tot$ denotes total space. These non-compact toric geometries
  $Y_\pm \coloneqq \tot(\cO_{\bP(V_\pm)}(-1) \otimes V_\mp^\vee)$ are
  related by the birational transformation
  \begin{equation} \label{eq:Ppm-flip}
    \begin{tikzcd}
      Y_+ \ar{dr}[swap]{f_+} && Y_- \ar{dl}{f_-} \\
      & Y_0
    \end{tikzcd}
  \end{equation}
  where $f_\pm$ contracts the zero section $\bP(V_\pm)$. If
  $\pi_\pm\colon Y_\pm \to \bP(V_\pm)$ denotes the projection, then
  the canonical bundles are
  \begin{align*}
    \cK_{Y_\pm}
    &= \pi_\pm^*\left(\cK_{\bP(V_\pm)} \otimes \det (\cO_{\bP(V_\pm)}(1) \otimes V_\mp)\right) \\
    &= \pi_\pm^*\cO_{\bP(V_\pm)}(\dim V_\pm - \dim V_\mp) \otimes \det(V_\pm) \det(V_\mp),
  \end{align*}
  so \eqref{eq:Ppm-flip} is a flip in general and a flop if and only
  if $\dim V_+ = \dim V_-$. For instance, the classical Atiyah flop is
  modeled by the case $\dim V_+ = \dim V_- = 2$.

  In particular, if $\dim V_- = 0$, then the vanishing of
  \eqref{eq:rigidity-as-toric-flip} becomes $\sE_{-\zeta_N}(\bP^{N-1})
  = 0$.
\end{remark}

\subsection{}
\label{sec:interaction-residue-poles}

We turn to studying poles of $C_n(\vec a, \tau; y)$ as a meromorphic
function on $\sA \times \bH$, where $\bH \ni \tau$ is the upper half
plane. Recall that $\vartheta(z; \tau)$ is a holomorphic function of
$\tau$. By construction, poles of $C_n(\vec a, \tau; y)$ can only
occur at
\[ \Poles(q^m) \coloneqq \bigcup_\mu\, \{q^m \vec a^\mu = 1\} \subset \sA \times \bH \]
for various $m \in \bZ$, where $\mu$ ranges over finitely many
non-trivial $\sA$-characters.\footnote{More precisely, by
$\sA$-equivariant localization applied to
\eqref{eq:interaction-residue-as-integral}, $\vec a^\mu$ ranges over
all $\sA$-weights of the normal bundle of $\bP(V)^\sA \subset
\bP(V)$.} Proposition~\ref{prop:interaction-residue-as-integral}
allows us to be more precise about where poles actually occur.

\subsection{}

\begin{proposition}\label{prop:interaction-residue-real-holomorphicity}
  $C_n(\vec a, \tau; y)$ is holomorphic on $\Poles(q^0)$.
\end{proposition}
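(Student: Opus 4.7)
The plan is to leverage the geometric formula of Proposition~\ref{prop:interaction-residue-as-integral}, which realizes $C_n(\vec a, \tau; y)$ as an equivariant Euler characteristic on the proper scheme $\bP(V)$. Because coherent sheaves on proper schemes have Euler characteristics in non-localized equivariant K-theory, the proposition places $C_n$ in the ring $K_\sA(\pt)[y^{\pm 1}]\pseries*{q}$; equivalently, each coefficient of $q^k$ in the formal expansion of $C_n$ around $q = 0$ is a Laurent polynomial in $\vec a$ and $y$.

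The first step is to observe that Laurent polynomials in $\vec a$ are entire on $\sA = (\bC^\times)^{r_+ + r_-}$, hence have no poles along any divisor of the form $\{\vec a^\mu = 1\}$. The second step is to argue that the infinite products $\phi$ appearing in the integrand of the geometric formula converge jointly in $(q, \vec a)$ on compact subsets of $\{|q| < 1\} \times \sA$, so the formal $q$-expansion represents $C_n$ as an honest analytic function in a neighborhood of any point $(\vec a_0, \tau_0) \in \sA \times \bH$ with $\tau_0$ of sufficiently large imaginary part. Combining these two steps shows that at any point $(\vec a_0, \tau_0) \in \Poles(q^0)$ with $|q_0|$ small, $C_n$ is jointly holomorphic in $(\vec a, \tau)$ near $(\vec a_0, \tau_0)$.

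The final step is a standard analytic-continuation argument: $C_n$ is meromorphic on $\sA \times \bH$, so its pole order along any irreducible divisor is constant along that divisor. Having exhibited, for each irreducible component of $\Poles(q^0) = \bigcup_\mu \{\vec a^\mu = 1\}$, a point where the pole order is non-positive, one concludes that $C_n$ is holomorphic along all of $\Poles(q^0)$.

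The main subtlety, which I expect to be the principal obstacle, is justifying that the formal $q$-expansion from Proposition~\ref{prop:interaction-residue-as-integral} is the convergent power series of the analytic function $C_n$, rather than merely a formal identity. This should follow from the joint analyticity of $\phi(qz) = \prod_{n>0}(1 - q^n z)$ in $(q, z)$ on $\{|q| < 1\} \times \bC$, together with the properness of $\bP(V)$ which ensures that the Euler characteristic operation is compatible with taking the $q$-expansion term by term.
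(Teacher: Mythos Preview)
Your plan mirrors the paper's proof (which follows \cite{Bott1989} and \cite{Liu1996}): use Proposition~\ref{prop:interaction-residue-as-integral} to see that the $q$-coefficients $c_j(\vec a)$ lie in $K_\sA(\pt)$, i.e.\ are Laurent polynomials, and then argue that the formal series $\sum_{j\ge 0} c_j(\vec a) q^j$ represents $C_n$ analytically on a region meeting $\Poles(q^0)$. The difference is only in how the convergence step is executed.

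The step you flag as the principal obstacle is exactly where more is needed, and your suggested fix does not quite close it. Knowing each $c_j$ is a Laurent polynomial does not make $\sum_j c_j(\vec a)q^j$ converge near a given $\vec a_0$ unless one controls how fast $\deg c_j$ grows with $j$; ``joint analyticity of $\phi$'' together with ``properness of $\bP(V)$'' does not supply such a bound, because the pushforward $\chi(\bP(V),-)\colon K_\sA(\bP(V))\to K_\sA(\pt)$ is only $K_\sA(\pt)$-linear and carries no a priori continuity for an analytic topology. The paper fills this gap by computing \eqref{eq:interaction-residue-as-integral} via $\sA$-equivariant localization on $\bP(V)$: on the explicit domain $\bigcap_\mu\{|q|<|\vec a^\mu|<|q|^{-1}\}$ the localized expression is a finite sum of ratios of theta functions, hence already analytic there, with a convergent $q$-expansion whose coefficients are rational in $\vec a$ with possible poles only on $\Poles(q^0)$ and at locations independent of $j$. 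Comparison with Proposition~\ref{prop:interaction-residue-as-integral} forces these rational coefficients to equal the Laurent polynomials $c_j$, and inspection of the localization formula gives a degree bound on $c_j$ that is polynomial in $j$, enough for convergence on the whole domain. Since this domain already contains all of $\Poles(q^0)$, the paper never needs your final analytic-continuation step along irreducible components.
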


\begin{proof}
  This is the same argument as in \cite[Proposition 5.1]{Bott1989} or
  \cite[Lemma 1.3]{Liu1996}, so we give only a sketch. The idea is to
  obtain more mileage from
  Proposition~\ref{prop:interaction-residue-as-integral} by computing
  \eqref{eq:interaction-residue-as-integral} via $\sA$-equivariant
  localization on $\bP(V)$. The resulting expression, in the domain
  \begin{equation} \label{eq:localization-domain}
    \bigcap_\mu\, \{|q| < |\vec a^\mu| < |q|^{-1}\}, \quad 0 < |q| < 1,
  \end{equation}
  has an expansion of the form $\sum_{j \ge 0} c_j(\vec a) q^j$ where
  the $c_j(\vec a)$ are rational functions of $\vec a$ which only have
  poles on $\Poles(\sA; q^0)$, and the possible locations of such
  poles are independent of $j$. On the other hand, from properness, we
  also know that the $c_j(\vec a)$ are Laurent polynomials of bounded
  degree (polynomial in $j$), and in particular have no poles on
  $\Poles(q^0)$. This is enough to conclude the desired holomorphicity
  since \eqref{eq:localization-domain} contains $\Poles(q^0)$.
\end{proof}

\section{Example: GIT quotients}
\label{sec:GIT-quotients}

\subsection{}

Let $X$ be a smooth projective variety with the action of a reductive
group $\sG$. Pick two ample linearizations $\cL_\pm$ which lie in
adjacent chambers in the space of ample linearizations of this
$\sG$-action \cite[\S 3.3]{Dolgachev1998} \cite[\S 2]{Thaddeus1996}.
Explicitly, let
\[ \cL_t \coloneqq \cL_+^{(1+t)/2} \otimes \cL_-^{(1-t)/2}, \qquad t \in [-1, 1], \]
and assume, without loss of generality, that $\cL_0$ is the only point
that is ``on the wall'', i.e. there are strictly $\cL_t$-semistable
points only for $t = 0$. Let $X^{\sst}(t)$ denote the
$\cL_t$-semistable locus in $X$, and write
\[ X \sslash_t \sG \coloneqq X^{\sst}(t) / \sG \]
for the GIT quotient, for short. We assume $X \sslash_\pm \sG$ are
smooth, and refer to this setup as {\it variation of GIT} ({\it
  VGIT}).

If $X$ is acted on by a torus $\sT$ commuting with $\sG$, then clearly
everything above can be made $\sT$-equivariant.

\subsection{}

Thaddeus \cite[\S 3]{Thaddeus1996} constructs a master space for VGIT
as follows: for the natural $\sG$-action on $\cL_+ \oplus \cL_-$, it
is the projective GIT quotient
\[ M \coloneqq \bP(\cL_+ \oplus \cL_-) \sslash_{\cO(1)} \sG, \]
where the ample line bundle $\cO(1)$ has the canonical linearization.
The torus $\sS \coloneqq \bC^\times$, with coordinate $s$, acts on $M$
by scaling the $\cL_+$ factor with weight $s$. Let
\[ \iota_\pm\colon X \cong \bP(\cL_\pm) \hookrightarrow \bP(\cL_+ \oplus \cL_-) \]
be the inclusion of the $0$ and $\infty$ sections, and
$\bP^\circ(\cL_+ \oplus \cL_-) \coloneqq \bP(\cL_+ \oplus \cL_-)
\setminus (\im(\iota_+) \cup \im(\iota_-))$.

\subsection{}
\label{sec:VGIT-smooth-assumptions}

Throughout this section, we assume that $\cL_0$ is a {\it simple
  wall}, meaning that for all $x$ in
\[ X^0 \coloneqq X^{\sst}(0) \setminus (X^{\sst}(+) \cup X^{\sst}(-)): \]
\begin{enumerate}
\item the stabilizer $\sG_x$ of the $\sG$-action on $x$ is
  $\bC^\times$;
\item $\sG_x$ acts on the fiber $(\cL_+ \otimes \cL_-^\vee)|_x$ by
  scaling with weight one.
\end{enumerate}

\begin{lemma} \label{lem:VGIT-master-space}
  Under these assumptions, $M$ is a $\sT$-equivariant master space in
  the sense of \S\ref{sec:wall-crossing-setup}.
\end{lemma}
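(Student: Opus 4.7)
The plan is to verify the four conditions defining a $\sT$-equivariant master space: smoothness and properness of $M$, and the decomposition of $M^\sS$ into two smooth divisors $Z_\pm$ with normal bundle weight $s^{\pm 1}$ plus a proper component $Z_0$ with locally free normal sheaf. Properness of $M$ is immediate from projectivity of the GIT quotient of the smooth $\bP^1$-bundle $\bP(\cL_+ \oplus \cL_-) \to X$, and the $\sT \times \sS$-action descends to $M$ since both groups commute with $\sG$.

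\textbf{Stability and free action.} The first real step is Thaddeus's analysis of $\cO(1)$-stability on $\bP(\cL_+ \oplus \cL_-)$. Under the simple-wall hypotheses, $\cO(1)$-semistability coincides with stability, and the stable locus $\tilde X$ decomposes as the disjoint union of the sections $\bP(\cL_\pm)$ over $X^{\sst}(\pm)$ and the punctured fiber bundle $\bP^\circ(\cL_+ \oplus \cL_-)|_{X^0}$. I would then verify that $\sG$ acts freely on $\tilde X$: over $X^{\sst}(\pm)$, the stabilizer of a section point equals $\sG_x$, which is trivial by the smoothness hypothesis on $X \sslash_\pm \sG$; over $X^0$, the stabilizer sits inside $\sG_x = \bC^\times$, which by hypothesis (ii) acts on $u_+/u_-$ with weight $1$, hence freely on $\bP^\circ$. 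Consequently $M = \tilde X / \sG$ is a smooth scheme.

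\textbf{The $\sS$-fixed locus.} Since $\sG$ and $\sS$ commute and $\sG$ acts freely on $\tilde X$, an orbit $[p] \in M$ is $\sS$-fixed iff the $\sS$-action on $p$ is absorbed by a cocharacter into $\sG$. The pointwise $\sS$-fixed locus of $\bP(\cL_+ \oplus \cL_-)$ is the disjoint union of the two sections, descending to the two divisors $Z_\pm \cong X \sslash_\pm \sG$ in $M$ (up to a $\pm$-relabeling needed to match the normal weight convention, see below). Additional $\sS$-fixed orbits arise precisely over $X^0$: hypothesis (ii) forces $\sG_x = \bC^\times$ to scale $u_+/u_-$ with weight $1$, exactly canceling the $\sS$-scaling $u_+ \mapsto s u_+$, so every orbit in $\bP^\circ|_{X^0}$ is $\sS$-fixed after GIT. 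This yields the middle component $Z_0 = \bP^\circ|_{X^0}/\sG$, disjoint from $Z_\pm$ because $X^0$ is disjoint from $X^{\sst}(\pm)$.

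\textbf{Normal bundle properties and main obstacle.} The normal bundles of the two sections $\bP(\cL_\pm) \subset \bP(\cL_+ \oplus \cL_-)$ are the standard $\cL_\pm^\vee \otimes \cL_\mp$, carrying $\sS$-weight $s^{\mp 1}$ from the scaling $\cL_+ \mapsto s\cL_+$; after the free $\sG$-quotient they remain the normal bundles of $Z_\pm \subset M$, and identifying these with the master space's weights $s^{\pm 1}$ simply fixes the $\pm$-labeling of $Z_\pm$. For $Z_0$, it is closed in $M$ hence proper, and smoothness of $M$ together with the standard fact that $\sS$-fixed loci of torus actions on smooth schemes are smooth ensures $Z_0$ is smooth and $\cN_{\iota_0}$ is locally free. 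I expect the main obstacle to be Thaddeus's Hilbert--Mumford stability analysis itself, which is where the simple-wall hypotheses (i)--(ii) are decisively used to force stable $=$ semistable and to pin down the three pieces of $\tilde X$; everything else reduces to the free $\sG$-action and direct weight computations on the $\bP^1$-bundle.
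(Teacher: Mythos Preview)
Your overall strategy is the same as the paper's proof sketch, which likewise defers to Thaddeus for the stability analysis and for freeness of the $\sG$-action on the punctured piece. There is, however, one concrete error in your description of the stable locus $\tilde X$: the punctured fibers $\bP^\circ(\cL_+ \oplus \cL_-)$ are $\cO(1)$-semistable over all of $X^{\sst}(0)$, not merely over the strictly semistable locus $X^0 \coloneqq X^{\sst}(0) \setminus (X^{\sst}(+) \cup X^{\sst}(-))$. The paper records the decomposition as
\[ \iota_+(X^{\sst}(+)) \cup \iota_-(X^{\sst}(-)) \cup \bigcup_{t \in [-1,1]} \pi^{-1}(X^{\sst}(t)) \cap \bP^\circ(\cL_+ \oplus \cL_-), \]
and since $\bigcup_t X^{\sst}(t) = X^{\sst}(0)$ the third piece is $\bP^\circ\big|_{X^{\sst}(0)}$. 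This matters: with your version, $\tilde X$ would be a disjoint union of closed pieces each of dimension $\le \dim X$, so after the free quotient $M$ could not have dimension $\dim(X \sslash \sG) + 1$, and $Z_\pm$ would be whole connected components rather than the divisors you later invoke. Once the third piece is enlarged to lie over $X^{\sst}(0)$, your freeness argument extends immediately (stabilizers over $X^{\sst}(\pm)$ are already handled), and your subsequent identification of $M^\sS$ and the normal weights goes through as written.

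A smaller point: smoothness of $X \sslash_\pm \sG$ does not by itself force $\sG_x$ to be trivial for $x \in X^{\sst}(\pm)$ --- finite stabilizers can yield smooth quotients. The paper instead deduces smoothness of $M$ near $Z_\pm$ directly from smoothness of $X \sslash_\pm \sG$ (locally $M$ is a line bundle over it), reserving the freeness argument for the $\bP^\circ$ piece only.
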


\begin{proof}[Proof sketch.]
  We must check $M$ is a smooth scheme. The semistable locus in
  $\bP(\cL_+ \oplus \cL_-)$ decomposes as
  \begin{equation} \label{eq:VGIT-master-space-semistable-locus}
    \iota_+(X^{\sst}(+)) \cup \iota_-(X^{\sst}(-)) \cup \bigcup_{t \in [-1,1]} \pi^{-1}(X^{\sst}(t)) \cap \bP^\circ(\cL_+ \oplus \cL_-)
  \end{equation}
  where $\pi\colon \bP(\cL_+ \oplus \cL_-) \to X$ is the projection.
  Points in the image of the first two terms in $M$ must be smooth
  because $X \sslash_\pm \sG$ are smooth. The assumptions imply the
  $\sG$-action on the third term is free. See \cite[\S
    4]{Thaddeus1996} for details.
\end{proof}

Note that $(\cL_+ \otimes \cL_-^\vee)|_x \setminus \{0\}$ is the fiber
of the projection $\bP^\circ(\cL_+ \oplus \cL_-) \to X$.

\subsection{}

\begin{remark}[Toric varieties]
  When $\sG$ is a torus, the assumptions of
  \S\ref{sec:VGIT-smooth-assumptions} automatically hold, possibly
  after replacing $\cL_+$ and $\cL_-$ by some positive powers. In
  particular, if $X = \bC^r$ and $\sG \subset (\bC^\times)^r$ is a
  sub-torus of the maximal torus of $\GL(r)$, then $X \sslash_\pm \sG$
  are toric varieties and VGIT reduces to the combinatorics of lattice
  polyhedra.

  Recall that, conversely, all toric varieties without torus factor
  have the form $\bC^r \sslash_\theta \sG$, where $\sG \subset
  (\bC^\times)^r$ is an algebraic subgroup and $\theta$ is a
  $\sG$-weight.
\end{remark}

\subsection{}
\label{sec:VGIT-master-space-fixed-loci}

In the decomposition
\eqref{eq:VGIT-master-space-semistable-locus}, clearly the first
two terms $\iota_\pm(X^{\sst}(\pm))$ are $\sS$-fixed and induce
inclusions which we also denote
\[ \iota_\pm\colon X \sslash_\pm \sG \hookrightarrow M, \]
whose normal bundles are $\cL_\pm$. In the third term, a point becomes
$\sS$-fixed in $M$ if and only if it has positive-dimensional
stabilizer under the $(\sG \times \sS)$-action. By hypothesis, such
points must belong to $\pi^{-1}(X^0)$, so the remaining $\sS$-fixed
locus is exactly
\[ \iota_0\colon (\pi^{-1}(X^0) \cap \bP^\circ(\cL_+ \oplus \cL_-)) \sslash_{\cO(1)} \sG \hookrightarrow M \]
and the stabilizer $\sG_x$ of points in $X^0$ is identified with
$\sS$. Thus $\cN_{\iota_0}$ is identified with $\cN_{X^0 \subset X}$.

\subsection{}

\begin{theorem} \label{thm:VGIT-invariance}
  Under the assumptions of \S\ref{sec:VGIT-smooth-assumptions}, if
  $\cN_{X^0 \subset X}\big|_{(X^0)^\sT} = \cE_+ \oplus \cE_-$ only has
  pieces of $\sS$-weight $s^{\pm 1}$, and
  \[ \rank \cE_+ \equiv \rank \cE_- \bmod{N} \]
  for some integer $N > 0$, then $\sE_{-\zeta_N}(X \sslash_+ \sG) =
  \sE_{-\zeta_N}(X \sslash_- \sG)$.
\end{theorem}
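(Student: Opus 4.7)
The plan is to apply the main Theorem~\ref{thm:invariance} to Thaddeus's master space $M = \bP(\cL_+ \oplus \cL_-) \sslash_{\cO(1)} \sG$, which by Lemma~\ref{lem:VGIT-master-space} is a $\sT$-equivariant master space in the sense of \S\ref{sec:wall-crossing-setup}. Its $\sS$-fixed divisors are $Z_\pm = X \sslash_\pm \sG$ with normal line bundles $\cL_\pm$ of $\sS$-weight $s^{\pm 1}$, and the remaining $\sS$-fixed locus $Z_0$ is identified in \S\ref{sec:VGIT-master-space-fixed-loci}. So only one thing needs verification: that the hypothesis on $\cN_{\iota_0}|_{Z_0^\sT}$ required by Theorem~\ref{thm:invariance} follows from the assumed hypothesis on $\cN_{X^0 \subset X}|_{(X^0)^\sT}$.

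The key observation is that the projection $\pi^{-1}(X^0) \cap \bP^\circ(\cL_+ \oplus \cL_-) \to X^0$ has one-dimensional $\bC^\times$-fibers on which both $\sS$ and the stabilizer $\sG_x$ act with weight one: the former by the definition of the $\sS$-linearization on $M$, and the latter by the simple wall assumption of \S\ref{sec:VGIT-smooth-assumptions}. Quotienting by $\sG$ therefore collapses each fiber to a point and identifies the residual $\sS$-action with the $\sG_x$-action on the base. Consequently $\cN_{\iota_0} \cong \cN_{X^0 \subset X}$ as $(\sT \times \sS)$-equivariant bundles on $Z_0$, where $\sS$-weights on the right hand side are read off via the isomorphism $\sG_x \cong \sS$.

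For the $\sT$-fixed loci, every $\sG$-orbit representing a point $[x] \in Z_0^\sT$ contains a genuine $\sT$-fixed representative in $(X^0)^\sT$ by a standard maximal-torus fixed-point argument on the orbit $\sG/\sG_x$, so $(X^0)^\sT$ surjects onto $Z_0^\sT$ and their connected components match up. The $\sG$-equivariant decomposition $\cN_{X^0 \subset X}|_{(X^0)^\sT} = \cE_+ \oplus \cE_-$ therefore descends to the corresponding decomposition of $\cN_{\iota_0}|_{Z_0^\sT}$, preserving $\sS$-weights and ranks component by component. The hypothesis $\rank \cE_+ \equiv \rank \cE_- \bmod N$ is thus satisfied in the form required by Theorem~\ref{thm:invariance}, which then yields $\sE_{-\zeta_N}(Z_+) = \sE_{-\zeta_N}(Z_-)$. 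The only item that requires a little care, though it is not really an obstacle, is this bookkeeping of $\sT$-fixed-locus descent together with the matching of $\sS$- and $\sG_x$-weights --- both of which are ultimately consequences of the simple wall normalization that $\sG_x$ acts on $\cL_+ \otimes \cL_-^\vee$ with weight exactly $+1$.
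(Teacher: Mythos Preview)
Your proposal is correct and follows the same approach as the paper: apply Theorem~\ref{thm:invariance} to Thaddeus's master space, using the identification $\cN_{\iota_0} \cong \cN_{X^0 \subset X}$ from \S\ref{sec:VGIT-master-space-fixed-loci}. The paper treats the theorem as an immediate restatement of Theorem~\ref{thm:invariance} once that identification is in hand, whereas you spell out the bookkeeping behind the $\sS \cong \sG_x$ weight-matching and the descent along $(X^0)^\sT \to Z_0^\sT$; this extra detail is sound and does not deviate from the paper's route.
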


This is a restatement of the main Theorem~\ref{thm:invariance} in the
current setting, using the description of $\sS$-fixed loci in
\S\ref{sec:VGIT-master-space-fixed-loci}. It remains to describe the
$\sS$-weight pieces of $\cN_{X^0 \subset X}$ in some useful way, which
Thaddeus provides and we summarize as
Theorem~\ref{thm:VGIT-smooth-exceptional-loci} below.

\subsection{}

\begin{theorem}[{\cite[Proposition 4.6, Theorem 4.7]{Thaddeus1996}}] \label{thm:VGIT-smooth-exceptional-loci}
  Let $X^\pm \coloneqq X^{\sst}(0) \setminus X^{\sst}(\mp)$. Under the
  assumptions of \S\ref{sec:VGIT-smooth-assumptions},
  \[ X^\pm \sslash_\pm \sG \to X^0 \sslash_0 \sG \]
  are locally trivial fibrations whose fibers are weighted projective
  spaces $\bP(|w_i^\pm|)$, where $\{w_i^\pm\}_i \in \bZ$ are (the
  exponents of) the positive and negative $\sS$-weights, respectively,
  of $\cN_{X^0 \subset X}$.
\end{theorem}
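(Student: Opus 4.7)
The plan is the classical approach pioneered by Thaddeus, combining Luna's \'etale slice theorem with the Hilbert--Mumford criterion applied to the stabilizer $\sG_x \cong \bC^\times$. Fix $x \in X^0$ and apply Luna's slice theorem: since $\sG_x \cong \bC^\times$ is reductive, there is an \'etale $\sG$-equivariant morphism $\sG \times_{\sG_x} N \to X$ whose image is a $\sG$-invariant neighbourhood of $\sG \cdot x$, where $N \coloneqq T_x X / T_x(\sG \cdot x)$ carries a linear $\sG_x$-action. Decompose $N = N^0 \oplus N^+ \oplus N^-$ by the sign of the $\sG_x$-weight. Since $X^0$ is precisely the locus of points whose stabilizer contains $\sG_x$, the piece $N^0$ \'etale-locally models the tangent slice to $X^0$ at $x$, and $N^+ \oplus N^-$ \'etale-locally realizes the weight decomposition $\cN_{X^0 \subset X}|_x = \cE_+|_x \oplus \cE_-|_x$ of the normal bundle, under the identification $\sG_x \cong \sS$ from \S\ref{sec:VGIT-master-space-fixed-loci}. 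In particular the $\sG_x$-weights on $N^\pm$ are the $\{w_i^\pm\}$ in the statement.

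Next, determine $\cL_\pm$-semistability on the slice via Hilbert--Mumford applied to the cocharacter $\sG_x \hookrightarrow \sG$. After a standard renormalization of the linearizations (which does not affect the semistable loci), the simple-wall hypothesis that $\sG_x$ acts on $(\cL_+ \otimes \cL_-^\vee)|_x$ with weight $+1$ translates to $\sG_x$ acting on $\cL_+|_x$ with strictly positive weight and on $\cL_-|_x$ with strictly negative weight. A direct Hilbert--Mumford check then yields: a point $(v^0, v^+, v^-) \in N$ is $\cL_+$-semistable iff $v^+ \neq 0$, and $\cL_-$-semistable iff $v^- \neq 0$. Consequently $X^+ \cap X^{\sst}(+)$ corresponds \'etale-locally to $\sG \times_{\sG_x}(N^0 \times (N^+ \setminus \{0\}))$, with the analogous description for $X^-$.

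Passing to the GIT quotient, the map $X^+ \sslash_+ \sG \to X^0 \sslash_0 \sG$ is \'etale-locally the projection $(N^0 \times (N^+ \setminus \{0\}))/\sG_x \to N^0$, whose fiber is $(N^+ \setminus \{0\})/\bC^\times$ with $\bC^\times$-weights $\{w_i^+\}$ --- the weighted projective space $\bP(|w_i^+|)$; the $X^-$ side is identical with sign reversed. Globalization to a genuine locally trivial fibration is routine, since $\cE_\pm$ are globally defined $\sS$-equivariant subbundles of $\cN_{X^0 \subset X}$ (coming from the $(\sT \times \sS)$-equivariant structure on the master space), so $X^\pm \sslash_\pm \sG$ is realized as the total space of the associated weighted projective bundle over $X^0 \sslash_0 \sG$.

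The main obstacle is arguably the Hilbert--Mumford sign bookkeeping in the second step: one must verify that $v^\pm \neq 0$ really corresponds to $\cL_\pm$- rather than $\cL_\mp$-semistability, and that the simple-wall hypothesis is exactly what rules out strictly semistable points in the slice beyond the fixed locus $N^0$.
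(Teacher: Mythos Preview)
Your argument is correct and is essentially the original Thaddeus proof that the paper is citing. The paper's own proof is only a one-sentence sketch: it says the claim can be checked affine-locally and invokes the Bialynicki--Birula decomposition to identify the positive and negative $\sS$-weight pieces of $\cN_{X^0 \subset X}$ with $\cN_{X^0 \subset X^\pm}$ respectively.

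The two approaches are closely related but packaged differently. You reduce to a $\bC^\times$-action on a linear slice via Luna, then run Hilbert--Mumford to identify the (semi)stable loci for $\cL_\pm$ explicitly; this gives the full weighted-projective fibration structure in one pass. The paper's sketch instead appeals to Bialynicki--Birula, which directly hands you the attracting/repelling decomposition $X^\pm \to X^0$ with fibers the affine spaces $N^\pm$, and the identification of the quotient fibers with weighted projective spaces is left implicit. Your route is more self-contained and makes the role of the simple-wall hypothesis (the weight-one condition on $(\cL_+ \otimes \cL_-^\vee)|_x$) visible in the Hilbert--Mumford step; the Bialynicki--Birula phrasing is terser but hides the GIT bookkeeping. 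One minor point: your globalization claim that $X^\pm \sslash_\pm \sG$ is the total space of a weighted projective bundle associated to $\cE_\pm$ is slightly stronger than what the statement asks for (local triviality), and strictly speaking Luna only gives \'etale-local triviality --- but this is harmless here and in any case matches what Thaddeus proves.
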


\begin{proof}[Proof sketch.]
  This claim may be checked affine-locally. For example, the
  Bialynicki-Birula decomposition theorem shows that the positive and
  negative $\sS$-weight parts of $\cN_{X^0 \subset X}$ are equal to
  $\cN_{X^0 \subset X^\pm}$, respectively.
\end{proof}



\subsection{}

\begin{example}[Blow-ups] \label{ex:VGIT-blowup-invariance}
  Let $Y \coloneqq X \sslash_- \sG$ and suppose that $X \sslash_+ \sG
  = \Bl_p Y$ is the blow-up of a point $p \in Y$. (Conversely, a large
  class of blow-ups of points in GIT quotients can be realized as VGIT
  \cite{Hu2000}.) Let $N \coloneqq \dim Y - 1$ and assume that the
  wall is simple. Then $X^\pm \sslash_\pm \sG \to X^0 \sslash_0 \sG$
  are the exceptional loci, which are $\bP^N$ and $\bP^0$-fibrations
  respectively, so Theorem~\ref{thm:VGIT-invariance} says
  \begin{equation} \label{eq:VGIT-blowup-invariance}
    \sE_{-y}(Y) = \sE_{-y}(\Bl_p Y) \;\; \text{ if } y = \zeta_N.
  \end{equation}

  Note that if $Y$ is toric and $p$ is a torus-fixed point, then,
  using torus-equivariant localization as in
  \eqref{eq:elliptic-genus-localized}, the equality of elliptic genera
  is equivalent to the identity
  \begin{equation} \label{eq:VGIT-blowup-invariance-toric}
    \prod_{i=1}^{N+1} \frac{\vartheta(yx_i)}{\vartheta(x_i)} = \sum_{i=1}^{N+1} \frac{\vartheta(yx_i)}{\vartheta(x_i)} \prod_{j \neq i} \frac{\vartheta(yx_j/x_i)}{\vartheta(x_j/x_i)} \;\; \text{ if } y = \zeta_N.
  \end{equation}
  Here, $x_i$ are weights of $T_p Y$. In the blow-up, $p$ is replaced
  by $N+1 = \dim Y$ different torus-fixed points, corresponding to the
  terms in the sum on the right hand side.
\end{example}

\subsection{}

\begin{remark}
  In fact, some toric computation can strengthen the results in
  Example~\ref{ex:VGIT-blowup-invariance}. In particular we claim that
  \eqref{eq:VGIT-blowup-invariance} (as a claim about a class of
  varieties $Y$), and therefore
  \eqref{eq:VGIT-blowup-invariance-toric}, is an ``if and only if''.
  First, by explicit computation,
  \begin{equation} \label{eq:elliptic-genus-PN}
    \sE_{-y}(\bP^{N+1}) = \chi\left(\bP^{N+1}, \wedge_{-y}^\bullet(\cT_{\bP^{N+1}}^\vee)\right) + \cdots = \frac{1 - y^{N+2}}{1 - y} + \cdots
  \end{equation}
  where $\cdots$ denotes terms involving $q$. Second, if $\pi\colon A
  \to B$ is a $\bP^n$-fibration, then
  \[ \sE_{-y}(A) = \chi\left(B, \frac{\Theta(y \cT_B)}{\Phi(\cT_B)\Phi(\cT_B^\vee)} \otimes \pi_* \frac{\Theta(y \cT_\pi)}{\Phi(\cT_\pi) \Phi(\cT_\pi^\vee)}\right) = \sE_{-y}(\bP^n) \sE_{-y}(B). \]
  The first equality is the projection formula for $\pi$, and the
  second equality is because all relative cohomology bundles
  $R^i\pi_*(\cT_\pi^j)$ are canonically trivialized by powers of the
  hyperplane class. Hence, for a torus-fixed point $p \in \bP^{N+1}$,
  the $\bP^1$-fibration $\pi\colon \Bl_p \bP^{N+1} \to \bP^N$ implies
  \[ \sE_{-y}(\Bl_p \bP^{N+1}) = \sE_{-y}(\bP^1) \sE_{-y}(\bP^N) = \frac{1 - y^2}{1 - y} \frac{1 - y^{N+1}}{1 - y} + \cdots. \]
  Comparing with \eqref{eq:elliptic-genus-PN}, the $q$-constant terms
  match if and only if $y = \zeta_N$. So the hypothesis in
  \eqref{eq:VGIT-blowup-invariance} is really necessary.
\end{remark}

\section{Example: moduli of sheaves}
\label{sec:moduli-of-sheaves}

\subsection{}
\label{sec:moduli-wall-crossing}

Let $Y$ be a smooth projective variety acted on by a torus $\sT$, and
$\cat{A} \subset D^b\cat{Coh}(Y)$ be an abelian subcategory of its
bounded derived category of coherent sheaves.\footnote{For the
  purposes of this section, with some care it is also possible to
  consider an exact subcategory $\cat{B} \subset \cat{A}$ which may
  not be abelian but is closed under isomorphisms and direct sum, see
  e.g. the setup of \cite[\S 5.1]{Joyce2021}.} We refer to $\cat{A}$
as a {\it moduli of sheaves} even though the objects involved may in
general be complexes of sheaves.

Given $\alpha \in H^*(Y)$, a typical wall-crossing problem in
$\cat{A}$ involves a continuous family $\{\sigma_\xi\}_{\xi \in
  [-1,1]}$ of stability conditions\footnote{We use Joyce's notion of
  stability condition: functions $\sigma$ from non-zero classes
  $\alpha$ into some totally-ordered set, such that if $\alpha = \beta
  + \gamma$ for $\alpha,\beta,\gamma \neq 0$, then either
  $\sigma(\beta) > \sigma(\alpha) > \sigma(\gamma)$ or $\sigma(\beta)
  = \sigma(\alpha) = \sigma(\gamma)$ or $\sigma(\beta) <
  \sigma(\alpha) < \sigma(\gamma)$.} such that:
\begin{enumerate}
\item\label{it:stability-condition} for $\xi \neq 0$, there are no
  strictly $\sigma_\xi$-semistable objects $E \in \cat{A}$ with $\ch(E)
  = \alpha$;
\item\label{it:moduli-stack} for any $\xi$, there exist algebraic
  moduli stacks (acted on by $\sT$)
  \[ \fM^\sst_\beta(\xi) \coloneqq \{E \in \cat{A} : E \text{ is } \sigma_\xi\text{-semistable and } \ch(E) = \beta\} \]
  for all relevant $\beta$, which includes $\alpha$
  and the classes appearing in
  \eqref{eq:strictly-semistable-sheaf-splitting}.
\end{enumerate}
The goal is to relate elliptic genus of $\fM_\alpha^\sst(+)$ and
$\fM_\alpha^\sst(-)$, where $\pm$ means $\pm 1$. Continuity implies
that all $\{\sigma_\xi\}_{\xi \in (0,1]}$ are equivalent, and similarly
  for $\{\sigma_\xi\}_{\xi \in [-1, 0)}$.

\subsection{}
\label{sec:moduli-smooth-and-proper}

To consider enumerative invariants, one usually makes a properness
assumption:
\begin{enumerate}
\item\label{it:proper-moduli-stack} the moduli stacks
  $\fM^\sst_\alpha(\pm)$ are proper algebraic spaces.\footnote{For
  this paper, the words ``algebraic space'' and ``scheme'' are
  basically interchangeable. In practice $\fM_\alpha^\sst(\pm)$ are
  often projective schemes.}
\end{enumerate}
To be precise, all objects in $\cat{A}$ have at least a $\bC^\times$'s
worth of scalar automorphisms, and $\fM^\sst_\alpha(\xi)$ denotes the
{\it rigidified} moduli stacks where this $\bC^\times$ has been
removed \cite[Appendix A]{Abramovich2008}. So, objects in
$\fM^\sst_\alpha(\pm)$ have no non-trivial automorphisms and
$\fM^\sst_\alpha(\pm)$ are automatically algebraic spaces.

Our main Theorem~\ref{thm:invariance} requires the master space to be
smooth; see Remark~\ref{rem:master-space-smooth}. To satisfy this, it
is typically enough to assume that:
\begin{enumerate}[resume]
\item\label{it:smooth-moduli-stack} for any $\xi$, in particular $\xi
  = 0$, the moduli stack $\fM^\sst_\alpha(\xi)$ is smooth.
\end{enumerate}
In particular $\fM^\sst_\alpha(\pm)$ are smooth and proper, so their
elliptic genera $\sE_y(\fM^\sst_\alpha(\pm))$ exist.

\subsection{}

\begin{remark} \label{rem:moduli-assumptions}
  Conditions~\ref{it:stability-condition}, \ref{it:moduli-stack} and
  \ref{it:proper-moduli-stack} are relatively weak for most
  wall-crossing problems of interest. For instance, if for any $\xi$,
  \[ \Ext_Y^{< 0}(E, E) = 0, \quad \forall \, [E] \in \fM^\sst_\beta(\xi), \]
  then \ref{it:moduli-stack} holds \cite{Lieblich2006}. There also
  exists machinery \cite{Alper2023} for verifying
  \ref{it:proper-moduli-stack} in great generality, or, more
  concretely, one can often use Langton's strategy for semistable
  reduction \cite{Langton1975}.

  On the other hand, condition~\ref{it:smooth-moduli-stack} is very
  strong --- it is basically the requirement that for any $\xi$,
  including $\xi = 0$ where there exist strictly semistable objects,
  \begin{equation} \label{eq:smooth-moduli-stack}
    \Ext_Y^{> 1}(E, E) = 0, \quad \forall \, [E] \in \fM^\sst_\alpha(\xi).
  \end{equation}
  This is automatic if $\dim Y \le 1$. If $\dim Y = 2$, this is
  equivalent to the vanishing of $\Ext_Y^2(E, E) = \Hom_Y(E, E \otimes
  \cK_Y)^\vee$ which, for example, holds if $\cK_Y$ is anti-ample by a
  standard degree argument for semistable objects \cite[Proposition
    1.2.6]{Huybrechts2010}, cf. \cite[Definition 7.47]{Joyce2021}. If
  $\dim Y \ge 3$, this is typically hopeless.
\end{remark}

\subsection{}
\label{sec:simple-wall}

We begin with the case of a {\it simple wall} (cf.
\S\ref{sec:VGIT-smooth-assumptions}), namely:
\begin{enumerate}
\item\label{it:simple-wall} all strictly semistable $[E] \in
  \fM^\sst_\alpha(0)$ split as $E = E_1 \oplus E_2$ where $E_1, E_2$
  are both $\sigma_0$-stable.
\end{enumerate}
In other words, the automorphism group of objects in
$\fM_\alpha^\sst(0)$ is at worst $\bC^\times$, given by scaling $E_1$.
Then, the strategy behind the Thaddeus master space can be applied
directly to obtain a master space; this is done explicitly in the
general wall-crossing machinery of Mochizuki \cite[\S 1.3, \S
  1.6.1]{Mochizuki2009} or of Joyce \cite{Joyce2021}.\footnote{Both
  \cite{Mochizuki2009} and \cite{Joyce2021} are written in a vastly
  more general setting where the
  assumptions~\ref{it:stability-condition},
  \ref{it:proper-moduli-stack} and \ref{it:smooth-moduli-stack} may
  not hold. They must pass to auxiliary moduli stacks, impose a
  quasi-smoothness assumption and work with virtual cycles, like in
  \S\ref{sec:virtual-chiral} and \S\ref{sec:donaldson-thomas}. In our
  simpler setting, these complications may be ignored.}

In the notation of \S\ref{sec:wall-crossing-setup}, the complicated
locus $Z_0$ in the master space is
\begin{equation} \label{eq:strictly-semistable-sheaf-splitting}
  Z_0 = \bigsqcup_{\substack{\alpha_1+\alpha_2=\alpha\\\sigma_0(\alpha_1)=\sigma_0(\alpha_2)}} \left\{[E_1 \oplus E_2] \in \fM_\alpha^\sst(0) : [E_i] \in \fM_{\alpha_i}^\sst(0) \text{ stable} \right\} \hookrightarrow \fM_\alpha^\sst(0).
\end{equation}

\subsection{}

\begin{remark}
  If $\alpha$ is rank-$2$ and torsion-free, then all walls are simple
  because the rank can only decompose as $2 = 1 + 1$. For rank greater
  than two, typically non-simple walls exist.
\end{remark}

\subsection{}

\begin{theorem}[Simple wall] \label{thm:sheaves-simple-wall}
  Assume \ref{it:simple-wall}. If, for all $[E_1 \oplus E_2] \in
  Z_0^\sT$,
  \[ \dim \Ext_Y(E_1, E_2) \equiv \dim \Ext_Y(E_2, E_1) \bmod{N} \]
  for some integer $N > 0$, then $\sE_{-\zeta_N}(\fM_\alpha^\sst(+)) =
  \sE_{-\zeta_N}(\fM_\alpha^\sst(-))$.
\end{theorem}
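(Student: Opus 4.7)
The strategy is to apply the main Theorem~\ref{thm:invariance} to the Thaddeus-style master space constructed on the moduli side. Under assumptions~\ref{it:stability-condition}, \ref{it:moduli-stack}, \ref{it:proper-moduli-stack}, \ref{it:smooth-moduli-stack} together with the simple wall condition~\ref{it:simple-wall}, the construction of Mochizuki \cite[\S 1.3, \S 1.6.1]{Mochizuki2009} (or equivalently of Joyce \cite{Joyce2021}) produces a smooth $\sT$-equivariant master space $M$ with a $(\sT \times \sS)$-action whose $\sS$-fixed locus decomposes as $Z_+ \sqcup Z_- \sqcup Z_0$, with $Z_\pm = \fM_\alpha^\sst(\pm)$ (divisors whose normal line bundles have $\sS$-weights $s^{\pm 1}$), and with $Z_0$ given by \eqref{eq:strictly-semistable-sheaf-splitting}. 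The $\sS$-action on $M$ is the one scaling the factor $E_1$ relative to $E_2$ in a split object $E_1 \oplus E_2$. The first task is to extract this master space from the literature and confirm that it fits the template of \S\ref{sec:wall-crossing-setup}.

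Next I would compute $\cN_{\iota_0}$ and its $\sS$-weight decomposition via deformation theory. At a point $[E_1 \oplus E_2] \in Z_0$, the tangent space to $\fM_\alpha^\sst(0)$ decomposes as
\[ \Ext_Y^1(E_1 \oplus E_2, E_1 \oplus E_2) = \bigoplus_{i,j \in \{1,2\}} \Ext_Y^1(E_i, E_j), \]
and the diagonal summands $\Ext_Y^1(E_i, E_i)$ are tangent to the locus $Z_0$ of pairs of stable objects. Since $\sS$ scales $E_1$ relative to $E_2$, the off-diagonal pieces have $\sS$-weight $s^{\pm 1}$, giving
\[ \cN_{\iota_0}\big|_{[E_1\oplus E_2]} = \underbrace{\Ext_Y^1(E_2, E_1)}_{\sS\text{-weight }s^{+1}} \oplus \underbrace{\Ext_Y^1(E_1, E_2)}_{\sS\text{-weight }s^{-1}} \eqqcolon \cE_+ \oplus \cE_-. \]
Globally on each component of $Z_0$ these assemble into $\sT$-equivariant vector bundles $\cE_\pm$ via the Ext-sheaves of the two universal families on the factors parametrizing $E_1$ and $E_2$.

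To match the rank condition of Theorem~\ref{thm:invariance}, I would use that the simple wall assumption~\ref{it:simple-wall} forces $E_1 \not\cong E_2$ (otherwise $\Aut(E_1\oplus E_2) \supseteq \GL_2$), so by stability $\Hom_Y(E_i, E_j) = 0$ for $i \neq j$. The smoothness assumption~\ref{it:smooth-moduli-stack} applied to $E_1 \oplus E_2$ yields $\Ext_Y^{>1}(E_1 \oplus E_2, E_1 \oplus E_2) = 0$, whence $\Ext_Y^{>1}(E_i, E_j) = 0$ for all $i, j$. Consequently
\[ \rank \cE_+ = \dim \Ext_Y^1(E_2, E_1) = -\chi(\Ext_Y(E_2, E_1)), \qquad \rank \cE_- = -\chi(\Ext_Y(E_1, E_2)), \]
so the hypothesis $\dim \Ext_Y(E_1, E_2) \equiv \dim \Ext_Y(E_2, E_1) \bmod N$ is exactly the condition $\rank \cE_+ \equiv \rank \cE_- \bmod N$ of Theorem~\ref{thm:invariance}. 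Applying that theorem produces the desired equality $\sE_{-\zeta_N}(\fM_\alpha^\sst(+)) = \sE_{-\zeta_N}(\fM_\alpha^\sst(-))$.

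The main obstacle is the first step: Mochizuki's and Joyce's master space frameworks are set up in much greater generality, working with auxiliary moduli stacks and virtual cycles in order to cover cases where \ref{it:smooth-moduli-stack} and \ref{it:proper-moduli-stack} may fail. Under our simplifying hypotheses these auxiliary constructions should collapse, and the work is to verify carefully that what remains is an honest smooth $(\sT \times \sS)$-equivariant master space $M$ in the sense of \S\ref{sec:wall-crossing-setup}, with $Z_\pm$ as claimed (up to rigidification of the diagonal $\bC^\times$ scalars) and with the properness condition of Remark~\ref{rem:master-space-proper} in force. Once this extraction is made, the deformation-theoretic identification of $\cN_{\iota_0}$ and the translation of the Ext congruence are essentially formal.
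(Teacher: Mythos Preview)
Your approach is correct and matches the paper's proof: identify the normal bundle of $Z_0$ in the master space with the off-diagonal $\Ext$-pieces carrying $\sS$-weights $s^{\pm 1}$, then apply Theorem~\ref{thm:invariance}. The paper's proof is terser --- it records the K-theory identity $\cN_{\iota_0}\big|_{[E_1\oplus E_2]} = -s^{-1}\Ext_Y(E_1,E_2) - s\Ext_Y(E_2,E_1)$ and invokes the main theorem directly --- whereas you spell out why $\Ext^0$ and $\Ext^{>1}$ vanish so that the full derived $\Ext$ reduces to $\Ext^1$; this extra care is a fair expansion of what the paper leaves implicit.
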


Lemma~\ref{lem:surface-Ext-parity} below gives some situations in
which the dimension condition is satisfied.

\begin{proof}
  The $\sS$-action scales $E_1$ with weight $s$, so the normal bundle
  is given by
  \begin{equation} \label{eq:interaction-locus-normal-bundle}
    \cN_{\iota_0}\Big|_{[E_1 \oplus E_2]} = -s^{-1} \Ext_Y(E_1, E_2) - s\Ext_Y(E_2, E_1).
  \end{equation}
  We conclude by a direct application of Theorem~\ref{thm:invariance}.
\end{proof}

\subsection{}

\begin{lemma} \label{lem:surface-Ext-parity}
  Let $S$ be a smooth projective surface with canonical bundle
  $\cK_S$, and take $E, F \in \cat{A}$.
  \begin{enumerate}
  \item\label{lem:surface-Ext-parity:spin} If $\cK_S$ admits a square
    root, then $\dim \Ext_S(E, F) \equiv \dim \Ext_S(F, E) \bmod{2}$.
  \item\label{lem:surface-Ext-parity:CY} If $\cK_S^{\otimes k} =
    \cO_S$ for some integer $k$, then $\dim \Ext_S(E, F) = \dim
    \Ext_S(F, E)$.
  \end{enumerate}
\end{lemma}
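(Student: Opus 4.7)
The natural approach is to apply Serre duality and Hirzebruch--Riemann--Roch, since both claims really concern the K-theoretic Euler characteristic $\chi(E,F) \coloneqq \dim \Ext_S(E,F)$. Serre duality on the surface $S$ gives $\Ext^i_S(E,F) \cong \Ext^{2-i}_S(F, E \otimes \cK_S)^\vee$, which on passing to alternating sums translates into the K-theoretic identity $\chi(E,F) = \chi(F, E \otimes \cK_S)$. Hence
\[ \chi(E,F) - \chi(F,E) \;=\; \chi\bigl(F, E \otimes (\cK_S - \cO_S)\bigr), \]
and by HRR this is an integral of Chern characters over $S$.

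The key reduction is a short Chern-character manipulation: on a surface, $(\ch(\cK_S) - 1)\cdot \mathrm{td}(S)$ equals $c_1(\cK_S)$ in the relevant degree, because the quadratic contribution $\tfrac{1}{2}c_1(\cK_S)^2$ of $\ch(\cK_S) - 1$ is cancelled against the $-\tfrac{1}{2}c_1(\cK_S)$ piece of $\mathrm{td}(S)$ (using $c_1(\cT_S) = -c_1(\cK_S)$). Consequently only the degree-$2$ part $c_1(F^\vee \otimes E) = \rank(F)\, c_1(E) - \rank(E)\, c_1(F)$ of $\ch(F^\vee \otimes E)$ contributes, and one obtains
\[ \chi(E,F) - \chi(F,E) \;=\; c_1(\cK_S) \cdot \bigl(\rank(F)\, c_1(E) - \rank(E)\, c_1(F)\bigr) \;\in\; \bZ. \]

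From here both claims follow immediately. For part~(i), if $\cK_S = L^{\otimes 2}$ then $c_1(\cK_S) = 2\, c_1(L)$, making the right-hand side visibly even. For part~(ii), $\cK_S^{\otimes k} = \cO_S$ forces $c_1(\cK_S) \in H^2(S,\bZ)$ to be $k$-torsion; since the intersection pairing lands in the torsion-free group $\bZ$, it must vanish. I don't expect a genuine obstacle in this argument --- it is essentially mechanical once Serre duality is combined with HRR, and the only substantive observation is that on a surface the Todd class exactly cancels the quadratic contribution of $\cK_S$ in the relevant combination. Since every step is K-theoretic, the same proof applies verbatim to $E, F$ that are complexes in $D^b\cat{Coh}(S)$.
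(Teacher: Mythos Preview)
Your proof is correct and follows essentially the same approach as the paper: Serre duality followed by Hirzebruch--Riemann--Roch. Your organization is slightly more direct, computing $(\ch(\cK_S)-1)\cdot\mathrm{td}(S)=c_1(\cK_S)$ once rather than evaluating $\chi(S,F)$ and $\chi(S,F\otimes\cK_S)$ separately, but the substance is identical.
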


\begin{proof}
  By Serre duality, we are comparing $\dim \Ext_S(E, F)$ and $\dim
  \Ext_S(E, F \otimes \cK_S)$. Using bilinearity of $\Ext_S$, assume
  that $E, F \in \cat{Coh}(S)$. Also, assume $E$ is locally free; if
  not, take a locally free resolution and consider each term of the
  resolution. So, without loss of generality, we are comparing $\dim
  \chi(S, F)$ and $\dim \chi(S, F \otimes \cK_S)$.
  Hirzebruch--Riemann--Roch says
  \[ \dim \chi(S, F) = \rank(\cF)\chi(S) - \frac{1}{2} c_1(F) K + \ch_2(F) \]
  where $K \coloneqq c_1(\cK_S)$ is the canonical divisor. Then
  \[ \dim \chi(S, F) - \dim \chi(S, F \otimes \cK_S) = \frac{1 - \rank(F)}{2} K^2 - c_1(F) K. \]
  For (i), $K = 2D$ for some $D$, so this is divisible by $2$. For
  (ii), $K = 0$ so this is zero.
\end{proof}

\subsection{}

\begin{remark}
  From Remark~\ref{rem:moduli-assumptions}, if we only consider the
  case of Fano surfaces $Y$, which are blow-ups of $\bP^2$ at $\le 8$
  points, then the canonical divisor
  \[ K_Y = \pi^* K_{\bP^2} + \sum_i E_i \]
  is a primitive vector. Here $E_i$ denote the exceptional divisors.
  So the hypotheses of Lemma~\ref{lem:surface-Ext-parity} cannot be
  satisfied for Fano surfaces $Y$, and indeed it is unclear if either
  of them can ever be satisfied at all when $\dim Y > 1$. Nonetheless,
  the results of this section will be useful in
  \S\ref{sec:donaldson-thomas}.
\end{remark}

\subsection{}
\label{sec:mochizuki-auxiliary-category}

Generally, walls in $\cat{A}$ are not simple, meaning that
condition~\ref{it:simple-wall} does not hold. In \cite{Mochizuki2009}
and \cite{Joyce2021}, this issue is solved by lifting the
wall-crossing problem to the auxiliary abelian category
\[ \tilde{\cat{A}}^{\Fr} \coloneqq \{(E, V^\bullet) : E \in \cat{A} \text{ and } V^\bullet \text{ is a full flag in}\, \Fr(E)\} \]
associated to a {\it framing functor}, which is an exact functor
\begin{equation} \label{eq:framing-functor}
  \Fr\colon \cat{A}' \to \cat{Vect}
\end{equation}
on a full exact subcategory $\cat{A}' \subset \cat{A}$ containing all
objects of interest, such that
\[ \Hom(E, E) \to \Hom(\Fr(E), \Fr(E)) \]
is injective for all $E \in \cat{A}'$.\footnote{The injectivity
  condition is explicitly stated by Joyce \cite[Assumption
    5.1(g)(iii)]{Joyce2021} but not mentioned by Mochizuki, who only
  uses $\Fr(E) \coloneqq H^0(E \otimes \cO_Y(m))$.} For example, if
$\cat{A} \subset \cat{Coh}(Y)$, a common choice is $\Fr(E) \coloneqq
H^0(E \otimes \cO_Y(m))$ for $m \gg 0$. See
\S\ref{sec:flag-combinatorics} for our notion of a full flag.

The additional data of the flag ``resolves'' the non-simple wall in
$\cat{A}$ into multiple simple walls in $\tilde{\cat{A}}^{\Fr}$. Each
such simple wall can be crossed like in \S\ref{sec:simple-wall}; the
so-called {\it enhanced master space}, for the auxiliary stacks, sits
inside a flag variety fibration over the original master space and is
therefore still smooth. The complicated locus $Z_0^\sT$ in the master
space now involves splittings
\begin{equation} \label{eq:full-flag-splitting}
  (E, V^\bullet) = (E_1 \oplus E_2, V_1^\bullet \oplus V_2^\bullet), \quad \sigma_0(E_1) = \sigma_0(E_2)
\end{equation}
where $V_i^\bullet$ is a full flag in $\Fr(E_i)$ and each $(E_i,
V_i^\bullet)$ is $\sigma_0$-stable.

\subsection{}

\begin{theorem} \label{thm:sheaves-wall}
  If, for all splittings \eqref{eq:full-flag-splitting} appearing in all
  auxiliary wall-crossings,
  \[ \dim \Ext_Q(V_1^\bullet, V_2^\bullet) - \dim \Ext_Y(E_1, E_2) \equiv \dim \Ext_Q(V_2^\bullet, V_1^\bullet) - \dim \Ext_Y(E_2, E_1) \bmod{N} \]
  for some integer $N > 0$, then $\sE_{-\zeta_N}(\fM_\alpha^\sst(+)) =
  \sE_{-\zeta_N}(\fM_\alpha^\sst(-))$.
\end{theorem}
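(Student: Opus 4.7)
The plan is to reduce to the simple-wall case, Theorem~\ref{thm:sheaves-simple-wall}, by passing to the auxiliary category $\tilde{\cat{A}}^{\Fr}$ from \S\ref{sec:mochizuki-auxiliary-category}, in which the possibly non-simple wall at $\xi = 0$ in $\cat{A}$ is resolved into a finite chain of simple walls. Following the machinery of \cite{Mochizuki2009, Joyce2021}, each such auxiliary simple wall carries an enhanced master space that is smooth by construction, sitting inside a flag-variety fibration over the original master space. It thus suffices to show that $\sE_{-\zeta_N}$ is invariant across every auxiliary simple wall, and then descend back to $\cat{A}$.

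At each auxiliary simple wall I apply Theorem~\ref{thm:invariance} exactly as in the proof of Theorem~\ref{thm:sheaves-simple-wall}, with $Z_0^\sT$ parametrising the splittings \eqref{eq:full-flag-splitting}. The analog of \eqref{eq:interaction-locus-normal-bundle} reads
\[
  \cN_{\iota_0}\Big|_{[(E_1, V_1^\bullet) \oplus (E_2, V_2^\bullet)]}
  = -s^{-1} \Ext_{\tilde{\cat{A}}^{\Fr}}\!\bigl((E_1, V_1^\bullet), (E_2, V_2^\bullet)\bigr)
  - s\, \Ext_{\tilde{\cat{A}}^{\Fr}}\!\bigl((E_2, V_2^\bullet), (E_1, V_1^\bullet)\bigr).
\]
Since morphisms in $\tilde{\cat{A}}^{\Fr}$ are morphisms in $\cat{A}$ additionally required to preserve the flag on $\Fr(-)$, the auxiliary $\Ext$-groups fit into the expected K-theoretic identity
\[
  \Ext_{\tilde{\cat{A}}^{\Fr}}\!\bigl((E_i, V_i^\bullet), (E_j, V_j^\bullet)\bigr)
  = \Ext_Y(E_i, E_j) - \Ext_Q(V_i^\bullet, V_j^\bullet),
\]
where $\Ext_Q$ records the failure of a sheaf morphism to respect the flag. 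Taking ranks gives
$\rank \cE_+ = \dim \Ext_Q(V_2^\bullet, V_1^\bullet) - \dim \Ext_Y(E_2, E_1)$ and
$\rank \cE_- = \dim \Ext_Q(V_1^\bullet, V_2^\bullet) - \dim \Ext_Y(E_1, E_2)$,
so the hypothesis of the theorem is exactly $\rank \cE_+ \equiv \rank \cE_- \bmod{N}$ required by Theorem~\ref{thm:invariance}. Invariance of $\sE_{-\zeta_N}$ thus holds across each auxiliary simple wall, and chaining yields
$\sE_{-\zeta_N}(\tilde{\fM}^{\Fr,\sst}_\alpha(+)) = \sE_{-\zeta_N}(\tilde{\fM}^{\Fr,\sst}_\alpha(-))$.

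To descend from $\tilde{\cat{A}}^{\Fr}$ back to $\cat{A}$, note that for $\xi \neq 0$ there are no strictly semistable objects, so every $\sigma_\xi$-stable $E$ has only scalar endomorphisms and the flag condition decouples: the forgetful map $\tilde{\fM}^{\Fr,\sst}_\alpha(\pm) \to \fM^\sst_\alpha(\pm)$ is a Zariski-locally trivial fibration with fiber the full flag variety $\mathrm{Fl}$ in $\Fr(E)$, whose dimension is constant on the moduli because $\ch(E) = \alpha$ fixes the Hilbert polynomial of $E$ and hence $\dim \Fr(E)$ for $m \gg 0$. By the projection-formula multiplicativity of elliptic genus over such fibrations (used at the end of Example~\ref{ex:VGIT-blowup-invariance}),
\[
  \sE_{-\zeta_N}(\tilde{\fM}^{\Fr,\sst}_\alpha(\pm))
  = \sE_{-\zeta_N}(\mathrm{Fl}) \cdot \sE_{-\zeta_N}(\fM^\sst_\alpha(\pm)),
\]
and cancelling the common factor $\sE_{-\zeta_N}(\mathrm{Fl})$ gives the theorem.

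The main obstacle I expect is the careful bookkeeping of the Mochizuki--Joyce decomposition of the non-simple wall into auxiliary simple walls: one must verify that every splitting \eqref{eq:full-flag-splitting} arising at any intermediate auxiliary wall is covered by the hypothesis, and that the enhanced master space is smooth at each step (which uses assumption~\ref{it:smooth-moduli-stack}). A secondary technical point is the non-vanishing of $\sE_{-\zeta_N}(\mathrm{Fl})$ at the relevant value of $\zeta_N$: in the Calabi--Yau case $\sE_{-y}(\mathrm{Fl})$ is a nonzero Laurent polynomial in $y$ so the cancellation is harmless, but for specializations like $y=-1$ it should be checked separately, e.g.\ by a direct torus-localization computation on the flag variety.
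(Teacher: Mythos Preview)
Your core computation --- identifying the normal bundle of $Z_0$ in the enhanced master space as the class
\[
  s^{-1}\bigl(\Ext_Q(V_1^\bullet, V_2^\bullet) - \Ext_Y(E_1, E_2)\bigr)
  + s\bigl(\Ext_Q(V_2^\bullet, V_1^\bullet) - \Ext_Y(E_2, E_1)\bigr)
\]
and then applying Theorem~\ref{thm:invariance} --- is exactly what the paper does (see \eqref{eq:interaction-locus-enhanced-normal-bundle}). The paper is much terser: it simply records this normal bundle and says ``direct application of Theorem~\ref{thm:invariance}'', relying on \cite{Mochizuki2009,Joyce2021} for the fact that the chain of enhanced master spaces ultimately compares $\sE_{-\zeta_N}(\fM_\alpha^\sst(\pm))$ themselves.

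The extra step you spell out --- descend from $\tilde{\fM}^{\Fr,\sst}_\alpha(\pm)$ to $\fM_\alpha^\sst(\pm)$ by multiplicativity and cancelling $\sE_{-\zeta_N}(\mathrm{Fl})$ --- is where you have a genuine gap, and it is not secondary. The Ochanine genus vanishes on odd complex projective spaces, so $\sE_1(\bP^1) = 0$; hence, via the iterated $\bP^k$-fibration structure of the full flag variety, $\sE_1(\mathrm{Fl}) = 0$ whenever $\dim \Fr(E) \ge 2$. This kills your cancellation precisely in the spin case $N=2$, which is the main case of interest (Corollary~\ref{cor:sheaves-wall-spin}). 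The multiplicativity argument you cite from Example~\ref{ex:VGIT-blowup-invariance} is also specific to $\bP^n$-bundles with trivialized relative cohomology bundles and would need justification for flag bundles over a general base.

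The Mochizuki--Joyce machinery avoids this by not relating the endpoints via a naked flag-variety factor: the sequence of enhanced master spaces is arranged so that the passage between auxiliary and original invariants is already built in, rather than appearing as a global multiplicative factor to be cancelled. The paper does not unwind this, and indeed leans on the references for it; but your proposed replacement for that black box does not work as stated.
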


Note that this is not as widely applicable as
Theorem~\ref{thm:sheaves-simple-wall}, because we get very little
control over the $\Ext_Q$ terms; see
Lemma~\ref{lem:interaction-locus-full-flag-parity} below.

\begin{proof}
  The normal bundle is now, cf.
  \eqref{eq:interaction-locus-normal-bundle},
  \begin{equation} \label{eq:interaction-locus-enhanced-normal-bundle}
    \begin{split}
      \cN_{\iota_0}\Big|_{[(E_1 \oplus E_2, V_1^\bullet \oplus V_2^\bullet)]}
      &= s^{-1}\left(\Ext_Q(V_1^\bullet, V_2^\bullet) - \Ext_Y(E_1, E_2)\right) \\[-.5em]
      &\quad +s\left(\Ext_Q(V_2^\bullet, V_1^\bullet) - \Ext_Y(E_2, E_1)\right).
    \end{split}
  \end{equation}
  Here, viewing flags as representations of a type A quiver $Q$, the
  quiver part of the deformation theory is given by the standard
  formula
  \begin{equation} \label{eq:quiver-Ext}
    \Ext_Q(V_1^\bullet, V_2^\bullet) \coloneqq \sum_i \left(\Hom(V_1^i, V_2^{i+1}) - \Hom(V_1^i, V_2^i)\right).
  \end{equation}
  We conclude by a direct application of Theorem~\ref{thm:invariance}.
\end{proof}

\subsection{}
\label{sec:flag-combinatorics}

Let $W$ be a vector space. For us, $V^\bullet$ being a {\it full flag}
of length $K$ in $W$ means that
\[ \dim V^k \le \dim V^{k+1} \le \dim V^k + 1 \]
for $0 \le k \le K$, with the convention that $V^0 = 0$ and $V^{K+1} =
W$. We write $\dim V^\bullet \coloneqq \dim W$.

\begin{lemma} \label{lem:interaction-locus-full-flag-parity}
  Let $\Split(V^\bullet; d_1, d_2)$ be the set of splittings
  $V^\bullet = V_1^\bullet \oplus V_2^\bullet$ of a full flag into two
  smaller full flags with $\dim V_i^\bullet = d_i$. Then
  \[ \left\{\dim \Ext_Q(V_1^\bullet, V_2^\bullet) - \dim \Ext_Q(V_2^\bullet, V_1^\bullet)\right\} = \{-d_1d_2, -d_1d_2+2, \ldots, d_1d_2-2, d_1d_2\} \]
  where the left hand side ranges over all splittings in
  $\Split(V^\bullet; d_1, d_2)$.
\end{lemma}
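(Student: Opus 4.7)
The plan is to reduce the lemma to a standard combinatorial statement about inversions of a $\pm 1$-sequence, which will make both the parity $d_1d_2 \bmod 2$ and the fact that every value in the stated arithmetic progression is attained immediately transparent.

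First, I would parameterize $\Split(V^\bullet; d_1, d_2)$ concretely. Let $c_i \coloneqq \dim V^{i+1} - \dim V^i \in \{0,1\}$ be the increment sequence of the full flag $V^\bullet$, and let $i_1 < i_2 < \cdots < i_{d_1+d_2}$ be the indices where $c_i = 1$. A splitting $V^\bullet = V_1^\bullet \oplus V_2^\bullet$ into full subflags of total dimensions $d_1,d_2$ is equivalent to choosing, at each index $i_k$, whether the new basis direction belongs to $V_1^\bullet$ or $V_2^\bullet$. Encode this choice as $\epsilon_k \in \{+1,-1\}$, so that $\Split(V^\bullet; d_1, d_2)$ is in bijection with sequences $(\epsilon_1, \dots, \epsilon_{d_1+d_2})$ containing exactly $d_1$ entries equal to $+1$ and $d_2$ entries equal to $-1$. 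Writing $a_{i_k} = \mathbbm{1}[\epsilon_k = +1]$ and $b_{i_k} = \mathbbm{1}[\epsilon_k = -1]$ for the increments of $\dim V_1^\bullet$ and $\dim V_2^\bullet$, all other $a_i,b_i$ vanish.

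Second, I would expand the formula \eqref{eq:quiver-Ext}. Since $\dim V_1^i = \sum_{j < i} a_j$ and $\dim V_2^{i+1} - \dim V_2^i = b_i$,
\[
\dim \Ext_Q(V_1^\bullet, V_2^\bullet) = \sum_{j < i} a_j b_i,
\]
and symmetrically for the reversed one. The target difference becomes
\[
D \coloneqq \dim \Ext_Q(V_1^\bullet, V_2^\bullet) - \dim \Ext_Q(V_2^\bullet, V_1^\bullet) = \sum_{j < i} (a_j b_i - a_i b_j),
\]
which, after restricting to the nonzero indices $\{i_k\}$, equals
\[
D = \#\{k < \ell : \epsilon_k = +1,\, \epsilon_\ell = -1\} - \#\{k < \ell : \epsilon_k = -1,\, \epsilon_\ell = +1\} \eqqcolon I - J.
\]

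Third, since every pair $(k,\ell)$ with $k<\ell$ and $\epsilon_k \ne \epsilon_\ell$ is counted in exactly one of $I$ or $J$, we have $I + J = d_1 d_2$, hence $D = 2I - d_1 d_2$. The set of values of $I$ as $\epsilon$ varies over all sequences with $d_1$ many $+1$'s and $d_2$ many $-1$'s is precisely $\{0, 1, \dots, d_1 d_2\}$: the extremes $I = 0$ and $I = d_1 d_2$ are achieved by $(-1,\dots,-1,+1,\dots,+1)$ and $(+1,\dots,+1,-1,\dots,-1)$, and any adjacent transposition changes $I$ by exactly $\pm 1$, so every intermediate integer is realized. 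Hence $D$ ranges over $\{-d_1 d_2, -d_1 d_2 + 2, \dots, d_1 d_2 - 2, d_1 d_2\}$, as claimed.

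There is no real obstacle: the main conceptual step is recognizing that $D$ is (twice) an inversion statistic on a $\pm 1$-word, after which both the $\bmod 2$ parity and the surjectivity onto the arithmetic progression are standard.
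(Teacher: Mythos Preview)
Your proof is correct and follows essentially the same approach as the paper: both reduce to the shortest-length flag, parameterize splittings by subsets (equivalently, $\pm 1$-sequences), identify the extremes $\pm d_1 d_2$, and show that an adjacent swap changes the difference by exactly $2$. Your framing via the inversion count $I$ and the identity $I+J=d_1d_2$ (giving $D=2I-d_1d_2$) is a slightly slicker packaging of the same computation the paper carries out directly.
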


\begin{proof}
  Note that if $V_i^k = V_i^{k+1}$ for $i = 1,2$ and some $k$, then by
  \eqref{eq:quiver-Ext} we can remove the $k$-th step from both flags
  without affecting $\dim \Ext_Q(V_1^\bullet, V_2^\bullet)$ or $\dim
  \Ext_Q(V_2^\bullet, V_1^\bullet)$. So, without loss of generality,
  $V^\bullet$ has the shortest possible length $d \coloneqq \dim
  V^\bullet$. Then it is convenient to use the bijection
  \begin{align*}
    \Split(V^\bullet; d_1, d_2) &\xrightarrow{\sim} \{I \subset \{1,2,\ldots,d\} : |I| = d_1\} \\
    (V_1^\bullet, V_2^\bullet) &\mapsto \{i : V_1^i \neq V_1^{i+1}\}.
  \end{align*}
  Let $v_i^k \coloneqq \dim V_i^k$, and write the quantity of interest
  as $\ext_Q^1(I, +) - \ext_Q^1(I, -)$ where
  \[ \ext^1_Q(I, +) \coloneqq \sum_i v_1^i v_2^{i+1}, \quad \ext^1_Q(I, -) \coloneqq \sum_i v_2^iv_1^{i+1}. \]
  It is straightforward that, for $I_{\max} \coloneqq \{1, 2, \ldots,
  d_1\}$ and $I_{\min} = \{d_1+1, d_1+2, \ldots, d\}$,
  \begin{alignat*}{3}
    \ext^1_Q(I_{\text{max}}, +) &= d_1d_2,\quad &\ext^1_Q(I_{\max}, -) &= 0 \\
    \ext^1_Q(I_{\text{min}}, +) &= 0,\quad &\ext^1_Q(I_{\min}, -) &= d_1d_2.
  \end{alignat*}
  These result in the maximum and minimum values $\pm d_1d_2$. For the
  intermediate values, suppose $I$ and $I'$ differ by replacing an
  element $k$ by $k+1$. This amounts to $(v_1')^{k+1} = v_1^{k+1} -
  1$, and consequently $(v_2')^{k+1} = v_2^{k+1} + 1$, while all other
  dimensions remain unchanged, so
  \[ \ext^1_Q(I', +) = \ext^1_Q(I, +) + v^k_1 - v_2^{k+2}, \quad \ext^1_Q(I', -) = \ext^1_Q(I, -) + v_1^{k+2} - v_2^k. \]
  Since $|I| = |I'| = d_1$, it must be that $v_i^{k+2} - v_i^k = 1$
  for $i = 1, 2$.
\end{proof}

\subsection{}

\begin{corollary} \label{cor:sheaves-wall-spin}
  If $Y$ is a smooth projective surface whose canonical bundle admits
  a square root, then the hypothesis of Theorem~\ref{thm:sheaves-wall}
  is satisfied for $N = 2$.
\end{corollary}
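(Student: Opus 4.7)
The plan is to verify the hypothesis of Theorem~\ref{thm:sheaves-wall} for $N=2$ by splitting the required congruence into a \emph{sheaf-side} piece and a \emph{flag-side} piece, and handling each with one of the two supporting lemmas. After rearranging, what must be shown is that
\[
  \bigl(\dim\Ext_Q(V_1^\bullet, V_2^\bullet) - \dim\Ext_Q(V_2^\bullet, V_1^\bullet)\bigr) \equiv \bigl(\dim\Ext_Y(E_1, E_2) - \dim\Ext_Y(E_2, E_1)\bigr) \pmod{2}
\]
for every splitting \eqref{eq:full-flag-splitting} appearing in any auxiliary wall-crossing.

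First, I would dispose of the right-hand side using Lemma~\ref{lem:surface-Ext-parity}\ref{lem:surface-Ext-parity:spin}: since $Y$ is a smooth projective surface and $\cK_Y$ admits a square root, $\dim\Ext_Y(E_1, E_2)$ and $\dim\Ext_Y(E_2, E_1)$ have the same parity, so the right-hand side is $\equiv 0 \pmod 2$.

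Next, I would analyze the left-hand side via Lemma~\ref{lem:interaction-locus-full-flag-parity}, which describes the full set of possible values of $\dim\Ext_Q(V_1^\bullet, V_2^\bullet) - \dim\Ext_Q(V_2^\bullet, V_1^\bullet)$ as $\{-d_1 d_2, -d_1 d_2 + 2, \ldots, d_1 d_2\}$, where $d_i \coloneqq \dim V_i^\bullet = \dim\Fr(E_i)$. Every element of this set has the same parity as $d_1 d_2$, so the left-hand side is $\equiv d_1 d_2 \pmod 2$.

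The remaining step is therefore to arrange that $d_1 d_2 \equiv 0 \pmod 2$, i.e. at least one framing dimension is even. I would achieve this by a mild modification of the framing functor of \S\ref{sec:mochizuki-auxiliary-category}: instead of the standard Mochizuki choice $\Fr(E) \coloneqq H^0(E \otimes \cO_Y(m))$ for $m \gg 0$, take the doubled variant $\Fr(E) \coloneqq H^0(E \otimes \cO_Y(m))^{\oplus 2}$. Doubling preserves exactness and the injectivity of $\Hom(E, E) \hookrightarrow \Hom(\Fr(E), \Fr(E))$, so this is still a valid framing functor in the sense of \eqref{eq:framing-functor}, and with it every $d_i$ is automatically even.

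The main obstacle I anticipate is making sure this substitution of framing functor is truly transparent to the enhanced master space construction of \cite{Mochizuki2009, Joyce2021} --- but since that construction only uses the abstract properties of $\Fr$ (exactness plus injectivity on endomorphisms), this verification should be essentially formal.
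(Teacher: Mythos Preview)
Your proposal is correct and follows essentially the same argument as the paper: split the congruence into the sheaf-side piece (handled by Lemma~\ref{lem:surface-Ext-parity}\ref{lem:surface-Ext-parity:spin}) and the flag-side piece (handled by Lemma~\ref{lem:interaction-locus-full-flag-parity}), then double the framing functor to force $d_1 d_2$ to be even. The paper's proof is terser and only explicitly cites the flag lemma, leaving the invocation of the surface $\Ext$-parity lemma implicit; your write-up spells out both halves, which is arguably clearer.
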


Lemma~\ref{lem:interaction-locus-full-flag-parity} shows that there is
no way to generalize this to $N > 2$, in contrast to e.g.
Theorem~\ref{thm:sheaves-simple-wall} in the setting of
Lemma~\ref{lem:surface-Ext-parity:CY}.

\begin{proof}
  By Lemma~\ref{lem:interaction-locus-full-flag-parity}, it suffices
  to ensure that $\dim(V_1^\bullet) \dim(V_2^\bullet) = \dim \Fr(E_1)
  \dim \Fr(E_2)$ is always even. This can be done by replacing the
  framing functor $\Fr$ in the wall-crossing machinery with
  $\Fr^{\oplus 2}$.
\end{proof}

\subsection{}

\begin{remark} \label{rem:auxiliary-obstruction-parity}
  We expect that the $N=2$ restriction of
  Corollary~\ref{cor:sheaves-wall-spin} is an artefact of the choice
  of auxiliary category $\tilde{\cat{A}}^{\Fr}$, rather than an
  intrinsic limitation. Namely, for any given $N \ge 2$, one may
  speculate that there exist auxiliary categories $\tilde{\cat{A}}_N$
  which work equally well for wall-crossing, for which the
  contribution from the ``auxiliary'' part of the obstruction theory,
  i.e. the dimensions in
  Lemma~\ref{lem:interaction-locus-full-flag-parity}, are all $0
  \bmod{N}$ instead of merely $0 \bmod{2}$.

  For example, the Calabi--Yau case of
  Theorem~\ref{thm:moduli-of-sheaves-invariance} should hold {\it
    without} the assumption that the wall is simple. Evidence for this
  includes the fact that if $Y$ is a K3 or abelian surface, $\alpha$
  is primitive (and not too small), and $\{\sigma_\xi\}_\xi$ is a
  general family of Gieseker stability conditions, then
  $\fM_\alpha^\sst(+)$ is deformation-equivalent to
  $\fM_\alpha^\sst(-)$ \cite[Theorems 0.1 and 8.1]{Yoshioka2001} and
  therefore their elliptic genera are equal.
\end{remark}

\section{The virtual chiral version}
\label{sec:virtual-chiral}

\subsection{}

Let $X$ be a proper scheme, and, instead of assuming $X$ is smooth,
assume the weaker condition:
\begin{enumerate}
\item\label{it:symmetric-POT} $X$ has a $\tilde\sT$-equivariant
  perfect obstruction theory \cite{Behrend1997}, obtained from a
  perfect complex\footnote{One typically assumes that $\bE$ admits a
    two-term resolution by vector bundles. However, recent technical
    advances \cite[\S 5]{Aranha2022} suggest that this condition is
    unnecessary, so we do not worry about it.} $\bE \in
  D^b\cat{Coh}_{\sT}(X)$ which satisfies
  \begin{equation} \label{eq:symmetric-POT}
    \bE = y \otimes \bE^\vee[1]
  \end{equation}
  for some non-trivial $\tilde\sT$-weight $y$; we say $\bE$ is {\it
    equivariantly symmetric}.
\end{enumerate}
Then $X$ has a virtual structure sheaf $\cO^\vir_X$ and virtual
tangent bundle $\cT_X^\vir$ \cite{Kontsevich1995,
  Ciocan-Fontanine2009}, both elements of $K_{\tilde\sT}(X)$. Let
$\cK_\vir \coloneqq \det(\cT_X^\vir)^\vee$ be the virtual canonical.

The notation $\tilde\sT$, instead of $\sT$, is a reminder that $y$ is
now an equivariant weight as opposed to a formal variable. For
instance, if $X$ is smooth with $\sT$-action, then let $\bC^\times$
scale fibers of the cotangent bundle $Y \coloneqq T^*X$ with weight
$y$ and view $X$ as its zero section. Then $X$ satisfies
\ref{it:symmetric-POT} with $\tilde\sT \coloneqq \sT \times
\bC^\times$ and $\bE = (\cT_Y^\vee - y \cT_Y)|_X$ in K-theory.

\subsection{}

Whenever a square root $\cK_\vir^{1/2}$ exists, following \cite[\S
  3.1]{Nekrasov2016} let
\[ \hat\cO_X^\vir \coloneqq \cO_X^\vir \otimes \cK_\vir^{1/2} \in K_{\tilde\sT}(X) \]
After possibly passing to a double cover of $\tilde\sT$ so that
$y^{1/2}$ exists, the {\it virtual chiral elliptic genus} of $X$ is
\[ \sE^{\vir/2}_{-y}(X) \coloneqq \chi\left(X, \frac{\hat\cO_X^\vir}{\Phi(\cT_X^\vir)\Phi((\cT_X^{\vir})^\vee)}\right) \in K_{\tilde\sT}(\pt)\pseries*{q}, \]
cf. \cite{Fasola2021}. The expression $\sE^{\vir/2}_{-y_0}(X)$, for
$y_0 \in \bC^\times$, means the specialization of
$\sE^{\vir/2}_{-y}(X)$ to $y = y_0$. Since $X$ is proper, the only
poles in $y$ are at $0$ and $\infty$, so this specialization is always
well-defined.

The deformation invariance of virtual cycles immediately implies the
deformation invariance of $\sE^{\vir/2}_{-y}(X)$.

\subsection{}

\begin{remark}
  In \cite[\S 8.1]{Fasola2021}, virtual chiral elliptic genus was
  defined without requiring the equivariant symmetry
  \eqref{eq:symmetric-POT}, and without inserting $\cK_\vir^{1/2}$. In
  that generality, it has no hope of being a truly elliptic class (see
  \S\ref{sec:elliptic-genus-is-elliptic}) and will not have any of the
  nice wall-crossing properties considered in this paper.
\end{remark}

\subsection{}

\begin{remark} \label{rem:APOTs}
  The perfect obstruction theory in condition~\ref{it:symmetric-POT}
  is really only used to construct $\cO^\vir_X$ and $\cT^\vir_X$.
  There are many weaker, more local notions that also suffice, with
  obvious analogues of the equivariant symmetry
  \eqref{eq:symmetric-POT}:
  \begin{enumerate}
  \item weak perfect obstruction theories (equivalent to complex
    Kuranishi structures) \cite{Oh2024};
  \item almost perfect obstruction theories \cite{Kiem2020a};
  \item semi-perfect obstruction theories \cite{Chang2011}.
  \end{enumerate}
  These are ordered such that (i)$\implies$(ii)$\implies$(iii), and it
  is known that virtual localization holds at least for (ii)
  \cite{Kiem2020}. The content of this section therefore holds at the
  level of (ii).
\end{remark}

\subsection{}
\label{sec:virtual-master-space}

For wall-crossing with virtual chiral elliptic genus, we assume the
$\tilde\sT$-equivariant master space $M$ is a proper
scheme\footnote{Like in footnote~\ref{footnote:properness}, more
  generally $M$ can be a Deligne--Mumford stack satisfying the weaker
  properness condition of Remark~\ref{rem:master-space-proper}.}
satisfying \ref{it:symmetric-POT} for an action of $\tilde\sT \times
\sS$ where $\sS = \bC^\times$, and the $\sS$-fixed locus is a disjoint
union of the following $\tilde\sT$-invariant pieces (cf.
\S\ref{sec:wall-crossing-setup}):
\begin{enumerate}
\item $\iota_\pm\colon Z_\pm \hookrightarrow M$ with
  $\cN_{\iota_\pm}^\vir = \cL_\pm - y^{-1} \cL^\vee$ for line bundles
  $\cL_\pm$ of $\sS$-weights $s^{\pm 1}$;
\item other proper component(s) $\iota_0\colon Z_0 \hookrightarrow M$
  with
  \[ \cN_{\iota_0}^\vir = \cN_0^{\vir/2} - y^{-1} (\cN_0^{\vir/2})^\vee \]
  for some virtual bundle $\cN_0^{\vir/2}$.
\end{enumerate}
Here $\cN_f^\vir$ denotes the {\it virtual} normal bundle of the
closed embedding $f$, namely the $\sS$-moving part of the restriction
$f^*\cT_M^\vir$.

To emphasize, unlike in the non-virtual setting, $M$ does not need to
be actually smooth (cf. Remark~\ref{rem:master-space-smooth}).

\subsection{}

\begin{theorem}[Virtual analogue of Theorem~\ref{thm:invariance}] \label{thm:virtual-invariance}
  Suppose $\cN_0^{\vir/2}\big|_{Z_0^\sT} = \cE_+ \oplus \cE_-$ only
  has pieces of $\sS$-weight $s^{\pm 1}$, and
  \begin{equation} \label{eq:virtual-wall-crossing-rank-condition}
    \rank \cE_+ \equiv \rank \cE_- \bmod{N}
  \end{equation}
  for some integer $N > 0$. Then, for any $N$-th root of unity
  $\zeta_N \neq 1$,
  \[ \sE^{\vir/2}_{-\zeta_N}(Z_+) = \sE^{\vir/2}_{-\zeta_N}(Z_-). \]
\end{theorem}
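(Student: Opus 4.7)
The proof mirrors that of Theorem~\ref{thm:invariance} in \S\ref{sec:master-space-calculation}, with K-theoretic virtual localization \cite{Ciocan-Fontanine2009} (valid already under the weaker almost perfect obstruction theories of Remark~\ref{rem:APOTs}) replacing ordinary localization, and $\hat\cO_M^\vir$ playing the role of the structure sheaf of $M$. The plan is to apply $\res$ to the virtual localization identity for $\chi(M, \cF)$, where
\[ \cF \coloneqq \frac{\hat\cO_M^\vir}{\Phi(\cT_M^\vir) \Phi((\cT_M^\vir)^\vee)}. \]
Unlike the naive virtual integrand $\Theta(y\cT_M^\vir)/[\Phi(\cT_M^\vir)\Phi((\cT_M^\vir)^\vee)]$ whose numerator is problematic (Remark~\ref{rem:master-space-smooth}), each $q$-coefficient of $\Phi^{\pm 1}$ applied to a virtual bundle is polynomial in exterior and symmetric powers of its pieces, so $\cF \in K_{\tilde\sT \times \sS}(M)\pseries*{q}$ lies in the non-localized K-group. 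Properness of $M$ --- or its weakening in Remark~\ref{rem:master-space-proper} via Lemma~\ref{lem:pole-cancellation} --- then places $\chi(M, \cF)$ in $K_{\tilde\sT}(\pt)[y^{\pm 1/2}]\pseries*{q} \otimes_\bZ K_{\sS}(\pt)$, whence $\res \chi(M, \cF) = 0$ by axiom~\ref{it:residue-map-axiom-i}.

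Next I would expand $\chi(M, \cF)$ via virtual localization into contributions from $Z_+$, $Z_-$, and $Z_0$, and compute $\res$ of each. For $Z_\pm$, the splittings
\[ \cT_M^\vir\big|_{Z_\pm} = \cT_{Z_\pm}^\vir + \cL_\pm - y^{-1}\cL_\pm^\vee, \qquad \cK_\vir(M)^{1/2}\big|_{Z_\pm} = \cK_\vir(Z_\pm)^{1/2} \otimes y^{1/2}\cL_\pm, \]
combined with multiplicativity of $\Phi$, the identity $\vartheta(z) = (1-z^{-1})\phi(qz)\phi(qz^{-1})$, and the K-theoretic Euler factor $\wedge_{-1}^\bullet(\cN^\vir_{\iota_\pm})^\vee = (1-\cL_\pm^{-1})/(1-y\cL_\pm)$, reduce the $Z_\pm$ integrand to an $s$-dependent piece of shape $\vartheta(y\cL_\pm)/\vartheta(\cL_\pm)$ times the $s$-independent integrand of $\sE^{\vir/2}_{-y}(Z_\pm)$. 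Example~\ref{ex:residue-simple-pole} then gives
\[ \res \chi\!\left(Z_\pm, \frac{\cF|_{Z_\pm}}{\wedge_{-1}^\bullet(\cN^\vir_{\iota_\pm})^\vee}\right) = \mp \frac{\vartheta(y)}{\phi(1)^2}\, \sE^{\vir/2}_{-y}(Z_\pm). \]
For $Z_0$, the same manipulation using $\cN^\vir_{\iota_0}|_{Z_0^\sT} = \cE_+ + \cE_- - y^{-1}(\cE_+^\vee + \cE_-^\vee)$ extracts the $s$-dependent integrand as exactly $\Theta(y(\cE_+ + \cE_-))/\Theta(\cE_+ + \cE_-)$, up to $\sS$-trivial factors. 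Specializing $y = \zeta_N$, the residue vanishes by the virtual analogue of Proposition~\ref{prop:residue-vanishing} promised in \S\ref{sec:virtual-rigidity}, invoking \eqref{eq:virtual-wall-crossing-rank-condition}.

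Summing the three contributions yields
\[ 0 = \frac{\vartheta(\zeta_N)}{\phi(1)^2}\bigl(\sE^{\vir/2}_{-\zeta_N}(Z_-) - \sE^{\vir/2}_{-\zeta_N}(Z_+)\bigr), \]
and since $\vartheta(\zeta_N) \ne 0$ for $\zeta_N \ne 1$, the conclusion follows. The main obstacle I anticipate is the bookkeeping in the $Z_\pm$ calculation: verifying that $\cK_\vir^{1/2}$ behaves correctly under virtual pullback along $\iota_\pm$ (this is where the equivariant symmetry enters, both to ensure $\cK_\vir^{1/2}$ is a well-defined K-theory class and to produce the symmetric form $\cN^\vir_{\iota_\pm} = \cL_\pm - y^{-1}\cL_\pm^\vee$), and that the many $\Phi$-factors telescope cleanly into the definition of $\sE^{\vir/2}_{-y}(Z_\pm)$.
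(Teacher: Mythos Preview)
Your proposal is correct and follows essentially the same route as the paper: virtual localization with the integrand $\cF = \hat\cO_M^\vir / [\Phi(\cT_M^\vir)\Phi((\cT_M^\vir)^\vee)]$, the $Z_\pm$ contributions reducing via Example~\ref{ex:residue-simple-pole} to $\sE^{\vir/2}_{-y}(Z_\pm)$, and the $Z_0$ residue handled by the virtual extension of Proposition~\ref{prop:residue-vanishing} in \S\ref{sec:virtual-rigidity}. The only discrepancy is a harmless overall factor of $y^{1/2}$ in the wall-crossing formula (the paper obtains $y^{1/2}\vartheta(y)/\phi(1)^2$ rather than $\vartheta(y)/\phi(1)^2$), arising from exactly the $\cK_\vir^{1/2}$ bookkeeping you anticipated; it does not affect the conclusion.
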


Here $\cE_\pm$ may be {\it virtual} vector bundles, i.e. of the form
$\cE_\pm^1 - \cE_\pm^2$ for genuine vector bundles $\cE_\pm^1$ and
$\cE_\pm^2$.

\subsection{}

\begin{proof}[Proof of Theorem~\ref{thm:virtual-invariance}.]
Since all steps are analogous to the proof of
Theorem~\ref{thm:invariance}, we only indicate what needs to be
modified.
  
All wall-crossing considerations from \S\ref{sec:wall-crossing}
continue to hold, with the following minor adjustments:
\begin{itemize}
\item in \S\ref{sec:master-space-calculation}, the {\it virtual}
  localization formula \cite{Graber1999} is used to obtain
  \eqref{eq:master-space-localization};
\item in \S\ref{sec:elliptic-genus-wcf}, the integrand is $\cF =
  \hat\cO_M^\vir/\Phi(\cT_M^\vir)\Phi((\cT_M^\vir)^\vee)$;
\item in \S\ref{sec:elliptic-genus-wcf}, the resulting wall-crossing
  formula \eqref{eq:elliptic-genus-WCF} is
  \[ 0 = \frac{y^{\frac{1}{2}}\vartheta(y)}{\phi(1)^2} \left(\sE^{\vir/2}_{-y}(Z_-) - \sE^{\vir/2}_{-y}(Z_+)\right) + \chi\left(Z_0^\sT, \cdots \otimes \res \frac{(\det \cN_{\iota_0}^\vir)^{-\frac{1}{2}}}{\Theta(\cN_{\iota_0}^\vir)}\right). \]
\end{itemize}
To compare with \eqref{eq:elliptic-genus-WCF} more closely, note that
\[ \frac{(\det \cN_{\iota_0}^\vir)^{-\frac{1}{2}}}{\Theta(\cN_{\iota_0}^\vir)} = y^{\frac{1}{2} \rank \cN_0^{\vir/2}}\frac{\Theta(y\cN_0^{\vir/2})}{\Theta(\cN_0^{\vir/2})}. \]

\subsection{}
\label{sec:virtual-rigidity}

It remains to explain why Theorem~\ref{prop:residue-vanishing}
continues to hold for when $\cE_\pm$ are allowed to be virtual vector
bundles. If $\{a_i\}_i$ and $\{b_j^{-1}\}$ are the Chern roots of
$\cE_+^1$ and $\cE_-^1$ respectively, like in
\S\ref{sec:interaction-residue-chern-roots}, and $\{a_k'\}_k$ and
$\{(b_l')^{-1}\}_l$ are the Chern roots of $\cE_+^2$ and $\cE_-^2$
respectively, then the contour integral of interest is
\[ \oint_{|s| \approx 1} \prod_i \frac{\vartheta(ysa_i, \tau)}{\vartheta(sa_i, \tau)} \prod_j \frac{\vartheta(ys^{-1}b_j^{-1}, \tau)}{\vartheta(s^{-1}b_j^{-1}, \tau)} \prod_k \frac{\vartheta(sa_k', \tau)}{\vartheta(ysa_k', \tau)} \prod_l \frac{\vartheta(s^{-1}(b_l')^{-1}, \tau)}{\vartheta(ys^{-1}(b_l')^{-1}, \tau)} \,\frac{ds}{s}. \]
Upon the change of variables $a_k'' \coloneqq y a_k'$ and $b_l''
\coloneqq y b_l'$, the result is exactly the original contour integral
\eqref{eq:interaction-residue-1} using the variables $\{a_i\}_i \cup
\{b_l''\}_l$ and $\{(a_k'')^{-1}\}_k \cup \{b_j^{-1}\}_j$. Note that
$|a_k'| = |a_k''|$ and $|b_l'| = |b_l''|$ since $y$ is eventually
specialized to a root of unity, so, analytically, the new variables
$\{a_k''\}_k$ and $\{b_l''\}_l$ may be treated the same way as the old
variables $\{a_i\}_i$ and $\{b_j\}_j$. Hence the remainder of the
proof of Theorem~\ref{prop:residue-vanishing} can proceed in exactly
the same way.

This concludes the proof of Theorem~\ref{thm:virtual-invariance}.
\end{proof}

\section{Example: Donaldson--Thomas theory}
\label{sec:donaldson-thomas}

\subsection{}

Let $Y$ be a quasi-projective Calabi--Yau $3$-fold acted on by a torus
$\tilde\sT$ such that
\[ \cK_Y = y \otimes \cO_Y \]
for a $\tilde\sT$-weight $y$, and the $\tilde\sT$-fixed locus in $Y$
is proper. For instance, $Y$ could be toric and $\tilde\sT =
(\bC^\times)^3$ the standard torus, or, more generally, $Y$ can be a
local curve or surface.

We work in the setup of \S\ref{sec:moduli-wall-crossing}, denoting the
stability condition by $\sigma$, but using the moduli substack
\begin{equation} \label{eq:DT-fixed-determinant-moduli-stack}
  \fN_\alpha^\sst(\sigma) \coloneqq \{\det E = \cL\} \subset \fM_\alpha^\sst(\sigma)
\end{equation}
of fixed-determinant objects. Here the line bundle $\cL$ must be
chosen such that $c_1(\cL)$ agrees with the $H^2(Y)$ component of
$\alpha \in H^*(Y)$. Such moduli stacks $\fN$ are part of the {\it
  Donaldson--Thomas (DT) theory} of $Y$, whose characterizing property
is the existence of an equivariantly-symmetric perfect obstruction
theory given at the point $[E]$ by
\begin{equation} \label{eq:DT-obstruction-theory}
  \bE^\vee[-1]\Big|_{[E]} = \Ext_Y(E, E)_0,
\end{equation}
where the subscript $0$ denotes trace-less part \cite[Theorem
  3.30]{Thomas2000}. We will assume that $\rank \alpha > 0$ until
\S\ref{sec:vafa-witten-setup}.

\subsection{}

Let $\sigma$ be a stability condition with no strictly semistable
objects in class $\alpha$. The moduli stack $\fN_\alpha^\sst(\sigma)$
is an algebraic space (see \S\ref{sec:moduli-smooth-and-proper}), but
is generally not proper since $Y$ is not proper. So, throughout this
section, we assume the following analogue of
\ref{it:proper-moduli-stack}:
\begin{enumerate}
\item \label{it:DT-moduli-stack-Ty-proper} the $\sT_y$-fixed
  locus in $\fN_\alpha^\sst(\sigma)$ is proper, where $\sT_y
  \subset \ker(y)$ is the maximal torus.
\end{enumerate}
Since $\sT_y \subset \tilde\sT$, this implies:
\begin{enumerate}[resume]
\item \label{it:DT-moduli-stack-T-proper} the $\tilde\sT$-fixed locus
  in $\fN_\alpha^\sst(\sigma)$ is proper.
\end{enumerate}
But \ref{it:DT-moduli-stack-Ty-proper} is a stronger assumption, and
we really need its full strength to study wall-crossing for elliptic
DT invariants (Definition~\ref{def:elliptic-DT-invariant}).

In practice, properness of the $\sT_y$-fixed locus in $Y$ is
usually enough to imply \ref{it:DT-moduli-stack-Ty-proper}. This is a
much more manageable condition. For instance, if $Y$ is toric, it is
equivalent to the condition that non-compact edges in its toric
$1$-skeleton cannot have $\tilde\sT$-weight $y^k$ for any $k \in \bZ$
(but weights $y^k w$ for non-trivial $w$ are allowed).

\subsection{}

\begin{definition}\label{def:elliptic-DT-invariant}
  Let $\alpha \in H^*(Y)$ and $\sigma$ be a stability condition with
  no strictly semistable objects in $\cat{A}$ of class $\alpha$.
  Assume~\ref{it:DT-moduli-stack-T-proper}. Then the {\it elliptic DT
    invariant} is
  \begin{equation} \label{eq:elliptic-DT-invariant}
    \DT_{-y}^{\Ell/2}(\alpha; \sigma) \coloneqq \sE^{\vir/2}_{-y}(\fN_\alpha^\sst(\sigma)) \in K_{\tilde\sT}(\pt)_{\loc}\pseries*{q},
  \end{equation}
  where the virtual chiral elliptic genus is defined by
  $\tilde\sT$-equivariant localization, i.e. as the right hand side of
  \eqref{eq:elliptic-genus-localized-general}. Consequently the result
  lives in the {\it localized} K-group, and in particular it may have
  non-trivial poles in $\{|y| = 1\}$. But if we further assume
  \ref{it:DT-moduli-stack-Ty-proper}, then no such poles in $y$ exist
  by Lemma~\ref{lem:pole-cancellation}, and $y$ may be specialized to
  any root of unity.
\end{definition}

\subsection{}

\begin{remark} \label{rem:reduced-obstruction}
  In \eqref{eq:DT-fixed-determinant-moduli-stack},
  $\fN_\alpha^{\sst}(\sigma)$ has obstruction theory given by the
  traceless $\Ext_Y(E, E)_0$ while $\fM_\alpha^{\sst}(\sigma)$ has
  obstruction theory given by $\Ext_Y(E, E)$. If $H^1(\cO_Y) \neq 0$,
  the latter contains trivial summands $H^1(\cO_Y)$ and its dual
  $H^2(\cO_Y) = y^{-1} H^1(\cO_Y)^\vee$, and therefore all enumerative
  invariants of $\fM_\alpha^{\sst}(\sigma)$ vanish. This is why we
  generally work with $\fN_\alpha^{\sst}(\sigma)$.

  However, the discrepancy between $\Ext$ and traceless $\Ext$ does
  not affect the ``off-diagonal'' terms $\Ext_Y(E_1, E_2)$ and
  $\Ext_Y(E_2, E_1)$ appearing in arguments below. This is true of
  many reductions one may want to perform on the obstruction theory,
  e.g. \cite[\S 1.6]{Tanaka2020} in Vafa--Witten theory.
\end{remark}

\subsection{}

\begin{theorem} \label{thm:DT-wall}
  Assume \ref{it:DT-moduli-stack-Ty-proper} and use the notation of
  \S\ref{sec:moduli-of-sheaves}.
  \begin{enumerate}
  \item (Simple wall) Assume \ref{it:simple-wall}. If, for all $[E_1
    \oplus E_2] \in Z_0^{\tilde\sT}$,
    \[ \dim \Ext_Y(E_1, E_2) \equiv 0 \bmod{N} \]
    for some integer $N > 0$, then $\DT_{-\zeta_N}^{\Ell/2}(\alpha; +) =
    \DT_{-\zeta_N}^{\Ell/2}(\alpha; -)$.
    
  \item\label{thm:DT-wall:it:2} (General wall) If, for all splittings
    \eqref{eq:full-flag-splitting} appearing in all auxiliary
    wall-crossings,
    \[ \dim \Ext_Y(E_1, E_2) \equiv \dim \Ext_Q(V_1^\bullet, V_2^\bullet) - \dim \Ext_Q(V_2^\bullet, V_1^\bullet) \bmod{N} \]
    for some integer $N > 0$, then
    $\DT_{-\zeta_N}^{\Ell/2}(\alpha; +) = \DT_{-\zeta_N}^{\Ell/2}(\alpha; -)$.
  \end{enumerate}
\end{theorem}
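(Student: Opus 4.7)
My plan is to reduce the theorem to the virtual wall-crossing Theorem~\ref{thm:virtual-invariance} applied to an appropriate master space: a virtual analogue of the Thaddeus-style master space of \S\ref{sec:simple-wall} in case (i), and the virtual enhanced master space of \S\ref{sec:mochizuki-auxiliary-category} in case (ii). The Calabi--Yau 3-fold hypothesis combined with the DT obstruction theory \eqref{eq:DT-obstruction-theory} equips the ambient moduli with an equivariantly symmetric POT of weight $y$; this should descend (in case (ii), after incorporating a suitable CY3 completion of the framing data) to an equivariantly symmetric POT on the master space in the sense of \S\ref{sec:virtual-master-space}. Since $M$ is not globally proper when $Y$ is not, I would invoke Remark~\ref{rem:master-space-proper}: the hypothesis \ref{it:DT-moduli-stack-Ty-proper} ensures properness of the $\sT_w$-fixed loci in $M$ for any $(\tilde\sT \times \sS)$-weight $w$ with non-trivial $\sS$-component, and Lemma~\ref{lem:pole-cancellation} then cancels the corresponding poles.

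The key computation is to identify $\cN_0^{\vir/2}$ at a split $\tilde\sT$-fixed point. In case (i), the virtual analogue of \eqref{eq:interaction-locus-normal-bundle} at $[E_1 \oplus E_2] \in Z_0^{\tilde\sT}$ reads
\[ \cN_{\iota_0}^\vir = -s^{-1}\Ext_Y(E_1, E_2) - s\Ext_Y(E_2, E_1), \]
and the CY3 Serre duality relation $\Ext_Y(E_2, E_1) = -y^{-1}\Ext_Y(E_1, E_2)^\vee$ exhibits this as $\cN_0^{\vir/2} - y^{-1}(\cN_0^{\vir/2})^\vee$ with the asymmetric choice $\cN_0^{\vir/2} = -s^{-1}\Ext_Y(E_1, E_2)$. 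Thus $\cE_+ = 0$ and $\cE_- = -\Ext_Y(E_1, E_2)$, so Theorem~\ref{thm:virtual-invariance}'s rank condition becomes exactly $\dim\Ext_Y(E_1, E_2) \equiv 0 \bmod{N}$. In case (ii), at $[(E_1 \oplus E_2, V_1^\bullet \oplus V_2^\bullet)]$ the quiver contributions of \eqref{eq:interaction-locus-enhanced-normal-bundle} do not themselves participate in CY3 Serre duality, so I expect them to be packaged entirely into $\cN_0^{\vir/2}$, giving
\[ \cN_0^{\vir/2} = s^{-1}\bigl[\Ext_Q(V_1^\bullet, V_2^\bullet) - \Ext_Y(E_1, E_2)\bigr] + s\Ext_Q(V_2^\bullet, V_1^\bullet). \]
Then $\rank \cE_- = \dim\Ext_Q(V_1^\bullet, V_2^\bullet) - \dim\Ext_Y(E_1, E_2)$ and $\rank \cE_+ = \dim\Ext_Q(V_2^\bullet, V_1^\bullet)$, and the rank condition of Theorem~\ref{thm:virtual-invariance} rearranges into the stated hypothesis.

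The main obstacle will be justifying that the enhanced master space carries an equivariantly symmetric POT with the above normal-bundle decomposition. Only the $Y$-part has a natural Serre-duality pairing, so the quiver/framing contributions must enter symmetrically via a CY3-completion absorbed into the POT, either as part of Joyce/Mochizuki's construction or by an auxiliary twist added by hand. The trace-free versus full $\Ext$ discrepancy is irrelevant here by Remark~\ref{rem:reduced-obstruction}, which also justifies working with the fixed-determinant moduli $\fN_\alpha^\sst(\sigma)$; this is why the hypotheses only constrain the off-diagonal Ext groups and not the diagonal ones.
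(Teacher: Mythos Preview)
Your plan is correct and matches the paper's proof essentially step for step, including the normal-bundle computations and the use of Remark~\ref{rem:master-space-proper} for non-properness. The ``main obstacle'' you flag --- equipping the enhanced master space with an equivariantly symmetric obstruction theory --- is resolved in the paper by \emph{symmetrized pullback} (citing \cite[\S 2]{Liu2023} and \cite[\S 2]{Kuhn2023}) along the smooth forgetful map $\pi\colon M \to \fN_\alpha^\sst(0)$, which yields an equivariantly symmetric \emph{almost perfect} obstruction theory (sufficient by Remark~\ref{rem:APOTs}) with $\cT_M^\vir = \pi^*\cT_{\fN_\alpha^\sst(0)}^\vir + (\cT_\pi - y^{-1}\cT_\pi^\vee)$; this is precisely the ``auxiliary twist added by hand'' you anticipated, and the properness of the $\tilde\sT_w$-fixed loci is verified by embedding into the master space over a $\tilde\sT$-equivariant compactification $\bar Y$.
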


This is the direct analogue of Theorems~\ref{thm:sheaves-simple-wall}
and \ref{thm:sheaves-wall}, but without requiring the strong
assumption~\ref{it:smooth-moduli-stack} on the smoothness of the
moduli spaces.

\subsection{}

\begin{remark} \label{rem:DT-wall-y}
  Suppose that the numerical conditions in Theorem~\ref{thm:DT-wall}
  hold for all $N > 0$, i.e. they are equalities instead of
  congruences mod $N$. Then one obtains equalities
  \[ \DT_{-y}^{\Ell/2}(\alpha; +) = \DT_{-y}^{\Ell/2}(\alpha; -) \]
  under only the weaker assumption~\ref{it:DT-moduli-stack-T-proper}
  which ensures both sides are well-defined. This is because both
  sides only have finitely many poles in $\{|y| = 1\}$, so their $y =
  \zeta_N$ specializations are well-defined for all $N \gg 0$.
  Coefficients of the $q$-series on both sides are rational functions
  of $y$, which are therefore equal if and only if they are equal at
  $y = \zeta_N$ for all $N \gg 0$.
\end{remark}

\subsection{}

\begin{proof}[Proof of Theorem~\ref{thm:DT-wall}.]
  In \cite[\S 10.6]{Joyce2021}, Joyce constructs a master space $M$,
  roughly the moduli space of triples $(E, \vec V, \vec \rho)$ where
  $[E] \in \fN_\alpha^{\sst}(0)$ and $(\vec V, \vec \rho)$ is a
  representation of a certain quiver. There is a forgetful morphism
  \[ \pi\colon M \to \fN_\alpha^\sst(0) \]
  which is smooth as a morphism of algebraic stacks. Then {\it
    symmetrized pullback} \cite[\S 2]{Liu2023} \cite[\S 2]{Kuhn2023}
  along $\pi$ of the equivariantly-symmetric obstruction theory on
  $\fN_\alpha^\sst(0)$ (not necessarily perfect!) results in a
  equivariantly-symmetric {\it almost perfect} obstruction theory on
  $M$. By Remark~\ref{rem:APOTs}, this suffices for wall-crossing.

  Although the almost perfect obstruction theory on $M$ can only be
  compared \'etale-locally to the (pullback along $\pi$ of the) original
  obstruction theory on $\fN_\alpha^\sst(0)$, there is a well-defined
  global virtual tangent bundle on $M$, satisfying
  \begin{equation} \label{eq:symmetrized-pullback-Tvir}
    \cT_M^\vir = \pi^*\cT_{\fN_\alpha^\sst(0)}^\vir + (\cT_\pi - y^{-1} \cT_\pi^\vee) \in K_{\tilde\sT}(M)
  \end{equation}
  where $\cT_\pi$ is the relative tangent complex of $\pi$. 

  \subsection{}

  We will apply Theorem~\ref{thm:virtual-invariance}. The analogue of
  \eqref{eq:interaction-locus-normal-bundle},
  \[ \cN_{\iota_0}^\vir\Big|_{[E_1 \oplus E_2]} = -s^{-1} \Ext_Y(E_1, E_2) - s \Ext_Y(E_2, E_1) \] 
  continues to hold. By Serre duality, in the setting of a simple wall,
  \[ \cN_0^{\vir/2}\Big|_{[E_1 \oplus E_2]} = -s^{-1} \Ext_Y(E_1, E_2). \]
  In the setting of a non-simple wall where the wall-crossing problem
  has been lifted to the auxiliary abelian category
  $\tilde{\cat{A}}^{\Fr}$, like in
  \S\ref{sec:mochizuki-auxiliary-category},
  \[ \cN_0^{\vir/2}\Big|_{[E_1 \oplus E_2, V_1^\bullet \oplus V_2^\bullet)]} = -s^{-1}\Ext_Y(E_1, E_2) + \left(s^{-1} \Ext_Q(V_1^\bullet, V_2^\bullet) + s \Ext_Q(V_2^\bullet, V_1^\bullet)\right) \]
  using \eqref{eq:symmetrized-pullback-Tvir}. This explains the
  numerical conditions in Theorem~\ref{thm:DT-wall}.

  \subsection{}
  \label{sec:master-space-Tw-proper}

  Finally, care is required when applying
  Theorem~\ref{thm:virtual-invariance}, because the master space $M$
  is not proper. We will use the argument in
  Remark~\ref{rem:master-space-proper} to work around this issue, by
  checking that all $\tilde\sT_w$-fixed loci of $M$ are proper, for
  maximal tori
  \[ \tilde\sT_w \subset \ker(w) \subset \tilde\sT \times \sS \]
  where $w$ is a $(\tilde\sT \times \sS)$-weight with non-trivial
  $\sS$-component. This is the same argument as in \cite[Lemma
    5.7]{Liu2023}, which we summarize for the sake of completeness.
  Take any $\tilde\sT$-equivariant compactification $\bar Y$ of $Y$.
  Let $\bar\fN$ denote the moduli stack $\fN$ but for objects on $\bar
  Y$, and similarly let $\bar M$ denote the master space for
  $\bar\fN$. It is known that $\bar M$ is proper. Consider the
  inclusions
  \[ M^{\tilde\sT_w} \subset \bar M^{\tilde\sT_w} \subset \bar M. \]
  The second inclusion is clearly closed. The first inclusion is also
  closed: on triples $(E, \vec V, \vec\rho)$ parameterized by $\bar
  M$, only $\tilde\sT \subset \tilde\sT_w$ acts on $E$, and so
  \[ M^{\tilde\sT_w} = \{(E, \vec V, \vec\rho) : \supp E \subset Y^{\tilde\sT} \subset \bar Y^{\tilde\sT}\} \]
  where $\supp$ means set-theoretic support. In other words, the
  $\tilde\sT_w$-fixed locus in $M$ is a collection of certain
  $\tilde\sT_w$-fixed components of $\bar M$. Closed subsets of proper
  spaces are proper.

  Applying Theorem~\ref{thm:virtual-invariance} concludes the proof.
\end{proof}

\subsection{}
\label{sec:vafa-witten-setup}

We give one explicit situation, {\it Vafa--Witten (VW) theory}
\cite{Tanaka2020}, in which the divisibility of $\Ext_Y(E_1, E_2)$,
required by Theorem~\ref{thm:DT-wall}, can be controlled. VW theory is
a form of DT theory when $Y = \tot(\cK_S)$ is an {\it equivariant
  local surface}, meaning:
\begin{itemize}
\item $S$ is a smooth projective surface acted on by a torus $\sT$;
\item $\tilde \sT \coloneqq \sT \times \bC^\times$ where $\bC^\times$
  acts by scaling the fibers of $\pi\colon Y \to S$ with weight
  $y^{-1}$.
\end{itemize}
Let $\fM$ be the moduli stack of {\it compactly-supported} coherent
sheaves on $Y$. By the spectral construction \cite[\S 2]{Tanaka2020},
a point $[\cE] \in \fM$ is equivalent to a pair $(\bar \cE, \phi)$
where
\[ \bar \cE = \pi_*\cE \in \cat{Coh}(S), \quad \phi \in \Hom_S(\bar \cE, \bar \cE \otimes \cK_S). \]

\begin{lemma}[{\cite[Proposition 2.14]{Tanaka2020}}]\label{lem:VW-Ext-parity}
  For $\cE_1, \cE_2 \in \fM$,
  \[ \Ext_Y(\cE_1, \cE_2) = \Ext_S(\bar \cE_1, \bar \cE_2) - y^{-1} \Ext_S(\bar \cE_2, \bar \cE_1)^\vee. \]
\end{lemma}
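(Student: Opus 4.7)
The plan is to use the spectral correspondence to resolve $\cE_1$ by a two-term complex of sheaves pulled back from $S$, apply $R\Hom_Y(-, \cE_2)$, convert the resulting $\Ext$ groups on $Y$ back to $\Ext$ groups on $S$ via the adjunction $\pi^* \dashv R\pi_*$, and finally invoke Serre duality on $S$ to rewrite one of the terms in dual form. The key technical input is that $\pi$ is affine --- indeed $Y = \tot(\cK_S) = \Spec_S \Sym^\bullet(\cK_S^{-1})$ --- so $R\pi_* = \pi_*$ on quasicoherent sheaves and all higher direct images vanish, which is what makes the computation collapse to two terms.

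Concretely, the spectral correspondence provides a short exact sequence
\[
0 \to \pi^*(\bar\cE_1 \otimes \cK_S^{-1}) \otimes y \xrightarrow{\tau - \pi^*\phi_1} \pi^*\bar\cE_1 \to \cE_1 \to 0
\]
on $Y$, where $\tau$ denotes multiplication by the tautological section of $\pi^*\cK_S$. This is the standard presentation of a module over $\pi_*\cO_Y = \Sym^\bullet(\cK_S^{-1})$, in which the pair $(\bar\cE_1, \phi_1)$ encodes the action of the generator. The $y$-twist on the kernel is dictated by $\bC^\times$-equivariance: the tautological section transforms with the opposite weight to the fiber coordinate, and this weight must be compensated so that both summands of $\tau - \pi^*\phi_1$ live in the same equivariant line. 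Applying $R\Hom_Y(-, \cE_2)$ and taking the alternating sum in K-theory gives
\[
\Ext_Y(\cE_1, \cE_2) = \Ext_S(\bar\cE_1, \bar\cE_2) - y^{-1}\, \Ext_S(\bar\cE_1, \bar\cE_2 \otimes \cK_S),
\]
where I use the adjunction $R\Hom_Y(\pi^*\cF, -) = R\Hom_S(\cF, R\pi_*(-))$ and the identity $R\pi_*\cE_2 = \pi_*\cE_2 = \bar\cE_2$.

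Finally, Serre duality on the smooth projective surface $S$ reads $\Ext_S^i(\bar\cE_1, \bar\cE_2 \otimes \cK_S) \cong \Ext_S^{2-i}(\bar\cE_2, \bar\cE_1)^\vee$, and taking signed sums (using $(-1)^{2-i} = (-1)^i$) collapses this to the K-theoretic identity $\Ext_S(\bar\cE_1, \bar\cE_2 \otimes \cK_S) = \Ext_S(\bar\cE_2, \bar\cE_1)^\vee$. Substituting yields the claimed formula. The main source of friction is bookkeeping the single factor of $y$: confirming its sign (namely $y^{-1}$ rather than $y$) against the convention that $\bC^\times$ scales fibers of $\pi$ with weight $y^{-1}$, equivalently $\cK_Y = y \otimes \cO_Y$, requires care. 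A clean consistency check is to specialize to skyscraper sheaves supported on the zero section, where the resolution degenerates to the Koszul resolution of $\iota_* \cO_S$ and all weights can be read off directly.
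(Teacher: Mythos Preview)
Your argument is correct. The paper does not actually supply a proof of this lemma; it simply cites \cite[Proposition 2.14]{Tanaka2020} and moves on. Your route --- resolving $\cE_1$ via the two-term spectral presentation on $Y$, applying $\pi^* \dashv R\pi_*$ with $R\pi_* = \pi_*$ by affineness, and then invoking Serre duality on $S$ --- is the standard proof and is presumably what the cited reference does. The weight bookkeeping you flag is the only delicate point, and your skyscraper check confirms the sign: for $\cE_1 = \cE_2 = \iota_*\cO_p$ one has $T_pY = T_pS \oplus y^{-1}(\cK_S)_p$, so $\Ext_Y = \wedge^\bullet_{-1}(T_pS)\,(1 - y^{-1}(\cK_S)_p)$, which agrees on the nose with $\Ext_S(\cO_p,\cO_p) - y^{-1}\Ext_S(\cO_p,\cO_p)^\vee$ after the identity $\Ext_S(\cO_p,\cO_p)^\vee = (\cK_S)_p \cdot \Ext_S(\cO_p,\cO_p)$.
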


\subsection{}

The VW moduli stack comes in two flavors. Fix a class $\alpha = (r,
c_1, c_2) \in \bZ_{>0} \oplus H^2(S) \oplus H^4(S)$.
\begin{itemize}
\item ($\mathrm{U}$) If $H^1(\cO_S) = H^2(\cO_S) = 0$, then define
  the moduli substack
  \[ \fN_\alpha \coloneqq \{\ch(\bar \cE) = \alpha\} \subset \fM. \]
\item ($\SU$) Otherwise, pick $\cL \in \Pic(S)$ with $c_1(\cL) = c_1$
  and define the moduli substack
  \[ \fN_\alpha \coloneqq \{\det \bar \cE = \cL, \; \tr \phi = 0, \, \ch(\bar\cE) = \alpha\} \subset \fM. \]
\end{itemize}
Then $\fN_\alpha$ has equivariantly-symmetric perfect obstruction
theory given by $\Ext_Y(\cE, \cE)$ or some reduction of it
\cite[Corollary 2.26, Theorem 5.46]{Tanaka2020}. Assuming
\ref{it:DT-moduli-stack-T-proper},
Definition~\ref{def:elliptic-DT-invariant} produces {\it elliptic VW
  invariants}
\[ \VW^{\Ell/2}_{-y}(\alpha; \sigma) \in K_{\tilde\sT}(\pt)_{\loc}\pseries*{q}. \]
Typically, $\sigma$ is Gieseker stability with respect to a choice of
ample line bundle, but more general $\sigma$ are permitted as long as
the properness assumption~\ref{it:DT-moduli-stack-Ty-proper} holds.

\subsection{}

\begin{corollary}\label{cor:VW-wall-spin}
  If $S$ is a smooth projective surface whose canonical bundle admits
  a square root, then the hypothesis of Theorem~\ref{thm:DT-wall:it:2}
  is satisfied for $N = 2$.
\end{corollary}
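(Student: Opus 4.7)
The plan is to show both sides of the congruence in Theorem~\ref{thm:DT-wall}\ref{thm:DT-wall:it:2} are individually $\equiv 0 \bmod{2}$, and then conclude by the same trick (replacing the framing functor $\Fr$ with $\Fr^{\oplus 2}$) that was used in Corollary~\ref{cor:sheaves-wall-spin}.

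First I would handle the left hand side. Given any $[\cE_1 \oplus \cE_2]$ appearing in a splitting \eqref{eq:full-flag-splitting}, Lemma~\ref{lem:VW-Ext-parity} gives
\[ \Ext_Y(\cE_1, \cE_2) = \Ext_S(\bar\cE_1, \bar\cE_2) - y^{-1} \Ext_S(\bar\cE_2, \bar\cE_1)^\vee \]
as $\tilde\sT$-equivariant K-theory classes, and taking virtual ranks yields
\[ \dim \Ext_Y(\cE_1, \cE_2) = \dim \Ext_S(\bar\cE_1, \bar\cE_2) - \dim \Ext_S(\bar\cE_2, \bar\cE_1). \]
Since $\cK_S$ admits a square root, Lemma~\ref{lem:surface-Ext-parity}\ref{lem:surface-Ext-parity:spin} immediately gives $\dim \Ext_Y(\cE_1, \cE_2) \equiv 0 \bmod{2}$.

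Next I would handle the right hand side, namely the quiver part $\dim \Ext_Q(V_1^\bullet, V_2^\bullet) - \dim \Ext_Q(V_2^\bullet, V_1^\bullet)$. By Lemma~\ref{lem:interaction-locus-full-flag-parity}, over all splittings in $\Split(V^\bullet; d_1, d_2)$ this difference takes values of the same parity as $d_1 d_2 = \dim \Fr(\cE_1) \cdot \dim \Fr(\cE_2)$. As in the proof of Corollary~\ref{cor:sheaves-wall-spin}, replacing the framing functor $\Fr$ used in Joyce's construction of the auxiliary category $\tilde{\cat{A}}^{\Fr}$ by $\Fr^{\oplus 2}$ forces both $\dim \Fr(\cE_i)$ to be even, so $d_1 d_2$ is always even and the right hand side is $\equiv 0 \bmod{2}$.

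Combining the two congruences, the hypothesis of Theorem~\ref{thm:DT-wall}\ref{thm:DT-wall:it:2} is satisfied for $N = 2$. The main (and only) conceptual point is checking that the replacement $\Fr \leadsto \Fr^{\oplus 2}$ is compatible with the construction of Joyce's master space used in the proof of Theorem~\ref{thm:DT-wall}, so that the resulting symmetrized pullback obstruction theory \eqref{eq:symmetrized-pullback-Tvir} is unchanged in form and the properness argument of \S\ref{sec:master-space-Tw-proper} still applies; this is automatic because $\Fr^{\oplus 2}$ is still an injective exact framing functor in the sense of \eqref{eq:framing-functor}.
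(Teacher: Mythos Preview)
Your proof is correct and follows essentially the same approach as the paper: apply Lemma~\ref{lem:VW-Ext-parity} together with Lemma~\ref{lem:surface-Ext-parity}\ref{lem:surface-Ext-parity:spin} to get $\dim\Ext_Y(\cE_1,\cE_2)\equiv 0\bmod 2$, and then double the framing functor exactly as in Corollary~\ref{cor:sheaves-wall-spin} to force the quiver contribution to be even. You have simply spelled out the intermediate steps that the paper's two-sentence proof leaves implicit.
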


This is the direct analogue of Corollary~\ref{cor:sheaves-wall-spin}.

\begin{proof}
  By Lemma~\ref{lem:VW-Ext-parity} and
  Lemma~\ref{lem:surface-Ext-parity:spin}, $\dim \Ext_Y(\cE_1, \cE_2)
  \equiv 0 \bmod{2}$. By the same argument as in the proof of
  Corollary~\ref{cor:sheaves-wall-spin}, the framing functor may be
  doubled so that $\dim \Ext_Q(V_1^\bullet, V_2^\bullet) - \dim
  \Ext_Q(V_2^\bullet, V_1^\bullet) \equiv 0 \bmod{2}$ as well.
\end{proof}

\subsection{}

Finally, in the VW setting, we can simplify the properness
assumption~\ref{it:DT-moduli-stack-Ty-proper}.

\begin{lemma}\label{lem:VW-poles}
  Suppose that $\cK_S\big|_{S^\sT}$ has non-trivial $\sT$-weight on
  each component. Then assumption~\ref{it:DT-moduli-stack-Ty-proper}
  is satisfied.
\end{lemma}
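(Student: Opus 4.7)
The plan is to identify the $\sT_y$-fixed locus via the spectral construction and to show that the weight hypothesis forces sheaves to have set-theoretic support on the zero section, from which properness follows.

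First I would check that $\sT_y = \sT$: since $y$ is trivial on the $\sT$ factor of $\tilde\sT = \sT \times \bC^\times$ and non-trivial on the $\bC^\times$ factor, the maximal subtorus of $\ker(y)$ is $\sT$. Thus a point $[\cE] \in \fN^\sst_\alpha(\sigma)^{\sT_y}$ corresponds to a $\sT$-equivariant compactly supported sheaf on $Y = \tot(\cK_S)$, equivalently (by the spectral construction of Lemma~\ref{lem:VW-Ext-parity}) a $\sT$-equivariant Higgs pair $(\bar\cE, \phi)$ on $S$ with $\phi\colon \bar\cE \to \bar\cE \otimes \cK_S$.

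The main step is to show that any such $\cE$ has set-theoretic support in the zero section $S \subset Y$. If $\supp(\cE)$ contained some $(p, v)$ with $v \neq 0$, then by equivariance the orbit closure $\overline{\sT \cdot (p, v)}$ would lie in $\supp(\cE)$ and hence be compact in $Y$. For $p \in S^\sT$, the hypothesis $\kappa_p \neq 0$ means $\sT$ acts on the fiber $\cK_S|_p \cong \mathbb{A}^1$ with non-trivial weight, so the orbit of $v \neq 0$ fills $\mathbb{A}^1 \setminus \{0\}$ with non-compact closure $\mathbb{A}^1$, a contradiction. For $p \notin S^\sT$, I would pick a one-parameter subgroup $\lambda \in \cochar(\sT)$ whose attracting endpoint $p_0 = \lim_{t \to 0} \lambda(t) p$ is a fixed point with $\lambda(\kappa_{p_0}) > 0$; in local coordinates at $p_0$ the fiber coordinate scales as $t^{-\lambda(\kappa_{p_0})}$ and hence blows up as $t \to 0$, again contradicting compactness. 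The existence of such $\lambda$ uses both that $\kappa_{p_0} \neq 0$ in $\Char(\sT)$ and that one may freely change the sign of $\lambda$ or select among BB chambers attracting to different fixed points in $\overline{\sT \cdot p}$; for $\sT = \bC^\times$ this reduces to choosing the sign of the integer $\lambda(\kappa_{p_0})$, and the higher-rank case follows by restricting to a generic $\bC^\times \hookrightarrow \sT$.

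With $\supp(\cE) \subset S$, the scheme-theoretic support is contained in a uniform infinitesimal thickening of $S$ inside $Y$ determined by $\alpha$, which is a proper closed subscheme. Standard moduli theory then gives that $\fN^\sst_\alpha(\sigma)^{\sT_y}$ is bounded and closed, hence proper. The main obstacle is the sign analysis at saddle fixed points, where the attracting BB cell of $p_0$ under a candidate $\lambda$ is a proper subspace of the tangent space and $\lambda(\kappa_{p_0}) = \sum \lambda(a_i)$ can a priori have any sign; one must use that as $\lambda$ varies within the closure of the BB cone of $p_0$, or as one switches to the BB cone of a different fixed point in $\overline{\sT \cdot p}$, the hypothesis $\kappa \neq 0$ at every fixed component eventually produces a $\lambda$ making the attracting-endpoint fiber blow up.
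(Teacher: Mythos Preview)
Your first step ($\sT_y = \sT$) matches the paper's, but after that the two arguments diverge substantially. The paper's proof is much shorter: it only observes that the hypothesis forces $Y^\sT = S^\sT \subset S$, so $Y^\sT$ is proper, and then invokes the compactification argument of \S\ref{sec:master-space-Tw-proper} verbatim --- pick any $\tilde\sT$-equivariant compactification $\bar Y$, note that the corresponding moduli on $\bar Y$ is proper for standard reasons, and identify the $\sT$-fixed locus of $\fN_\alpha^\sst(\sigma)$ as a closed subspace there. The paper never needs the stronger statement that every $\sT$-fixed sheaf is set-theoretically supported on the zero section.

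You instead attempt to prove exactly that stronger statement $\supp(\cE)\subset S$ by an orbit-by-orbit analysis. The case $p\in S^\sT$ is fine, but the case $p\notin S^\sT$ is where your argument has a genuine gap, which you yourself flag. Concretely: if the attracting endpoint $p_0$ for $\lambda$ is a saddle with tangent weights $(m,c)$, $m>0$ along the orbit and $c<-m$, then $\langle\lambda,\kappa_{p_0}\rangle=-(m+c)>0$ and the fiber coordinate \emph{converges} to $0$ as $t\to 0$ rather than blowing up. Your proposed fix --- vary $\lambda$ within a BB cone, or switch to a different fixed point in $\overline{\sT\cdot p}$ --- is asserted but not proved, and it is not evident that some choice always produces a divergent direction. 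This is not a detail one can wave away; it is the heart of the argument in your approach.

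There is also a second, smaller gap: even granting $\supp(\cE)\subset S$ set-theoretically, your final step (``uniform infinitesimal thickening determined by $\alpha$, hence bounded, hence proper'') would need justification. The paper's compactification route sidesteps both issues at once, which is what it buys.
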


\begin{proof}
  Since $\sT_y = \sT$, by hypothesis the $\sT_y$-fixed locus of $Y$
  lies within $S$ and is therefore proper. Then the $\sT_y$-fixed
  locus of $\fN_\alpha^{\sst}(\sigma)$ is also proper by the same
  argument as in \S\ref{sec:master-space-Tw-proper}: it is a closed
  subspace in the analogous moduli space for any choice of
  compactification $\bar Y$, which is proper for standard reasons
  \cite{Huybrechts2010}.
\end{proof}

\phantomsection
\addcontentsline{toc}{section}{References}

\begin{small}
\bibliographystyle{alpha}
\bibliography{EllRes}
\end{small}

\end{document}